\newtheorem{theorem}{Theorem}[section]
\newtheorem{lemma}[theorem]{Lemma}
\numberwithin{equation}{section}
\numberwithin{figure}{section}
\newcommand{\N}{\mathbb{N}}
\newcommand{\Z}{\mathbb{Z}}
\newcommand{\R}{\mathbb{R}}
\begin{document}

\title{The Neumann boundary condition \\
for the two-dimensional Lax-Wendroff scheme. II}

\author{Antoine {\sc Benoit}\thanks{Universit\'e du Littoral C\^ote d'Opale, Laboratoire de Math\'ematiques Pures et Appliqu\'ees, 
50 rue Ferdinand Buisson, CS 80699, 62228 Calais, France. Email: {\tt antoine.benoit@univ-littoral.fr}.} $\,$ \& Jean-Fran\c{c}ois 
{\sc Coulombel}\thanks{Institut de Math\'ematiques de Toulouse ; UMR5219, Universit\'e de Toulouse ; CNRS, F-31062 Toulouse 
Cedex 9, France. Email: {\tt jean-francois.coulombel@math.univ-toulouse.fr}. For the purpose of open access, the author will apply 
a CC-BY public copyright licence to any Author Accepted Manuscript version arising from this preliminary version.}}
\date{\today}
\maketitle

\begin{abstract}
We study the stability of a two-dimensional Lax-Wendroff scheme in a quarter-plane. Following our previous work \cite{BC1}, we aim 
here at adapting the energy method in order to study \emph{second order} extrapolation boundary conditions. We first show on the 
one-dimensional problem why modifying the energy is a necessity in order to obtain stability estimates. We then study the two-dimensional 
case and propose a modified energy as well as second order extrapolation boundary and corner conditions in order to maintain second 
order accuracy and stability of the whole scheme, including near the corner.
\end{abstract}

\noindent {\small {\bf AMS classification:} 65M12, 65M06, 65M20.}

\noindent {\small {\bf Keywords:} transport equations, numerical schemes, domains with corners, boundary conditions, stability.}
\bigskip

\noindent For the purpose of open access, this work is distributed under a Creative Commons Attribution | 4.0 International licence: 
\href{https://creativecommons.org/licenses/by/4.0/}{https://creativecommons.org/licenses/by/4.0/}.

\begin{flushleft}
\includegraphics{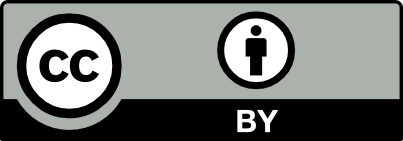}
\end{flushleft}

\paragraph{Notation.} For $d$ a positive integer and $\mathcal{J} \subset \Z^d$, we let $\ell^2(\mathcal{J};\R)$ denote the Hilbert space of 
real valued, square integrable sequences indexed by $\mathcal{J}$ and equipped with the norm:
$$
\forall \, u \in \ell^2(\mathcal{J};\R) \, ,\quad \| \, u \, \|_{\ell^2(\mathcal{J})}^2 \, := \, \sum_{j \in \mathcal{J}} \, u_j^2 \, .
$$
The corresponding scalar product is denoted $\langle \, \, ; \, \, \rangle_{\ell^2(\mathcal{J})}$. We mainly focus below on the case $d=2$ 
but we shall also encounter the case $d=1$.

\section{Introduction}
\label{section1}

This article is a follow-up of our previous work \cite{BC1} where we have studied the so-called Lax-Wendroff scheme with a stabilizer in two 
space dimensions. This scheme was proposed in \cite{laxwendroff} to approximate solutions to symmetric hyperbolic systems. Previous 
stability studies for this scheme were based on Fourier analysis and therefore dealt with problems that were defined on the whole space or 
that considered periodic boundary conditions. In \cite{BC1}, we have shown that the \emph{energy method} was a successful technique for 
dealing with $\ell^2$-stability of the Lax-Wendroff scheme in two space dimensions. The energy method bypasses Fourier analysis and is 
therefore interesting if one wishes to deal with problems with boundary conditions. In \cite{BC1}, we were able to recover the optimal stability 
criterion in the whole space (the so-called Courant-Friedrichs-Lewy condition) and also to study \emph{first order} extrapolation boundary 
conditions for an outflow in the half-plane and in the quarter-plane. For the latter case, the analysis requires specifying an extrapolation 
condition at the corner which, up to our knowledge, was new.

Since the Lax-Wendroff scheme gives, at least formally, second order approximations of solutions to symmetric hyperbolic systems, first order 
extrapolation at the boundary might deteriorate the overall accuracy of the scheme. We thus aim here at studying \emph{second order} extrapolation 
boundary conditions in the outflow case, that is when the transport operator does not come with any boundary condition in the continuous setting. 
As evidenced in the one-dimensional case (see Section \ref{section2} below), the classical energy method does not predict stability for second 
order extrapolation, at least not in a straightforward way. Modifying the energy near the boundary is necessary to obtain stability estimates by 
energy arguments, and this is probably one of the very first examples of discrete summation by parts operators (see, e.g., 
\cite{Strand1,Strand2,Mattsson} and subsequent works). Let us note that stability estimates for any order of extrapolation at the boundary could 
also be derived through the more complete, though elaborate, GKS analysis, see e.g. \cite{goldberg,goldberg-tadmor}, but we wish to bypass 
this theory in order, for instance, to cover problems in a quarter-plane for which an analogous theory is still lacking. Our goal here is therefore 
to extend the procedure of devising a suitable energy functional for the Lax-Wendroff scheme to \emph{second order extrapolation} boundary 
conditions in two space dimensions. This is, to some extent, a prototype example for a ``high order'' boundary treatment in several space 
dimensions with a corner in the space domain, and we shall already see that the algebra becomes rather involved.

The plan of the article is as follows. In Section \ref{section2}, we introduce and quickly analyze a one-dimensional problem in order to motivate 
the necessity of modifying the energy functional to deal with second order extrapolation. Section \ref{section3} is the core of this article. We 
introduce the two-dimensional Lax-Wendroff scheme and the associated extrapolation conditions in a quarter-plane. We then state and prove 
our main result, namely Theorem \ref{thm1} below. The general methodology is the same as in \cite{BC1} so we shall feel free at some places 
to shorten the details and refer to this companion article. At last Section \ref{part-num} includes some numerical simulations and a discussion enlightening the theoretical result and more specifically the assumptions made to obtain such a result.

\section{The one-dimensional problem}
\label{section2}

This section being mostly a presentation of a motivating example, we feel free not to make the functional framework precise and keep the calculations 
at a rather formal level. We consider the outgoing transport equation in one space dimension:
\begin{equation}
\label{hcl-1d}
\begin{cases}
\partial_t u \, + \, a \, \partial_x u \, = \, 0 \, ,& t \ge 0 \, ,\, x \ge 0 \, ,\\
u_{|_{t=0}} \, = \, u_0 \, ,& 
\end{cases}
\end{equation}
where $a$ is a fixed \emph{negative} number, which explains why we do not consider any boundary condition on $\{ x=0 \}$. The unknown function 
$u$ in \eqref{hcl-1d} is assumed to be real valued. We consider a space step $\Delta x>0$ and a time step $\Delta t>0$; we then denote $\lambda 
:= \Delta t/\Delta x$ the so-called Courant-Friedrichs-Lewy (CFL in what follows) number. We then approximate the solution to \eqref{hcl-1d} by the 
Lax-Wendroff scheme:
\begin{equation}
\label{LW-1d}
u_j^{n+1} \, = \, u_j^n \, - \, \dfrac{\lambda \, a}{2} \, (u_{j+1}^n-u_{j-1}^n) \, + \, \dfrac{(\lambda \, a)^2}{2} \, (u_{j+1}^n-2\, u_j^n+u_{j-1}^n) \, ,\quad 
n \in \N \, ,\quad j \in \N \, ,
\end{equation}
with the initial condition:
$$
\forall \, j \in \N \, ,\quad u_j^0 \, := \, \dfrac{1}{\Delta x} \, \int_{j \, \Delta x}^{(j+1) \, \Delta x} u_0(y) \, {\rm d}y \, .
$$
In \eqref{LW-1d}, $u_j^n$ is meant to be an approximation of the solution $u$ to \eqref{hcl-1d} in the cell $[n \, \Delta t,(n+1) \, \Delta t) \times 
[j \, \Delta x,(j+1) \, \Delta x)$ for any $(n,j) \in \N \times \N$. The iteration \eqref{LW-1d} requires the knowledge of $u_{-1}^n$ in order to determine 
$u_0^{n+1}$. For $n\in\mathbb{N}$, the cell $[n\,\Delta t \, ,\,(n+1)\,\Delta t )\times (-\Delta x \, , \, 0)$ that corresponds to the index $j=-1$ is referred 
to below as a \emph{ghost cell} since it lies outside of the physical domain $\R_x^+$. We consider here a \emph{second order extrapolation} procedure 
in order to maintain, at least formally, second order accuracy of the whole numerical procedure up to the boundary:
\begin{equation}
\label{extrapolation-1d}
u_{-1}^n \, = \, 2 \, u_0^n -u_1^n \, ,\quad n \in \N \, .
\end{equation}
The scheme \eqref{LW-1d}, \eqref{extrapolation-1d} then determines the sequence $(u_j^n)_{j \in \N}$ inductively with respect to $n$.

Multiplying the interior equation of \eqref{hcl-1d} by $u$ and integrating with respect to $x$ over $\R^+$, we obtain the energy inequality:
\begin{equation}
\label{energy-1d}
\dfrac{{\rm d}}{{\rm d}t} \int_{\R^+} u(t,x)^2 \, {\rm d}x \, = \, a \, u(t,0)^2 \, \le \, 0 \, .
\end{equation}
We aim here at understanding whether the numerical scheme \eqref{LW-1d}, \eqref{extrapolation-1d} satisfies an analogous energy balance law 
at the discrete level. The calculations below can already be found in \cite{CLundquist} but we reproduce them briefly for the sake of completeness.

We start from the following decomposition that is a direct consequence\footnote{Such decompositions that incorporate dissipation and telescopic terms 
are derived and used in a systematic way in \cite{jfcfl} to which we refer for more details.} of \eqref{LW-1d}:
\begin{align*}
\forall \, j \in \N \, ,\quad (u_j^{n+1})^2 \, - \, (u_j^n)^2 \, =& \, -\dfrac{(\lambda \, a)^2 \, (1-(\lambda \, a)^2)}{4} \, (u_{j+1}^n-2\, u_j^n+u_{j-1}^n)^2 \\
& +\lambda \, a \, (u_{j-1}^n \, u_j^n -u_j^n \, u_{j+1}^n) \, + \, \dfrac{(\lambda \, a)^2}{2} \, \Big( (u_{j-1}^n)^2 -2 \, (u_j^n)^2 +(u_{j+1}^n)^2 \Big) \\
& + \, \dfrac{(\lambda \, a)^3}{2} \, \Big( (u_j^n-u_{j-1}^n)^2-(u_{j+1}^n-u_j^n)^2 \Big) \, .
\end{align*}
The first term on the right-hand side corresponds to the dissipation of the Lax-Wendroff scheme while the second and third lines are telescopic with 
respect to $j$ (they would not contribute if we would sum over $\Z$). We now sum with respect to $j \in \N$ and then use the boundary condition 
\eqref{extrapolation-1d} to obtain:
\begin{align}
\forall \, n \in \N \, ,\quad \sum_{j \in \N} \, (u_j^{n+1})^2 \, - \, \sum_{j \in \N} \, (u_j^n)^2 \, =& \, 
-\dfrac{(\lambda \, a)^2 \, (1-(\lambda \, a)^2)}{4} \, \sum_{j \in \N} \, (u_{j+1}^n-2\, u_j^n+u_{j-1}^n)^2 \label{energy-discret-1d} \\
& +\dfrac{\lambda \, a-1}{2} \, (u_0^n)^2 \, + \, \dfrac{1+\lambda \, a}{2} \, \Big( u_0^n -\lambda \, a \, (u_1^n-u_0^n) \Big)^2 \, .\notag
\end{align}
We assume that the space and time steps are chosen in such a way that the stability condition $\lambda \, |a| \in (0,1)$ holds and we recall that $a$ 
is negative. In that case, the right-hand side in \eqref{energy-discret-1d} first incorporates a non-positive term that corresponds to the interior dissipation 
of the Lax-Wendroff scheme. It also incorporates, in the second line of \eqref{energy-discret-1d}, a \emph{boundary term} that is a \emph{quadratic form} 
with respect to $(u_0^n,u_1^n)$. This boundary term mimics the right-hand side of \eqref{energy-1d} since the discrete normal derivative $u_1^n-u_0^n$ 
is meant to be small for smooth solutions and $u_0^n$ is meant to be close to $u(n\, \Delta t,0)$.

Unfortunately, the above quadratic form on the right-hand side of \eqref{energy-discret-1d} is not negative definite since $\lambda \, a-1$ is negative 
but $1+\lambda \, a$ is positive. The energy argument thus does not predict stability, at least not in this straightforward way. However, it can easily be 
modified to obtain a positive conclusion to the stability problem of the scheme \eqref{LW-1d}, \eqref{extrapolation-1d}. The idea, following \cite{Strand1} 
and many subsequent works, is to modify the energy functional close to the numerical boundary. Namely, a direct adaptation of the above energy 
argument gives the identity:
\begin{align*}
\dfrac{1}{2} \, (u_0^{n+1})^2 \, + \, \sum_{j \ge 1} \, (u_j^{n+1})^2 \, - \, \dfrac{1}{2} \, (u_0^n)^2 \, - \, \sum_{j \ge 1} \, (u_j^n)^2 
\, =& \, -\dfrac{(\lambda \, a)^2 \, (1-(\lambda \, a)^2)}{4} \, \sum_{j \ge 1} \, (u_{j+1}^n-2\, u_j^n+u_{j-1}^n)^2 \\
& +\dfrac{\lambda \, a}{2} \, (u_0^n)^2 \, + \, \dfrac{\lambda \, a}{2} \, \Big( u_0^n -\lambda \, a \, (u_1^n-u_0^n) \Big)^2 \, ,
\end{align*}
where the interior dissipation term is unchanged but the boundary term is now a negative definite quadratic form of $(u_0^n,u_1^n)$ since $a$ is 
negative and $\lambda$ is a positive number. In particular, under the CFL condition $\lambda \, |a| \in (0,1)$, the energy:
$$
\dfrac{1}{2} \, (u_0^n)^2 \, + \, \sum_{j \ge 1} \, (u_j^n)^2 
$$
is non-increasing with respect to the time index $n$ for any solution to \eqref{LW-1d}, \eqref{extrapolation-1d} with square integrable initial condition. 
The note \cite{CLundquist} explains where the $1/2$ coefficient comes from (other coefficients close to $1/2$ could be chosen).
\bigskip

We aim below at extending such a modified energy technique to the two-dimensional case with second order extrapolation conditions, which would be 
an extension of \eqref{extrapolation-1d}. In \cite{BC1}, we have considered first the half-plane geometry and then the quarter-plane in order to present 
the associated algebra with slowly increasing difficulty. Since our main motivation is to investigate boundary conditions in regions with corners, we only 
deal here with the quarter-plane and leave the case of the half-plane to the interested reader.

\section{The two-dimensional problem}
\label{section3}

\subsection{The main result}

We consider from now on the two-dimensional transport equation in the quarter-plane $\R^+ \times \R^+$:
\begin{equation}
\label{hcl}
\begin{cases}
\partial_t u \, + \, a \, \partial_x u \, + \, b \, \partial_y u \, = \, 0 \, ,& t \ge 0 \, ,\, (x,y) \in \R^+ \times \R^+ \, ,\\
u_{|_{t=0}} \, = \, u_0 \, ,& 
\end{cases}
\end{equation}
where $a,b$ are some given real \emph{negative} numbers. The initial condition $u_0$ in \eqref{hcl} belongs to the Lebesgue space $L^2 
(\R^+ \times \R^+;\R)$. We consider below a finite difference approximation of \eqref{hcl} that is defined as follows. Given some space steps 
$\Delta x, \Delta y >0$ in each spatial direction, and given a time step $\Delta t>0$, we introduce the ratios $\lambda := \Delta t/\Delta x$ and 
$\mu :=\Delta t/\Delta y$. In all what follows, the ratios $\lambda$ and $\mu$ are assumed to be fixed, meaning that they are given \textit{a priori} of 
the computations and are meant to be tuned in order to satisfy some stability requirements (the so-called Courant-Friedrichs-Lewy condition 
\cite{cfl}, later on referred to as the CFL condition). The solution $u$ to \eqref{hcl} is then approximated on the time-space domain 
$[n \, \Delta t,(n+1) \, \Delta t) \times [j \, \Delta x,(j+1) \, \Delta x) \times [k \, \Delta y,(k+1) \, \Delta y)$ by a real number $u_{j,k}^n$ for 
any $n \in \N$ and $(j,k) \in \N^2$. The discrete initial condition $u^0$ is defined for instance by taking the piecewise constant projection 
of $u_0$ in \eqref{hcl} on each cell, that is (see \cite{gko}):
$$
\forall \, (j,k) \in \N^2 \, ,\quad u^0_{j,k} \, := \, \dfrac{1}{\Delta x \, \Delta y} \, \int_{j \, \Delta x}^{(j+1) \, \Delta x} \, \int_{k \, \Delta y}^{(k+1) \, \Delta y} 
\, u_0(x,y) \, {\rm d}x \, {\rm d}y \, .
$$
This discrete initial condition satisfies:
$$
\sum_{(j,k) \in \N^2} \, \Delta x \, \Delta y \, (u^0_{j,k})^2 \, \le \, \| \, u_0 \, \|_{L^2(\R^+ \times \R^+)}^2 \, .
$$
It then remains to determine the $u_{j,k}^n$'s inductively with respect to $n$. The Lax-Wendroff scheme with a stabilizer reads (see \cite{laxwendroff}):
\begin{align}
u_{j,k}^{n+1} \, = \, u_{j,k}^n 
& - \, \dfrac{\lambda \, a}{2} \, \big( u_{j+1,k}^n-u_{j-1,k}^n \big) \, - \, \dfrac{\mu \, b}{2} \, \big( u_{j,k+1}^n-u_{j,k-1}^n \big) \notag \\
& + \, \dfrac{(\lambda \, a)^2}{2} \, \big( u_{j+1,k}^n-2\, u_{j,k}^n+u_{j-1,k}^n \big) \, + \, 
\dfrac{(\mu \, b)^2}{2} \, \big( u_{j,k+1}^n-2\, u_{j,k}^n+u_{j,k-1}^n \big) \notag \\
& + \, \dfrac{\lambda \, a \, \mu \, b}{4} \, \big( u_{j+1,k+1}^n-u_{j+1,k-1}^n-u_{j-1,k+1}^n+u_{j-1,k-1}^n \big) \label{LW} \\
& - \, \dfrac{(\lambda \, a)^2 + (\mu \, b)^2}{8} \, \big( u_{j+1,k+1}^n-2\, u_{j+1,k}^n+u_{j+1,k-1}^n \notag \\
& \qquad \qquad \qquad \qquad - \, 2\, u_{j,k+1}^n+4\, u_{j,k}^n-2\, u_{j,k-1}^n+u_{j-1,k+1}^n-2\, u_{j-1,k}^n+u_{j-1,k-1}^n \big) \, ,\notag
\end{align}
where $(j,k)$ belongs to $\N^2$. We refer to \cite{laxwendroff,gko} for alternative approximations of \eqref{hcl}.

Since the computation of $u_{j,k}^{n+1}$ requires the knowledge of all closest neighboring cell values $u_{j+j',k+k'}^n$, with $j',k' \in \{ -1,0,1 \}$, 
we need to prescribe the values of the discrete solution $u^n$ in the \emph{ghost cells}, which correspond to the values $u_{-1,\ell}^n$ and 
$u_{\ell,-1}^n$ with $\ell \in \N$, and to the value $u_{-1,-1}^n$. These ghost cells are depicted in red and green in Figure \ref{fig:maillage}. 
The interior cells are depicted in blue.

\begin{figure}[htbp]
\begin{center}
\begin{tikzpicture}[scale=1.5,>=latex]
\draw [ultra thin, dotted, fill=blue!20] (-3,-1.5) rectangle (3.5,2);
\draw [ultra thin, dotted, fill=red!20] (-3.5,-1.5) rectangle (-3,2);
\draw [ultra thin, dotted, fill=red!20] (-3,-2) rectangle (3.5,-1.5);
\draw [ultra thin, dotted, fill=green!20] (-3.5,-2) rectangle (-3,-1.5);
\draw [thin, dashed] (-3.5,-2) grid [step=0.5] (3.5,2);
\draw[thick,black,->] (-4,-1.5) -- (4,-1.5) node[below] {$x$};
\draw[thick,black,->] (-3,-2.2)--(-3,2.5) node[right] {$y$};
\draw (-3.7,-2.12) node[right]{$-\Delta y$};
\draw (-3.5,-0.85) node[right]{$\Delta y$};
\draw (-3.7,1.15) node[right, fill=red!20]{$k \Delta y$};
\draw [thin, dashed] (-3.5,1) grid [step=0.5] (-3,1.5);
\draw (-3.6,-1.68) node{$-\Delta x$};
\draw (-3.18,-1.68) node[right]{$0$};
\draw (-2.6,-1.68) node{$\Delta x$};
\draw (2.35,-1.68) node{$j \Delta x$};
\node (centre) at (-3.5,-1.5){$\times$};
\node (centre) at (-3,-2){$\times$};
\node (centre) at (-3,-1.5){$\times$};
\node (centre) at (-2.5,-1.5){$\times$};
\node (centre) at (2.5,-1.5){$\times$};
\node (centre) at (-3,1){$\times$};
\node (centre) at (-3,-1){$\times$};
\draw (2.77,1.25) node{$u_{j,k}^n$};
\end{tikzpicture}
\caption{The spatial grid for the quarter-plane. Interior cells appear in blue, the boundary ghost cells appear in red and the corner ghost cell 
appears in green. The value $u_{j,k}^n$ corresponds to the approximation in the cell $[n\,\Delta t \, , \, (n+1)\, \Delta t) \times 
[j \Delta x,(j+1) \Delta x) \times [k \Delta y,(k+1) \Delta y)$.}
\label{fig:maillage}
\end{center}
\end{figure}
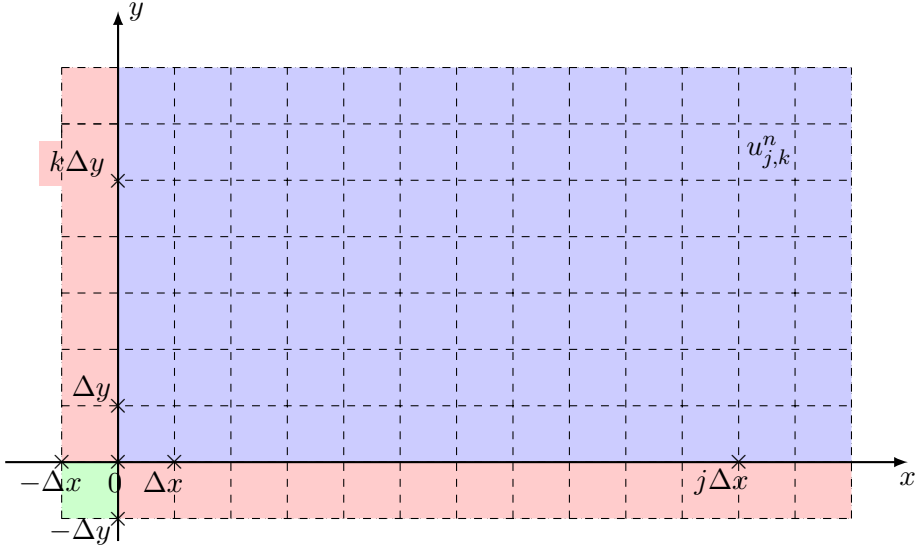

Extending the above one-dimensional analysis, we will impose second order extrapolation boundary conditions:
\begin{subequations}
\label{extrapolation}
\begin{align}
\label{extrapolationk}
\forall \, n \in \N \, ,\quad \forall \, k \in \N \, ,\quad u_{-1,k}^n \, &= \, 2 \, u_{0,k}^n \, - \, u_{1,k}^n \, ,\\
\label{extrapolationj}
\forall \, n \in \N \, ,\quad \forall \, j \in \N \, ,\quad u_{j,-1}^n \, &= \, 2 \, u_{j,0}^n \, - \, u_{j,1}^n \, ,
\end{align}
\end{subequations}
in conjunction with the numerical scheme \eqref{LW} for $(j,k) \in \N^2$ (that is, for interior values). It remains to define the corner cell value 
$u_{-1,-1}^n$. Following \cite{BC1} and trying, as in \eqref{extrapolation}, to have a symmetric treatment of both coordinates, we shall impose 
here the following procedure:
\begin{equation}
\label{corner}
\forall \, n \in \N \, ,\quad u_{-1,-1}^n \, = \, 4 \, u_{0,0}^n \, - \, 2 \, u_{1,0}^n  \, - \, 2 \, u_{0,1}^n  \, + \, u_{1,1}^n \, .
\end{equation}
Using \eqref{extrapolation}, this amounts equivalently to having:
$$
\dfrac{1}{2} \, \Big( u_{1,-1}^n -2\, u_{0,-1}^n +u_{-1,-1}^n \Big) \, + \, \dfrac{1}{2} \, \Big( u_{-1,1}^n -2\, u_{-1,0}^n +u_{-1,-1}^n \Big) \, = \, 0 \, .
$$
The equations \eqref{LW}, \eqref{extrapolation}, \eqref{corner} then define the sequence $(u_{j,k}^n)_{(j,k) \in \N^2}$ inductively with respect 
to $n \in \N$.

Our main result in this article is a stability estimate for solutions to \eqref{LW}, \eqref{extrapolation}, \eqref{corner}. For $u \in \ell^2(\N^2;\R)$, 
the standard norm is the one defined in the introduction of this article. However, it will be useful below to rely on the following \emph{equivalent} 
norm:
\begin{equation}
\label{def:norm}
\| \, u \, \|^2 \, := \, \sum_{j,k \ge 1} \, u_{j,k}^2 \, + \, \dfrac{1}{2} \, \sum_{k \ge 1} \, u_{0,k}^2 \, + \, \dfrac{1}{2} \, \sum_{j \ge 1} \, u_{j,0}^2 \, + \, 
\dfrac{1}{4} \, u_{0,0}^2 \, ,
\end{equation}
which is a two-dimensional analogue of the norm that we have shown to be useful in one space dimension (see Section \ref{section2}). 
The corresponding scalar product is denoted $\langle \, \, ; \, \, \rangle$ without referring to the space domain since it will be the underlying 
norm that we shall use from now on. Our main result is the following.

\begin{theorem}
\label{thm1}
Let $M>0$. Let the transport coefficients $a,b$ be negative, and let the associated CFL parameters $\lambda$, $\mu$ 
satisfy\footnote{A careful reading of the proof below shows that instead of \eqref{majorationCFL} one could assume the following bounds:
\begin{equation}\nonumber
\lambda \, |a| \, \le \, M \, \mu \, |b| \, \quad \text{and} \quad \mu \, |b| \, \le \, M' \, \lambda \, |a| \, ,
\end{equation}
with $M\neq M'$. We could thus obtain a non symmetric set (with respect to the first bisector) of admissible CFL parameters. The maximal radius 
$\varepsilon$ would depend on both $M$ and $M'$. We choose to expose the proof for $M=M'$ for the sake of simplicity.}:
\begin{equation}
\label{majorationCFL}
\lambda \, |a| \, \le \, M \, \mu \, |b| \, \quad \text{and} \quad \mu \, |b| \, \le \, M \, \lambda \, |a| \, .
\end{equation}
Then there exists some constant $\varepsilon>0$ that only depends on $M$, and there exists a numerical constant\footnote{We shall see for  
instance that $c=1/10$ is a suitable value but we have not tried to optimize the constant $c$ (nor $\varepsilon$).} $c>0$ such that, if $\lambda$, 
$\mu$ also satisfy:
$$
(\lambda \, a)^2 \, + \, (\mu \, b)^2 \, \le \, \varepsilon \, ,
$$
then for any $u^0 \in \ell^2(\N^2;\R)$, the solution to the numerical scheme \eqref{LW}, \eqref{extrapolation}, \eqref{corner} satisfies the energy 
estimate:
\begin{align*}
\|\, u^{n+1} \, \|^2 \, - \, \|\, u^n \, \|^2  \, &+ \, c \, (\lambda \, a)^2 \, \sum_{j,k \ge 1} \, (u_{j-1,k}^n-2\, u_{j,k}^n+u_{j+1,k}^n)^2 
\, + \, c \, (\mu \, b)^2 \, \sum_{j,k \ge 1} \, (u_{j,k-1}^n-2\, u_{j,k}^n+u_{j,k+1}^n)^2 \\
&+ \, c \, \lambda \, |a| \, \sum_{k \ge 0} \, (u_{0,k}^n)^2 \, + \, c \, \mu \, |b| \, \sum_{j \ge 0} \, (u_{j,0}^n)^2 \, \le \, 0 \, .
\end{align*}
\end{theorem}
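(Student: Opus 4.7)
My plan is to mimic the energy argument of \cite{BC1} but with the modified norm defined in \eqref{def:norm}, which assigns weight $1/2$ along each edge $\{j=0\}$ and $\{k=0\}$ and weight $1/4$ at the corner $(0,0)$. These weights are the natural two-dimensional analogue of the $1/2$ coefficient that worked in Section \ref{section2}, and their appearance in the summation-by-parts identities is precisely what converts the indefinite boundary quadratic forms of the straightforward energy into negative definite ones. Concretely, I would first derive a local algebraic identity expressing $(u_{j,k}^{n+1})^2 - (u_{j,k}^n)^2$ as the sum of a non-positive interior dissipation of size $(\lambda a)^2(1-(\lambda a)^2)$ and $(\mu b)^2(1-(\mu b)^2)$ times squared second differences in each direction, plus terms that are telescopic in $j$, telescopic in $k$, and ``mixed'' telescopic terms arising from the $\lambda a\, \mu b$ cross derivative and the symmetric $(\lambda a)^2 + (\mu b)^2$ stabilizer. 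This is the two-dimensional analogue of the identity displayed just before \eqref{energy-discret-1d}.

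I would then sum this identity against the weights prescribed by \eqref{def:norm}. Summation by parts in each direction produces boundary contributions localized on $\{j=0\}$, on $\{k=0\}$ and at the corner $(0,0)$, which I would organize into three groups: a purely $x$-boundary contribution depending only on $(u_{0,k}^n,u_{1,k}^n)_{k\ge 0}$, a purely $y$-boundary contribution depending only on $(u_{j,0}^n,u_{j,1}^n)_{j\ge 0}$, and a remaining corner contribution depending on the four values $u_{0,0}^n$, $u_{1,0}^n$, $u_{0,1}^n$, $u_{1,1}^n$. The extrapolation conditions \eqref{extrapolation} would be used to eliminate $u_{-1,k}^n$ and $u_{j,-1}^n$, and the corner condition \eqref{corner} to eliminate $u_{-1,-1}^n$. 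The key structural observation, analogous to the $1$D identity shown just after \eqref{energy-discret-1d}, is that the weight $1/2$ turns the indefinite combination $\tfrac{\lambda a -1}{2}(u_0^n)^2 + \tfrac{1+\lambda a}{2}(u_0^n - \lambda a(u_1^n-u_0^n))^2$ into the negative definite form $\tfrac{\lambda a}{2}(u_0^n)^2 + \tfrac{\lambda a}{2}(u_0^n - \lambda a(u_1^n-u_0^n))^2$, and similarly in the $y$ direction.

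Once the boundary terms are rewritten as quadratic forms in the relevant values, I would split each one into a leading piece coming from the pure 1D analysis of Section \ref{section2}, plus a perturbation carrying the factors $\lambda a\, \mu b$ and $(\lambda a)^2+(\mu b)^2$ inherited from the two-dimensional cross and stabilizer terms. The leading piece immediately provides the negative contributions of order $\lambda|a|(u_{0,k}^n)^2$ and $\mu|b|(u_{j,0}^n)^2$ required in the statement. The perturbative part is where the smallness hypothesis $(\lambda a)^2 + (\mu b)^2 \le \varepsilon$ enters: using \eqref{majorationCFL} to compare $\lambda|a|$ and $\mu|b|$, I would absorb it either into the above edge coercive terms (when the perturbation is of order $\lambda a\, \mu b$, hence comparable to $\lambda |a|$ up to $M$) or into the interior dissipation via discrete Cauchy-Schwarz applied to the single-cell second differences closest to the boundary. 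Splitting a small but fixed fraction of the interior dissipation off to absorb these perturbations is what ultimately pins down the numerical constant $c$ and forces $\varepsilon$ to be sufficiently small in terms of $M$.

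I expect the main obstacle to be the corner analysis: the residual quadratic form in $(u_{0,0}^n, u_{1,0}^n, u_{0,1}^n, u_{1,1}^n)$ produced by the symmetric stabilizer together with the corner rule \eqref{corner} is the most delicate piece, because all couplings of the form $\lambda a\, \mu b$ and $(\lambda a)^2 + (\mu b)^2$ concentrate there, the corresponding $4\times 4$ matrix is not diagonally dominant for general CFL pairs, and one has only the two edge coercive terms at $(0,0)$ (weighted by $1/4$) to play with. Showing that, for $\varepsilon$ small enough depending only on $M$, this corner contribution is dominated by a small fraction of the edge coercivity together with the interior dissipation is the step that both justifies the assumption $(\lambda a)^2+(\mu b)^2 \le \varepsilon$ in the statement and, combined with the two edge estimates, closes the proof.
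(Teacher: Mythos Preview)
Your plan is essentially the same as the paper's and correctly identifies the key structural points: the weighted norm \eqref{def:norm}, the organization into interior/edge/corner contributions, the role of \eqref{majorationCFL} in making $|\alpha|$ and $|\beta|$ comparable so that cross terms can be absorbed, and the fact that the corner quadratic form is the most delicate piece whose residual bad terms must ultimately be swallowed by the edge coercivity.

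Two minor discrepancies are worth flagging. First, rather than deriving a single pointwise identity for $(u_{j,k}^{n+1})^2-(u_{j,k}^n)^2$ and then summing, the paper writes $u^{n+1}=u^n-w^n+v^n$ with $v^n$ the first-order part and $w^n$ the second-order part (see \eqref{defvwn'}), expands $\|u^{n+1}\|^2-\|u^n\|^2$ as in \eqref{bilan-energie}, and computes the five scalar products $2\langle u^n;v^n\rangle$, $-2\langle v^n;w^n\rangle$, $\|v^n\|^2-2\langle u^n;w^n\rangle$, $\|w^n\|^2$ separately (Lemmas \ref{lem1}--\ref{lem3}); in two dimensions with the stabilizer this operator-level bookkeeping is considerably cleaner than chasing a pointwise identity. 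Second, the paper does \emph{not} use the interior dissipation to absorb boundary perturbations: the edge cross terms in $\mathcal{B}_1$, $\mathcal{B}_2$ are absorbed entirely by the edge coercive pieces $-|\beta|\sum_{j\ge 1}(u_{j,0}^n)^2$ and $-|\alpha|\sum_{k\ge 1}(u_{0,k}^n)^2$ (this is where \eqref{majorationCFL} and the smallness of $\varepsilon$ are repeatedly invoked), and the two leftover ``bad'' corner terms $(\alpha^2+\beta^2)\big((D_{1,+}u_{0,0}^n)^2+(D_{2,+}u_{0,0}^n)^2\big)$ are then absorbed by those same edge sums together with the corner coercivity $-\tfrac{|\alpha|+|\beta|}{4}(u_{0,0}^n)^2$. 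The interior dissipation is kept intact and only appears in the final estimate.
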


Figure \ref{fig:CFL} illustrates the set of CFL parameters for which we obtain the stability of the Lax-Wendroff scheme with second 
order boundary and corner extrapolation.

\begin{figure}
\begin{center}
\begin{tikzpicture}[scale=1.5,>=latex]
\draw[thick,black,->] (-0.5,0) -- (4,0) node[below] {$\vert  \, a\, \vert \, \lambda$};
\draw[thick,black,->] (0,-0.5)--(0,4) node[right] {$\vert \, b\,\vert \, \mu $};
\draw[thick, black, fill=blue!20] (3,0) arc (0:90:3cm);
\fill[blue!20] (0,0)--(3,0)--(0,3);
\fill[red!60,thick,domain=15:75] plot ({2.5*cos(\x)}, {2.5*sin(\x)});
\draw[black,thick,dotted,domain=0:15] plot ({2.5*cos(\x)}, {2.5*sin(\x)});
\draw (2.5,0) node[below] {$\varepsilon$};
\draw (3,0) node[below] {$\frac{1}{\sqrt{2}}$};
\fill[red!60] (0,0)--(2.5*0.966,2.5*0.259)--(2.5*0.259,2.5*0.966);
\draw[black, thick, dotted] (2.5*0.966,2.5*0.259)--(3.5*0.966,3.5*0.259);
\draw[black, thick, dotted] (2.5*0.259,2.5*0.966)--(3.5*0.259,3.5*0.966);
\draw (3.5*0.966,3.5*0.259) node[right] {$\vert\,b\,\vert \mu\,=\, M\,\vert \, a\,\vert \,\lambda  $};
\draw (3.5*0.259,3.5*0.966) node[right] {$\vert\,a\,\vert \lambda\,=\, M\,\vert \, b\,\vert \,\mu  $};
\end{tikzpicture}
\caption{An illustration of admissible CFL parameters. The red area corresponds to the CFL parameters for which Theorem \ref{thm1} holds, 
the blue one to the optimal set of parameters (for which stability for the Cauchy problem holds).}
\label{fig:CFL}
\end{center}
\end{figure}
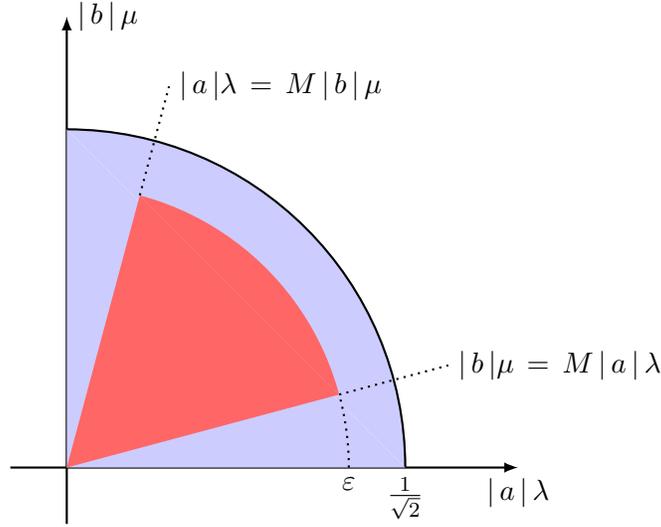

We remark that compared to \cite{BC1}, where we recovered the optimal set of parameters $(\vert \, a\,\vert\,\lambda)^2 \, + \, (\vert \, b\,\vert\,\mu)^2 
\, \leq \, \frac{1}{2}$ for the Cauchy problem and for first order extrapolation at the boundary, we now have some restrictions on the CFL parameters. 
These restrictions are of two types:
\begin{itemize}
\item A restriction of the maximal radius of the ball (the CFL parameters should be ``small enough'').
\item A restriction to a neighborhood of the first bisector (the CFL parameters should be ``comparable'').
\end{itemize}
These two restrictions are made in order to handle the much more involved algebra compared to \cite{BC1}. We do not claim that such restrictions 
are mandatory and maybe the energy method could be further refined in order to recover the maximal set of CFL parameters. Let us however indicate 
that, in our opinion, these restrictions are a little price to pay. Indeed reducing the maximal radius is not so restrictive and the second restriction (making 
the two ratios comparable) is rather natural from a practical point of view.

\subsection{Notation}

We follow the notation from \cite{BC1} and decompose the quantity $u_{j,k}^{n+1}$ in \eqref{LW} into three pieces:
$$
\forall \, (j,k) \in \N^2 \, ,\quad u_{j,k}^{n+1} \, = \, u_{j,k}^n \, - \, w_{j,k}^n \, + \, v_{j,k}^n \, ,
$$
where $v_{j,k}^n$ and $w_{j,k}^n$ are defined by:
\begin{subequations}
\label{defvwn}
\begin{align}
v_{j,k}^n \, :=& \, - \, \dfrac{\lambda \, a}{2} \, \big( u_{j+1,k}^n-u_{j-1,k}^n \big) \, - \, \dfrac{\mu \, b}{2} \, \big( u_{j,k+1}^n-u_{j,k-1}^n \big) \, ,\label{defvjkn} \\
w_{j,k}^n \, :=& \, - \, \dfrac{(\lambda \, a)^2}{2} \, \big( u_{j+1,k}^n-2\, u_{j,k}^n+u_{j-1,k}^n \big) \, - \, 
\dfrac{(\mu \, b)^2}{2} \, \big( u_{j,k+1}^n-2\, u_{j,k}^n+u_{j,k-1}^n \big) \notag \\
& - \, \dfrac{\lambda \, \mu \, a \, b}{4} \, \big( u_{j+1,k+1}^n-u_{j+1,k-1}^n-u_{j-1,k+1}^n+u_{j-1,k-1}^n \big) \label{defwjkn} \\
& + \, \dfrac{(\lambda \, a)^2 + (\mu \, b)^2}{8} \, \big( u_{j+1,k+1}^n-2\, u_{j+1,k}^n+u_{j+1,k-1}^n \notag \\
& \qquad \qquad \qquad \qquad - \, 2 \, u_{j,k+1}^n+4\, u_{j,k}^n-2\, u_{j,k-1}^n+u_{j-1,k+1}^n-2\, u_{j-1,k}^n+u_{j-1,k-1}^n \big) \, .\notag
\end{align}
\end{subequations}
We also use the shorthand notation $\alpha := \lambda \, a$ and $\beta := \mu \, b$. Both $\alpha$ and $\beta$ are negative real numbers.

The energy method in \cite{BC1} relies on symmetry or skew-symmetry properties of several finite difference operators. We thus introduce the 
following discrete first order partial derivatives and Laplacians:
\begin{align*}
&(D_{1,+}U)_{j,k} \, := \, U_{j+1,k} \, - \, U_{j,k} \, ,\quad (D_{1,-}U)_{j,k} \, := \, U_{j,k} \, - \, U_{j-1,k} \, ,\\
&(D_{2,+}U)_{j,k} \, := \, U_{j,k+1} \, - \, U_{j,k} \, ,\quad (D_{2,-}U)_{j,k} \, := \, U_{j,k} \, - \, U_{j,k-1} \, ,\\
&D_{1,0} \, := \, \dfrac{D_{1,+} +D_{1,-}}{2} \, ,\quad D_{2,0} \, := \, \dfrac{D_{2,+} +D_{2,-}}{2} \, ,\quad 
\Delta_1 \, := \, D_{1,+} \, D_{1,-} \, ,\quad \Delta_2 \, := \, D_{2,+} \, D_{2,-} \, .
\end{align*}
In order to keep the notation as simple as possible, we write below $D_{1,+}u_{j,k}$ rather than $(D_{1,+}u)_{j,k}$ and analogously for other operators. 
All above operators commute. Moreover the definitions allow us to rewrite \eqref{defvwn} as:
\begin{subequations}
\label{defvwn'}
\begin{align}
v^n \, :=& \, - \, \alpha \, D_{1,0} \, u^n \, - \, \beta \, D_{2,0} \, u^n \, ,\label{defvjkn'} \\
w^n \, :=& \, - \, 
\dfrac{\alpha^2}{2} \, \Delta_1 \, u^n \, - \, \dfrac{\beta^2}{2} \, \Delta_2 \, u^n \, - \, \alpha \, \beta \, D_{1,0} \, D_{2,0} \, u^n 
\, + \, \dfrac{\alpha^2 + \beta^2}{8} \, \Delta_1 \Delta_2 \, u^n \, .\label{defwjkn'}
\end{align}
\end{subequations}

Eventually, we follow the notation of \cite{BC1} and use the discrete set of indices $\mathbb{I} := \N^2$ for the \emph{interior} values of the numerical 
solution. It will be convenient below to use the notation $\mathring{\mathbb{I}} := \N^* \times \N^*$. Eventually, we denote $\mathbb{J} := (\{ -1 \} \cup \N)^2$ 
for the set of indices on which each sequence $u^n$ is defined (including the indices that correspond to the ghost cells). The underlying Hilbert space that 
corresponds to the norm in \eqref{def:norm} is the following set:
\begin{align*}
\mathbb{H} \, := \, \Big\{ u \in \ell^2(\mathbb{J};\R) \quad | \quad \forall \, k \in \N \, ,\quad u_{-1,k} \, &= \, 2 \, u_{0,k} \, - \, u_{1,k} \, , \\
\forall \, j \in \N \, ,\quad u_{j,-1} \, &= \, 2 \, u_{j,0} \, - \, u_{j,1} \, , \\
\text{\rm and } \quad u_{-1,-1} \, &= \, 4 \, u_{0,0} \, - \, 2 \, u_{1,0}  \, - \, 2 \, u_{0,1}  \, + \, u_{1,1} \, \Big\} \, .
\end{align*}
When equipped with the norm defined in \eqref{def:norm}, $\mathbb{H}$ becomes a Hilbert space, and the question we address in this article is mainly 
about understanding whether the numerical scheme defined by \eqref{LW} in $\mathbb{I}$, with $u^n \in \mathbb{H}$ for any $n \in \N$, yields a bounded 
sequence in $\mathbb{H}$. Let us observe that the sequences $v^n$ and $w^n$ in \eqref{defvwn'} are only defined on $\mathbb{I}$ and do not belong to 
$\mathbb{H}$. Nevertheless, we sometimes consider the norm in \eqref{def:norm} and its associated scalar product as acting on elements of $\mathbb{H}$ 
or on elements of $\ell^2(\mathbb{I};\R)$, see for instance Lemma \ref{lem1} below, since the norm in \eqref{def:norm} only involves those indices in 
$\mathbb{I}$ and do not involve the values in the ghost cells. We hope that this slight abuse will not create any confusion.

\subsection{Preliminary calculations}

The decomposition of $u^{n+1}$ gives the expression:
\begin{align}
\| \, u^{n+1} \, \|^2 \, - \, \| \, u^n \, \|^2 \, =& \, 2 \, \langle u^n ; v^n \rangle \, - \, 2 \,  \langle v^n ; w^n \rangle \notag \\
& \, + \, \| \, v^n \, \|^2 \, - \, 2 \,  \langle u^n ; w^n \rangle \, + \, \| \, w^n \, \|^2 \, .\label{bilan-energie}
\end{align}
The first two terms on the right-hand side are referred to below as the \emph{skew-symmetric} terms since they would not contribute on $\Z^2$ 
(see \cite{BC1}). The three other terms on the second line of the right-hand side are referred to below as the \emph{symmetric} terms. They will 
provide with the interior dissipation of the Lax-Wendroff scheme and will also give contributions both on the boundaries and at the corner.

We start with the expression of the two skew-symmetric terms.

\begin{lemma}
\label{lem1}
Let $a,b<0$ so that $\alpha,\, \beta <0$. Let $u^n \in \mathbb{H}$, and let the sequences $v^n,w^n$ be defined on the set of interior indices 
$\mathbb{I}$ by \eqref{defvwn'}. Then there holds:
\begin{subequations}
\label{termes-antisym1}
\begin{align}
2 \, \langle u^n;v^n\rangle \, =& - \, |\alpha| \, \sum_{k \ge 1} \, (u_{0,k}^n)^2 \, - \, |\beta| \, \sum_{j \ge 1} \, (u_{j,0}^n)^2 
\, - \, \dfrac{|\alpha|+|\beta|}{2} \, (u_{0,0}^n)^2 \, ,\label{lem1-antisym1} \\
- \, 2 \, \langle v^n ; w^n \rangle \, =& - \, \dfrac{|\alpha|^3}{2} \, \sum_{k \ge 1} (D_{1,+} u_{0,k}^n)^2 
\, - \, \dfrac{|\beta|^3}{2} \, \sum_{j \ge 1} (D_{2,+} u_{j,0}^n)^2 \notag \\
& - \, \dfrac{|\alpha| \, \beta^2}{2} \, \sum_{k \ge 1} (D_{2,+} u_{0,k}^n)^2 
\, - \, \dfrac{\alpha^2 \, |\beta|}{2} \, \sum_{j \ge 1} (D_{1,+} u_{j,0}^n)^2 \notag \\
& + \, \dfrac{|\alpha| \, \beta^2}{4} \, \sum_{k \ge 1} (\Delta_2 u_{0,k}^n)^2 
\, + \, \dfrac{\alpha^2 \, |\beta|}{4} \, \sum_{j \ge 1} (\Delta_1 u_{j,0}^n)^2 \notag \\
& - \, \alpha^2 \, |\beta| \, \sum_{k \ge 1} D_{2,0} u_{0,k}^n \, D_{1,+} u_{0,k}^n 
\, - \, |\alpha| \, \beta^2 \, \sum_{j \ge 1} D_{1,0} u_{j,0}^n \, D_{2,+} u_{j,0}^n \notag \\
& - \, |\alpha| \, \dfrac{\alpha^2+\beta^2}{8} \, \sum_{k \ge 1} (D_{1,+} D_{2,+}u_{0,k}^n)^2 
 \, - \,  |\beta| \, \dfrac{\alpha^2+\beta^2}{8} \, \sum_{j \ge 1} (D_{1,+} D_{2,+} u_{j,0}^n)^2 \label{lem1-antisym2} \\
& + \, |\beta| \, \dfrac{\alpha^2+\beta^2}{4} \, \sum_{k \ge 1} D_{2,0} u_{0,k}^n \, D_{1,+} \Delta_2 u_{0,k}^n 
\, + \, |\alpha| \, \dfrac{\alpha^2+\beta^2}{4} \, \sum_{j \ge 1} D_{1,0} u_{j,0}^n \, D_{2,+} \Delta_1 u_{j,0}^n \notag \\
& - \, \dfrac{|\alpha|^3}{4} \, (D_{1,+} u_{0,0}^n)^2 \, - \, \dfrac{|\beta|^3}{4} \, (D_{2,+} u_{0,0}^n)^2 
\, - \, (|\alpha|+|\beta|) \, \dfrac{\alpha^2+\beta^2}{8} \, (D_{1,+} D_{2,+} u_{0,0}^n)^2 \notag \\
& - \, \dfrac{\alpha^2 \, |\beta|}{2} \, (D_{1,+} u_{0,0}^n)^2 \, - \, \dfrac{|\alpha| \, \beta^2}{2} \, (D_{2,+} u_{0,0}^n)^2 
\, - \, \dfrac{|\alpha \, \beta|}{2} \, (|\alpha|+|\beta|) \, D_{1,+} u_{0,0}^n \, D_{2,+} u_{0,0}^n \, .\notag
\end{align}
\end{subequations}
\end{lemma}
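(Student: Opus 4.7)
The plan is to prove both identities by systematic discrete summation by parts in the weighted scalar product $\langle \, \cdot \, ; \, \cdot \, \rangle$. The weights $1,\tfrac12,\tfrac14$ in \eqref{def:norm} are engineered so that $D_{1,0}$ and $D_{2,0}$ become skew-adjoint modulo traces on the two axes. Throughout, I will invoke three consequences of the membership $u^n \in \mathbb{H}$: the extrapolation condition \eqref{extrapolationk} is equivalent to $\Delta_1 u^n_{0,k}=0$ for every $k \ge 0$; symmetrically \eqref{extrapolationj} amounts to $\Delta_2 u^n_{j,0}=0$ for every $j \ge 0$; and the corner condition \eqref{corner} is equivalent to $\Delta_1 u^n_{0,-1}+\Delta_2 u^n_{-1,0}=0$, as stated directly after \eqref{corner}. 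These three identities are what allows us to replace ghost values by traces of $u^n$ on the physical grid.

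For \eqref{lem1-antisym1} I would compute $\langle u^n \, ; \, D_{1,0} u^n \rangle$ by splitting the double sum along $k=0$ and $k\ge1$, shifting indices in $\sum u^n_{j,k} u^n_{j-1,k}$, and cancelling the telescoping bulk contributions against the weight-$\tfrac12$ boundary term $\tfrac12 u^n_{0,k}(u^n_{1,k}-u^n_{-1,k})$. Substituting $u^n_{-1,k}=2 u^n_{0,k}-u^n_{1,k}$ collapses each cross product into a square $-(u^n_{0,k})^2$, while the row $k=0$ with its weight-$\tfrac14$ corner cell contributes an extra $-\tfrac14 (u^n_{0,0})^2$. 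Multiplying by $-2\alpha$ and adding the analogous $D_{2,0}$ computation yields \eqref{lem1-antisym1}.

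The identity \eqref{lem1-antisym2} follows along the same lines but is substantially longer because $w^n$ is a combination of four operators. I would expand $\langle v^n \, ; \, w^n \rangle$ into the eight cross products pairing $\{D_{1,0}u^n,\,D_{2,0}u^n\}$ with $\{\Delta_1 u^n,\,\Delta_2 u^n,\,D_{1,0}D_{2,0} u^n,\,\Delta_1\Delta_2 u^n\}$, and perform summation by parts on each. The diagonal pairings $\langle D_{i,0} u^n \, ; \, \Delta_i u^n \rangle$ produce the cubic traces $|\alpha|^3 (D_{1,+} u^n_{0,k})^2$ and $|\beta|^3 (D_{2,+} u^n_{j,0})^2$; the off-diagonal pairings $\langle D_{i,0} u^n \, ; \, \Delta_{3-i} u^n \rangle$ generate the $(D_{2,+} u^n_{0,k})^2$ and $(\Delta_2 u^n_{0,k})^2$ traces together with their $j\leftrightarrow k$ counterparts; the block $\langle D_{i,0} u^n \, ; \, D_{1,0}D_{2,0} u^n\rangle$ contributes the cross products $D_{2,0}u^n_{0,k}\,D_{1,+}u^n_{0,k}$ (and symmetric); and the highest-order block $\langle D_{i,0} u^n \, ; \, \Delta_1\Delta_2 u^n\rangle$ yields the $(D_{1,+}D_{2,+}u^n_{0,k})^2$ traces together with the remaining cross terms that appear in the middle lines of \eqref{lem1-antisym2}. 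The boundary identities $\Delta_1 u^n_{0,k}=0$ and $\Delta_2 u^n_{j,0}=0$ are applied at each stage to discard the unwanted ghost contributions along the two axes.

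The main obstacle is the bookkeeping at the corner $(0,0)$: each of the summations by parts above leaves a residual term at $(0,0)$, weighted by $\tfrac14$, coming both from the row $k=0$ and from the column $j=0$. These residuals must be collected and reduced using the corner relation $\Delta_1 u^n_{0,-1}+\Delta_2 u^n_{-1,0}=0$, together with the extrapolation conditions applied in the two adjacent boundary cells, so that all corner contributions can be re-expressed in the basis $\{D_{1,+}u^n_{0,0},\,D_{2,+}u^n_{0,0},\,D_{1,+}D_{2,+}u^n_{0,0}\}$; this is what produces precisely the last two lines of \eqref{lem1-antisym2}. No conceptually new mechanism is needed beyond those already developed in \cite{BC1}, but the computation is noticeably lengthier because both $v^n$ and the mixed part of $w^n$ couple the two coordinate directions simultaneously.
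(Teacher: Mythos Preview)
Your strategy is exactly the one the paper follows: expand $-2\langle v^n;w^n\rangle$ into the bilinear pairings of $\{D_{1,0}u^n,D_{2,0}u^n\}$ against the four pieces of $w^n$, integrate each by parts using $\Delta_1 u^n_{0,k}=\Delta_2 u^n_{j,0}=0$ and the corner relation, and collect traces. One correction to your bookkeeping sketch: you have the attributions of two blocks swapped---in the paper's computation the $(\Delta_2 u^n_{0,k})^2$ traces arise from the $\langle D_{i,0}u^n;D_{1,0}D_{2,0}u^n\rangle$ block (via formula \eqref{formule1d}), while the cross products $D_{2,0}u^n_{0,k}\,D_{1,+}u^n_{0,k}$ arise from the off-diagonal Laplacian pairings $\langle D_{2,0}u^n;\Delta_1 u^n\rangle$ (via the integration-by-parts formula \eqref{ipp1d}); this will self-correct once you carry the computation out.
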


\begin{proof}
$\bullet$ We start with the proof of \eqref{lem1-antisym1}. We first use definition \eqref{defvjkn'} and compute:
$$
2 \, \langle u^n;v^n\rangle \, = \, - 2 \, \alpha \, \langle u^n;D_{1,0} u^n\rangle \, - 2 \, \beta \, \langle u^n;D_{2,0} u^n\rangle \, ,
$$
and we now compute the first term on the right-hand side (the second one is analogous). An important observation for what follows 
is that the boundary condition \eqref{extrapolationj} gives:
$$
\forall \, n \in \N \, ,\quad \forall \, k \in \N \, ,\quad D_{1,0} u^n_{0,k} \, = \, \dfrac{u^n_{1,k}-u^n_{-1,k}}{2} \, = \, u^n_{1,k}-u^n_{0,k} 
\, = \, D_{1,+} u^n_{0,k} \, ,
$$
and, symmetrically:
$$
\forall \, n \in \N \, ,\quad \forall \, j \in \N \, ,\quad D_{2,0} u^n_{j,0} \, = \, D_{2,+} u^n_{j,0} \, .
$$
We thus have (see \eqref{def:norm} for the norm in $\mathbb{H}$ and its associated scalar product):
\begin{align*}
2 \, \langle u^n;D_{1,0} u^n\rangle =& \sum_{j,k \ge 1} u^n_{j,k} \, (u^n_{j+1,k}-u^n_{j-1,k}) \, + \, \sum_{k \ge 1} u^n_{0,k} \, (u^n_{1,k}-u^n_{0,k}) \\
&\, + \, \dfrac{1}{2} \, \sum_{j \ge 1} u^n_{j,0} \, (u^n_{j+1,0}-u^n_{j-1,0}) \, + \, \dfrac{1}{2} \, u^n_{0,0} \, (u^n_{1,0}-u^n_{0,0}) \\
=&\, - \, \sum_{k \ge 1} u^n_{1,k} \, u^n_{0,k} \, + \, \sum_{k \ge 1} u^n_{0,k} \, (u^n_{1,k}-u^n_{0,k}) 
\, - \, \dfrac{1}{2} \, u^n_{1,0} \, u^n_{0,0} \, + \, \dfrac{1}{2} \, u^n_{0,0} \, (u^n_{1,0}-u^n_{0,0}) \\
=&\, - \, \sum_{k \ge 1} (u^n_{0,k})^2  \, - \, \dfrac{1}{2} \, (u^n_{0,0})^2 \, . 
\end{align*}
Here we have used that the sums with respect to the first index $j$ are telescopic. Expression \eqref{lem1-antisym1} follows because 
$\alpha$ and $\beta$ are negative.
\bigskip

$\bullet$ We now turn to the proof of \eqref{lem1-antisym2}. We start from the definitions \eqref{defvwn'} and compute:
\begin{align}
- \, 2 \, \langle v^n;w^n\rangle \, =& \, - \, \alpha^3 \, \langle D_{1,0} u^n;\Delta_1 u^n \rangle \, - \, 
\beta^3 \, \langle D_{2,0} u^n;\Delta_2 u^n \rangle \notag \\
& \, - \, 2 \, \alpha^2 \, \beta \, \langle D_{1,0} u^n;D_{1,0} D_{2,0} u^n \rangle \, - \, 
2 \, \alpha \, \beta^2 \, \langle D_{2,0} u^n;D_{1,0} D_{2,0} u^n \rangle \notag \\
& \, - \, \alpha^2 \, \beta \, \langle \Delta_1 u^n;D_{2,0} u^n \rangle \, - \, \alpha \, \beta^2 \, \langle \Delta_2 u^n;D_{1,0} u^n \rangle 
\label{lem1-expression1} \\
& \, + \, \dfrac{\alpha^2+\beta^2}{4} \, \langle \alpha \, D_{1,0} u^n + \beta \, D_{2,0} u^n;\Delta_1 \Delta_2 u^n \rangle \, .\notag
\end{align}
We then compute each line on the right-hand side of \eqref{lem1-expression1} separately and, eventually, we combine them.

Observing that \eqref{extrapolationj} gives $\Delta_1 u^n_{0,k}=0$ for any $k \in \N$, we write $\Delta_1=D_{1,+} - D_{1,-}$ and recall the 
relation $D_{1,0}=(D_{1,+}+D_{1,-})/2$. We thus compute:
\begin{align*}
\langle D_{1,0} u^n;\Delta_1 u^n\rangle \, =& \, \sum_{j,k \ge 1} \, D_{1,0} u^n_{j,k} \, \Delta_1 u^n_{j,k} \, + \, 
\dfrac{1}{2} \, \sum_{j \ge 1} \, D_{1,0} u^n_{j,0} \, \Delta_1 u^n_{j,0} \\ 
=&\, \dfrac{1}{2} \, \sum_{j,k \ge 1} \, (D_{1,+} u^n_{j,k})^2 -(D_{1,-} u^n_{j,k})^2 \, + \, 
\dfrac{1}{4} \, \sum_{j \ge 1} \, (D_{1,+} u^n_{j,0})^2 -(D_{1,-} u^n_{j,0})^2 \\
=&\, - \, \dfrac{1}{2} \, \sum_{k \ge 1} \, (D_{1,+} u^n_{0,k})^2 \, - \, \dfrac{1}{4} \, (D_{1,+} u^n_{0,0})^2 \, ,
\end{align*}
and similarly for the scalar product $\langle D_{2,0} u^n;\Delta_2 u^n\rangle$. We thus have:
\begin{multline}
\label{lem1-ligne1}
- \, \alpha^3 \, \langle D_{1,0} u^n;\Delta_1 u^n \rangle \, - \, \beta^3 \, \langle D_{2,0} u^n;\Delta_2 u^n \rangle \\
= \, - \, \dfrac{|\alpha|^3}{2} \, \sum_{k \ge 1} \, (D_{1,+} u^n_{0,k})^2  \, - \, \dfrac{|\beta|^3}{2} \, \sum_{j \ge 1} \, (D_{2,+} u^n_{j,0})^2 
\, - \, \dfrac{|\alpha|^3}{4} \, (D_{1,+} u^n_{0,0})^2 \, - \, \dfrac{|\beta|^3}{4} \, (D_{2,+} u^n_{0,0})^2 \, .
\end{multline}
These terms constitute the first line on the right-hand side of \eqref{lem1-antisym2} as well as the first two terms in the seventh line of 
\eqref{lem1-antisym2}.

We now turn to the second line on the right-hand side of \eqref{lem1-expression1}. We first observe that the extrapolation condition \eqref{corner}
gives the relation $D_{1,0} D_{2,0} u^n_{0,0} \, = \, D_{1,+} D_{2,+} u^n_{0,0}$. Using the extrapolation conditions \eqref{extrapolation} and the 
definition of the operator $D_{2,+}$, we therefore compute:
\begin{align*}
\langle D_{1,0} u^n;D_{1,0} D_{2,0} u^n \rangle \, =& \, 
\dfrac{1}{2} \, \sum_{j,k \ge 1} \, (D_{1,0} u^n_{j,k} \, D_{1,0} u^n_{j,k+1} \, - \, D_{1,0} u^n_{j,k} \, D_{1,0} u^n_{j,k-1}) 
\, + \, \dfrac{1}{2} \, \sum_{j \ge 1} \, D_{1,0} u^n_{j,0} \, D_{1,0} D_{2,+} u^n_{j,0} \\
&+ \dfrac{1}{4} \, \sum_{k \ge 1} \, (D_{1,+} u^n_{0,k} \, D_{1,+} u^n_{0,k+1} \, - \, D_{1,+} u^n_{0,k} \, D_{1,+} u^n_{0,k-1}) 
\, + \dfrac{1}{4} \, D_{1,+} u^n_{0,0} \, D_{1,+} D_{2,+} u^n_{0,0} \\
=& \, - \, \dfrac{1}{2} \, \sum_{j \ge 1} \, D_{1,0} u^n_{j,0} \, D_{1,0} u^n_{j,1} \, + \, 
\dfrac{1}{2} \, \sum_{j \ge 1} \, D_{1,0} u^n_{j,0} \, D_{2,+} D_{1,0} u^n_{j,0} \\
&\, - \, \dfrac{1}{4} \, D_{1,+} u^n_{0,0} \, D_{1,+} u^n_{0,1} \, + \, \dfrac{1}{4} \, D_{1,+} u^n_{0,0} \, D_{2,+} D_{1,+} u^n_{0,0} \\
=& \, - \, \dfrac{1}{2} \, \sum_{j \ge 1} \, (D_{1,0} u^n_{j,0})^2 \, - \, \dfrac{1}{4} \, (D_{1,+} u^n_{0,0})^2 \, .
\end{align*}
We then use the following formula that is valid for any $\ell^2$ sequence on $\N$ (see \cite[Lemma 3.2]{BC1} for a similar computation):
\begin{equation}
\label{formule1d}
\sum_{j \ge 1} \, (D_{1,0} U_j)^2 \, + \, \dfrac{1}{4} \, \sum_{j \ge 1} \, (\Delta_1 U_j)^2 = 
\dfrac{1}{2} \, \sum_{j \ge 1} \, (D_{1,+} U_j)^2 \, + \, \dfrac{1}{2} \, \sum_{j \ge 1} \, (D_{1,-} U_j)^2 = 
\sum_{j \ge 1} \, (D_{1,+} U_j)^2 \, + \, \dfrac{1}{2} \, (D_{1,+} U_0)^2 \, ,
\end{equation}
and we thus get:
\begin{equation*}
\langle D_{1,0} u^n;D_{1,0} D_{2,0} u^n \rangle \, = \, - \, \dfrac{1}{2} \, \sum_{j \ge 1} \, (D_{1,+} u^n_{j,0})^2 
\, + \, \dfrac{1}{8} \, \sum_{j \ge 1} \, (\Delta_1 u^n_{j,0})^2 \, - \, \dfrac{1}{2} \, (D_{1,+} u^n_{0,0})^2 \, ,
\end{equation*}
with, of course, a similar expression for the other scalar product in the second line of \eqref{lem1-expression1}. Summarizing, we have 
obtained the expression:
\begin{align}
- \, 2 \, \alpha \, \beta^2 \, \langle D_{2,0} u^n;D_{1,0} D_{2,0} u^n \rangle \, -& \, 2 \, \alpha^2 \, \beta \, \langle D_{1,0} u^n;D_{1,0} D_{2,0} u^n \rangle 
\notag \\
= & - \, |\alpha| \, \beta^2 \, \sum_{k \ge 1} \, (D_{2,+} u^n_{0,k})^2 \, - \, \alpha^2 \, |\beta| \, \sum_{j \ge 1} \, (D_{1,+} u^n_{j,0})^2 \notag \\
& + \, \dfrac{|\alpha| \, \beta^2}{4} \, \sum_{k \ge 1} \, (\Delta_2 u^n_{0,k})^2 \, + \, \dfrac{\alpha^2 \, |\beta|}{4} \, \sum_{j \ge 1} \, (\Delta_1 u^n_{j,0})^2 
\label{lem1-ligne2} \\
& - \, |\alpha| \, \beta^2 \, (D_{2,+} u^n_{0,0})^2 \, - \, \alpha^2 \, |\beta| \, (D_{1,+} u^n_{0,0})^2 \, .\notag
\end{align}
These terms contribute to the second line and give the third line of \eqref{lem1-antisym2}. They will also contribute in the first two terms of the last 
line of \eqref{lem1-antisym2}.

We now turn to the third line on the right-hand side of \eqref{lem1-expression1}. Recalling that $\Delta_1u_{0,k}^n$ vanishes for any $k \in \N$, 
see \eqref{extrapolationj}, we have:
$$
\langle \Delta_1 u^n;D_{2,0} u^n \rangle \, = \, \sum_{j,k \ge 1} \, \Delta_1 u^n_{j,k} \, D_{2,0} u^n_{j,k} \, + \, 
\dfrac{1}{2} \, \sum_{j \ge 1} \, \Delta_1 u^n_{j,0} \, D_{2,+} u^n_{j,0} \, ,
$$
and we now use the discrete integration by parts formula:
\begin{equation}
\label{ipp1d}
\sum_{j \ge 1} \, (\Delta_1 U_j) \, V_j \, = \, - \, \sum_{j \ge 1} \, (D_{1,+} U_j) \, D_{1,+} V_j \, - \, (D_{1,+} U_0) \, V_1 \, ,
\end{equation}
which yields:
\begin{align*}
\langle \Delta_1 u^n;D_{2,0} u^n \rangle \, =& \, - \, \sum_{j,k \ge 1} \, D_{1,+} u^n_{j,k} \, D_{1,+} D_{2,0} u^n_{j,k} \, - \, 
\sum_{k \ge 1} \, D_{1,+} u^n_{0,k} \, D_{2,0} u^n_{1,k} \\
& \, - \, \dfrac{1}{2} \, \sum_{j \ge 1} \, D_{1,+} u^n_{j,0} \, D_{1,+} D_{2,+} u^n_{j,0} \, - \, \dfrac{1}{2} \, D_{1,+} u^n_{0,0} \, D_{2,+} u^n_{1,0} \, .
\end{align*}
The first sum on the right-hand side is telescopic with respect to $k$ and it partially simplifies with the third term on the right-hand side (the sum 
with respect to the index $j$ only). We get:
\begin{align*}
\langle \Delta_1 u^n;D_{2,0} u^n \rangle \, =& \, \dfrac{1}{2} \, \sum_{j \ge 1} \, (D_{1,+} u^n_{j,0})^2 
\, - \, \sum_{k \ge 1} \, D_{1,+} u^n_{0,k} \, D_{2,0} u^n_{1,k} \, - \, \dfrac{1}{2} \, D_{1,+} u^n_{0,0} \, D_{2,+} u^n_{1,0} \\
=& \, \dfrac{1}{2} \, \sum_{j \ge 1} \, (D_{1,+} u^n_{j,0})^2 \, - \, \sum_{k \ge 1} \, D_{1,+} u^n_{0,k} \, D_{2,0} u^n_{0,k} \\
& \, - \, \sum_{k \ge 1} \, D_{1,+} u^n_{0,k} \, D_{2,0} D_{1,+} u^n_{0,k} \, - \, \dfrac{1}{2} \, D_{1,+} u^n_{0,0} \, D_{2,+} D_{1,+} u^n_{0,0} 
\, - \, \dfrac{1}{2} \, D_{1,+} u^n_{0,0} \, D_{2,+} u^n_{0,0} \\
=& \, \dfrac{1}{2} \, \sum_{j \ge 1} \, (D_{1,+} u^n_{j,0})^2 \, - \, \sum_{k \ge 1} \, D_{1,+} u^n_{0,k} \, D_{2,0} u^n_{0,k} 
\, + \, \dfrac{1}{2} \, (D_{1,+} u^n_{0,0})^2 \, - \, \dfrac{1}{2} \, D_{1,+} u^n_{0,0} \, D_{2,+} u^n_{0,0} \, .
\end{align*}
Summing with the analogous term in the third line of \eqref{lem1-expression1}, this gives the contribution:
\begin{align}
- \, \alpha^2 \, \beta \, \langle \Delta_1 u^n;D_{2,0} u^n \rangle \, &- \, \alpha \, \beta^2 \, \langle \Delta_2 u^n;D_{1,0} u^n \rangle \notag \\
=& \, \dfrac{\alpha^2 \, |\beta|}{2} \, \sum_{j \ge 1} \, (D_{1,+} u^n_{j,0})^2 \, + \, \dfrac{|\alpha| \, \beta^2}{2} \, \sum_{k \ge 1} \, (D_{2,+} u^n_{0,k})^2 \notag \\
&\, - \, \alpha^2 \, |\beta| \, \sum_{k \ge 1} \, D_{2,0} u_{0,k}^n \, D_{1,+} u^n_{0,k} 
\, - \, |\alpha| \, \beta^2 \, \sum_{j \ge 1} \, D_{1,0} u_{j,0}^n \, D_{2,+} u^n_{j,0} \label{lem1-ligne3} \\
&\, + \, \dfrac{\alpha^2 \, |\beta|}{2} \, (D_{1,+} u^n_{0,0})^2 \, + \, \dfrac{|\alpha| \, \beta^2}{2} \, (D_{2,+} u^n_{0,0})^2 
\, - \, \dfrac{|\alpha| \, |\beta|}{2} \, (|\alpha| + |\beta|) \, D_{1,+} u^n_{0,0} \, D_{2,+} u^n_{0,0} \, .\notag
\end{align}
These terms give the final contribution in the second line of \eqref{lem1-antisym2}. They also give the fourth line of \eqref{lem1-antisym2}.  
Finally these terms give the final contribution in the eighth line of \eqref{lem1-antisym2}.

We now turn to the fourth and last line on the right-hand side of \eqref{lem1-expression1}. We first observe that the boundary conditions 
\eqref{extrapolation} and \eqref{corner} imply that the quantity $\Delta_1 \Delta_2 u^n_{j,k}$ vanishes whenever $j$ or $k$ (or both) is zero. 
In particular, \eqref{extrapolation} and \eqref{corner} imply $\Delta_1 \Delta_2 u^n_{0,0}=0$. We thus have:
$$
\langle D_{1,0} u^n;\Delta_1 \Delta_2 u^n \rangle \, = \, \sum_{j,k \ge 1} \, D_{1,0} u^n_{j,k} \, \Delta_1 \Delta_2 u^n_{j,k} \, ,
$$
and we then perform a discrete integration by parts with respect to $k$ (see \eqref{ipp1d}) to get:
\begin{align*}
\langle D_{1,0} u^n;\Delta_1 \Delta_2 u^n \rangle \, =& \, - \, \sum_{j,k \ge 1} \, D_{1,0} D_{2,+} u^n_{j,k} \, \Delta_1 D_{2,+} u^n_{j,k} 
\, - \, \sum_{j \ge 1} \, D_{1,0} u^n_{j,1} \, \Delta_1 D_{2,+} u^n_{j,0} \\
=& \, \dfrac{1}{2} \, \sum_{k \ge 1} \, (D_{1,+} D_{2,+} u^n_{0,k})^2 \, - \, \sum_{j \ge 1} \, D_{1,0} u^n_{j,1} \, \Delta_1 D_{2,+} u^n_{j,0} \\
=& \, \dfrac{1}{2} \, \sum_{k \ge 1} \, (D_{1,+} D_{2,+} u^n_{0,k})^2 \\
&- \, \sum_{j \ge 1} \, D_{1,0} D_{2,+} u^n_{j,0} \, \Delta_1 D_{2,+} u^n_{j,0} \, - \, \sum_{j \ge 1} \, D_{1,0} u^n_{j,0} \, \Delta_1 D_{2,+} u^n_{j,0} \\
=& \, \dfrac{1}{2} \, \sum_{k \ge 1} \, (D_{1,+} D_{2,+} u^n_{0,k})^2 \, + \, \dfrac{1}{2} \, (D_{1,+} D_{2,+} u^n_{0,0})^2 
 \, - \, \sum_{j \ge 1} \, D_{1,0} u^n_{j,0} \, \Delta_1 D_{2,+} u^n_{j,0} \, .
\end{align*}
We thus get the final contribution:
\begin{align}
\dfrac{\alpha^2+\beta^2}{4} \, \langle \alpha \, D_{1,0} u^n +& \beta \, D_{2,0} u^n;\Delta_1 \Delta_2 u^n \rangle \notag \\
=& \, - \, |\alpha| \, \dfrac{\alpha^2+\beta^2}{8} \, \sum_{k \ge 1} \, (D_{1,+} D_{2,+} u^n_{0,k})^2 
 \, - \, |\beta| \, \dfrac{\alpha^2+\beta^2}{8} \, \sum_{j \ge 1} \, (D_{1,+} D_{2,+} u^n_{j,0})^2 \label{lem1-ligne4} \\
& + \, |\alpha| \, \dfrac{\alpha^2+\beta^2}{4} \, \sum_{j \ge 1} \, D_{1,0} u^n_{j,0} \, \Delta_1 D_{2,+} u^n_{j,0} 
\, + \, |\beta| \, \dfrac{\alpha^2+\beta^2}{4} \, \sum_{k \ge 1} \, D_{2,0} u^n_{0,k} \, \Delta_2 D_{1,+} u^n_{0,k} \notag \\
& - \, (|\alpha|+|\beta|) \, \dfrac{\alpha^2+\beta^2}{8} \, (D_{1,+} D_{2,+} u^n_{0,0})^2 \, .\notag
\end{align}
These terms give the fifth and sixth lines of \eqref{lem1-antisym2} and the last term in the seventh line. It now only remains to collect the 
contributions in \eqref{lem1-ligne1}, \eqref{lem1-ligne2}, \eqref{lem1-ligne3} and \eqref{lem1-ligne4} to obtain the relation \eqref{lem1-antisym2}.
\end{proof}

\noindent We now explain how to derive the expression of the first symmetric term in \eqref{bilan-energie}.

\begin{lemma}
\label{lem2}
Let $a,b<0$. Let $u^n \in \mathbb{H}$, and let the sequences $v^n,w^n$ be defined on the set of interior indices $\mathbb{I}$ by \eqref{defvwn'}. 
Then there holds:
\begin{align}
\| \, v^n \, \|^2 \, - \, 2 \, \langle u^n;w^n\rangle \, =& \, - \, \dfrac{\alpha^2}{4} \, \| \Delta_1 u^n \|_{\ell^2(\mathring{\mathbb{I}})}^2 
\, - \, \dfrac{\beta^2}{4} \, \| \Delta_2 u^n \|_{\ell^2(\mathring{\mathbb{I}})}^2  \, - \, \dfrac{\alpha^2+\beta^2}{16} \, \Big( 
\| D_{1,-} D_{2,-} u^n \|_{\ell^2(\mathring{\mathbb{I}})}^2 \notag \\
& \quad \, + \, \| D_{1,-} D_{2,+} u^n \|_{\ell^2(\mathring{\mathbb{I}})}^2 \, + \, \| D_{1,+} D_{2,-} u^n \|_{\ell^2(\mathring{\mathbb{I}})}^2 
\, + \, \| D_{1,+} D_{2,+} u^n \|_{\ell^2(\mathring{\mathbb{I}})}^2 \Big) \notag \\
& \, - \, \dfrac{\alpha^2}{8} \, \sum_{j \ge 1} (\Delta_1 u_{j,0}^n)^2 \, - \, \dfrac{\beta^2}{8} \, \sum_{k \ge 1} (\Delta_2 u_{0,k}^n)^2 \notag \\
& \, - \, \alpha^2 \, \sum_{k \ge 1} u_{0,k}^n \, D_{1,+} u_{0,k}^n \, - \, \beta^2 \, \sum_{j \ge 1} u_{j,0}^n \, D_{2,+} u_{j,0}^n \label{lem2-sym1} \\
& \, - \, \dfrac{(\alpha^2+\beta^2)}{8} \, \sum_{j \ge 1} (D_{1,+} D_{2,+} u_{j,0}^n)^2  \, - \, \dfrac{(\alpha^2+\beta^2)}{8} \, 
\sum_{k \ge 1} (D_{1,+} D_{2,+} u_{0,k}^n)^2 \notag \\
& \, - \, \dfrac{(\alpha^2+\beta^2)}{4} \, \sum_{j \ge 1} D_{1,+} u_{j,0}^n \, D_{1,+} D_{2,+} u_{j,0}^n 
\, - \, \dfrac{(\alpha^2+\beta^2)}{4} \, \sum_{k \ge 1} D_{2,+} u_{0,k}^n \, D_{1,+} D_{2,+} u_{0,k}^n \notag \\
& \, + \, |\alpha \, \beta| \, (u_{0,0}^n)^2 \, - \, \dfrac{\alpha^2}{2} \, u_{0,0}^n \, D_{1,+} u_{0,0}^n \, - \, \dfrac{\beta^2}{2} \, u_{0,0}^n \, D_{2,+} u_{0,0}^n 
\notag \\
& \, - \, \dfrac{(\alpha^2+\beta^2)}{4} \, \Big( u_{0,0}^n + D_{1,+} u_{0,0}^n + D_{2,+} u_{0,0}^n \Big) \, D_{1,+} D_{2,+} u_{0,0}^n \notag \\
& \, - \, \dfrac{3 \, (\alpha^2+\beta^2)}{16} \, (D_{1,+} D_{2,+} u_{0,0}^n)^2 \, ,\notag
\end{align}
where we recall the notation $\mathring{\mathbb{I}}=\N^* \times \N^*$.
\end{lemma}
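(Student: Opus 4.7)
\medskip

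\noindent \textbf{Proof plan.} The strategy is to expand $\| v^n \|^2 - 2 \langle u^n; w^n \rangle$ using the definitions \eqref{defvwn'}, which produces seven scalar products: three coming from the squared norm (namely $\alpha^2 \, \| D_{1,0} u^n\|^2$, $\beta^2 \, \| D_{2,0} u^n\|^2$ and a cross term $2 \, \alpha \, \beta \, \langle D_{1,0} u^n; D_{2,0} u^n \rangle$) and four coming from $-2 \langle u^n; w^n\rangle$ (the two pure Laplacian terms $\alpha^2 \langle u^n; \Delta_1 u^n\rangle$, $\beta^2 \langle u^n; \Delta_2 u^n\rangle$, the mixed term $2\alpha\beta \langle u^n; D_{1,0} D_{2,0} u^n\rangle$, and the biLaplacian term $-\frac{\alpha^2+\beta^2}{4} \langle u^n; \Delta_1 \Delta_2 u^n\rangle$). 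Each of these seven pieces will be treated separately by a one-dimensional discrete integration by parts \eqref{ipp1d}, applied either in the $j$-variable or the $k$-variable, and the contributions will then be collected line by line.

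The workhorse for the proof is a systematic use of the extrapolation conditions \eqref{extrapolation} and \eqref{corner}, which I will exploit in the following way. First, \eqref{extrapolation} forces $\Delta_1 u^n_{0,k} = 0$ and $\Delta_2 u^n_{j,0} = 0$ for all $j,k \in \N$, which will kill many boundary terms produced by the integration by parts and transform $D_{1,0}$ into $D_{1,+}$ (resp.\ $D_{2,0}$ into $D_{2,+}$) along the relevant axis. Second, \eqref{corner} combined with \eqref{extrapolation} guarantees that $\Delta_1 \Delta_2 u^n$ vanishes on the two axes (in particular at the corner), so the quantity $\langle u^n; \Delta_1 \Delta_2 u^n \rangle$ reduces to a sum over $\mathring{\mathbb{I}}$ and its double integration by parts (first in $k$, then in $j$) produces clean boundary traces involving only $D_{1,+} D_{2,+}$ applied on $\{ j=0 \}$, $\{ k=0 \}$ and at $(0,0)$. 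Third, to combine the $\| D_{1,0} u^n\|^2$ and $\langle u^n; \Delta_1 u^n\rangle$ pieces, I will use the pointwise identity $(D_{1,0} U)^2 = \tfrac{1}{2}(D_{1,+} U)^2 + \tfrac{1}{2}(D_{1,-} U)^2 - \tfrac{1}{4}(\Delta_1 U)^2$, together with the relation \eqref{formule1d}, so as to convert everything into the $D_{1,+}$ and $\Delta_1$ normalization (and symmetrically in the $y$-direction). The cross terms $\langle D_{1,0} u^n; D_{2,0} u^n \rangle$ and $\langle u^n; D_{1,0} D_{2,0} u^n \rangle$ will be treated by a discrete integration by parts in one index followed by the same Pythagoras-like identity in the other.

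The bookkeeping of the boundary contributions must be done with extra care because of the modified norm \eqref{def:norm}: each scalar product picks up weights $1$ on $\mathring{\mathbb{I}}$, $1/2$ on each axis $\{ j=0 \}$, $\{ k=0 \}$, and $1/4$ at the corner $(0,0)$. Concretely, I will split every scalar product as
\begin{equation*}
\langle f;g\rangle \, = \, \sum_{j,k \ge 1} f_{j,k} \, g_{j,k} \, + \, \tfrac{1}{2} \sum_{k \ge 1} f_{0,k} \, g_{0,k} \, + \, \tfrac{1}{2} \sum_{j \ge 1} f_{j,0} \, g_{j,0} \, + \, \tfrac{1}{4} \, f_{0,0} \, g_{0,0} \, ,
\end{equation*}
handle the interior piece by a standard two-dimensional summation by parts (which produces the $\mathring{\mathbb{I}}$-dissipation terms displayed in the first two lines of \eqref{lem2-sym1}), then apply \eqref{ipp1d} to the two axial pieces, and finally collect the pointwise corner contribution. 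The trickiest piece is expected to be the biLaplacian term $\langle u^n; \Delta_1 \Delta_2 u^n \rangle$, because the double integration by parts generates several cross boundary terms of the form $D_{1,0} u^n_{j,1} \Delta_1 D_{2,+} u^n_{j,0}$ that must be further rewritten (using again \eqref{extrapolation} on $\{ j = 0 \}$ and the already proven identity $D_{1,0} u^n_{0,0} = D_{1,+} u^n_{0,0}$, etc.) before they fit into the structure of \eqref{lem2-sym1}; this step is the main obstacle, exactly as in the analogous computation leading to \eqref{lem1-ligne4}. Once all seven contributions are assembled and the axial and corner weights are reconciled, the stated identity \eqref{lem2-sym1} will follow by regrouping terms according to the index set on which they are supported.
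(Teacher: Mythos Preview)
Your plan is correct and follows essentially the same route as the paper: the authors likewise expand via \eqref{defvwn'} into the three natural blocks $\alpha^2(\|D_{1,0}u^n\|^2+\langle u^n;\Delta_1 u^n\rangle)+\beta^2(\cdots)$, $2\alpha\beta(\langle D_{1,0}u^n;D_{2,0}u^n\rangle+\langle u^n;D_{1,0}D_{2,0}u^n\rangle)$, and $-\tfrac{\alpha^2+\beta^2}{4}\langle u^n;\Delta_1\Delta_2 u^n\rangle$, then treat each block with one-dimensional telescopic/integration-by-parts identities under the weighted norm \eqref{def:norm}, exploiting $\Delta_1 u^n_{0,k}=\Delta_2 u^n_{j,0}=\Delta_1\Delta_2 u^n|_{\text{axes}}=0$. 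The only notable difference is that for the mixed $\alpha\beta$ block the paper does not integrate by parts but uses the pointwise telescopic identity $D_{1,0}u_{j,k}\,D_{2,0}u_{j,k}+u_{j,k}\,D_{1,0}D_{2,0}u_{j,k}=\tfrac14 D_{1,+}D_{2,+}(u_{j-1,k-1}u_{j,k}+u_{j-1,k}u_{j,k-1})$, which collapses that block directly to $|\alpha\beta|\,(u^n_{0,0})^2$; and for the biLaplacian block it uses the variant \eqref{ipp1d'} rather than \eqref{ipp1d}, which avoids the intermediate $D_{1,0}u^n_{j,1}\,\Delta_1 D_{2,+}u^n_{j,0}$-type terms you anticipate.
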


\begin{proof}
We start from the definitions \eqref{defvwn'} and compute:
\begin{align}
\| \, v^n \, \|^2 \, - \, 2 \, \langle u^n;w^n\rangle \, =& \, \alpha^2 \, \Big( \| D_{1,0} u^n \|^2 \, + \, \langle u^n;\Delta_1 u^n \rangle \Big) 
\, + \, \beta^2 \, \Big( \| D_{2,0} u^n \|^2 \, + \, \langle u^n;\Delta_2 u^n \rangle \Big) \notag \\
& \, + \, 2 \, \alpha \, \beta \, \Big( \langle D_{1,0} u^n;D_{2,0} u^n \rangle \, + \, \langle u^n;D_{1,0} D_{2,0} u^n \rangle \Big) \label{lem2-expression} \\
& \, - \, \dfrac{\alpha^2 + \beta^2}{4} \, \langle u^n;\Delta_1 \Delta_2 u^n \rangle \, .\notag
\end{align}
In order to simplify the first line on the right-hand side of \eqref{lem2-expression}, we use the telescopic formula (see \cite{BC1}):
$$
\dfrac{(U_{\ell+1}-U_{\ell-1})^2}{4} \, + \, U_\ell \, (U_{\ell+1}-2 \, U_\ell+U_{\ell-1}) \, = \, -\dfrac{(U_{\ell+1}-2\, U_\ell+U_{\ell-1})^2}{4} 
\, + \, \dfrac{1}{2} \, (U_{\ell+1}^2-U_\ell^2) \, - \, \dfrac{1}{2} \, (U_\ell^2-U_{\ell-1}^2) \, .
$$
We thus get:
$$
\sum_{j,k \ge 1} \, (D_{1,0} u^n_{j,k})^2 \, + \, \sum_{j,k \ge 1} \, u^n_{j,k} \, \Delta_1 u^n_{j,k} \, = \, 
- \, \dfrac{1}{4} \, \| \Delta_1 u^n \|_{\ell^2(\mathring{\mathbb{I}})}^2 \, - \,  \dfrac{1}{2} \, \sum_{k \ge 1} \, (u_{1,k}^n)^2 -(u_{0,k}^n)^2 \, ,
$$
and we also get a similar expression on one side of the boundary (since the tangential index $k$ is a mere parameter in this calculation):
$$
\sum_{j \ge 1} \, (D_{1,0} u^n_{j,0})^2 \, + \, \sum_{j \ge 1} \, u^n_{j,0} \, \Delta_1 u^n_{j,0} \, = \, - \, \dfrac{1}{4} \, \sum_{j \ge 1} \, (\Delta_1 u^n_{j,0})^2 
\, - \, \dfrac{1}{2} \, (u_{1,0}^n)^2 \, + \, \dfrac{1}{2} \, (u_{0,0}^n)^2 \, .
$$
Combining the previous two equalities and recalling that we have $\Delta_1 u^n_{0,k}=0$ for any $k \in \N$, we get:
\begin{align*}
\| D_{1,0} u^n \|^2 \, + \, \langle u^n;\Delta_1 u^n \rangle \, =& \, - \, \dfrac{1}{4} \, \| \Delta_1 u^n \|_{\ell^2(\mathring{\mathbb{I}})}^2 
 \, - \dfrac{1}{8} \, \sum_{j \ge 1} \, (\Delta_1 u^n_{j,0})^2 
\, + \dfrac{1}{2} \, \sum_{k \ge 1} \, (D_{1,+} u_{0,k}^n)^2 - (u_{1,k}^n)^2 + (u_{0,k}^n)^2 \\
& \, + \, \dfrac{1}{4} \, (D_{1,+} u_{0,0}^n)^2 \, - \, \dfrac{1}{4} \, (u_{1,0}^n)^2 \, + \, \dfrac{1}{4} \, (u_{0,0}^n)^2 \\
=& \, - \dfrac{1}{4} \, \| \Delta_1 u^n \|_{\ell^2(\mathring{\mathbb{I}})}^2 \, - \dfrac{1}{8} \, \sum_{j \ge 1} \, (\Delta_1 u^n_{j,0})^2 
\, - \sum_{k \ge 1} \, u_{0,k}^n \, D_{1,+} u_{0,k}^n \, - \dfrac{1}{2} \, u_{0,0}^n \, D_{1,+} u_{0,0}^n \, .
\end{align*}
We thus obtain the expression of the first line on the right-hand side of \eqref{lem2-expression}:
\begin{align}
\alpha^2 \, \Big( \| D_{1,0} u^n \|^2 \, +& \, \langle u^n;\Delta_1 u^n \rangle \Big) 
\, + \, \beta^2 \, \Big( \| D_{2,0} u^n \|^2 \, + \, \langle u^n;\Delta_2 u^n \rangle \Big) \notag \\
=& \, - \, \dfrac{\alpha^2}{4} \, \| \Delta_1 u^n \|_{\ell^2(\mathring{\mathbb{I}})}^2 
\, - \, \dfrac{\beta^2}{4} \, \| \Delta_2 u^n \|_{\ell^2(\mathring{\mathbb{I}})}^2 
\, - \, \dfrac{\alpha^2}{8} \, \sum_{j \ge 1} (\Delta_1 u_{j,0}^n)^2 \, - \, \dfrac{\beta^2}{8} \, \sum_{k \ge 1} (\Delta_2 u_{0,k}^n)^2 \label{lem2-relation1} \\
& \, - \, \alpha^2 \, \sum_{k \ge 1} u_{0,k}^n \, D_{1,+} u_{0,k}^n \, - \, \beta^2 \, \sum_{j \ge 1} u_{j,0}^n \, D_{2,+} u_{j,0}^n 
\, - \, \dfrac{\alpha^2}{2} \, u_{0,0}^n \, D_{1,+} u_{0,0}^n \, - \, \dfrac{\beta^2}{2} \, u_{0,0}^n \, D_{2,+} u_{0,0}^n \, .\notag
\end{align}
These terms give the first two terms of the first line, the third and fourth lines and the last two terms of the seventh line in \eqref{lem2-sym1}.

We now turn to the second line on the right-hand side of the decomposition \eqref{lem2-expression}. We use the telescopic formula:
$$
D_{1,0} u^n_{j,k} \, D_{2,0} u^n_{j,k} \, + \, u^n_{j,k} \, D_{1,0} D_{2,0} u^n_{j,k} \, = \, \dfrac{1}{4} \, D_{1,+} D_{2,+} \Big( 
u^n_{j-1,k-1} \, u^n_{j,k} \, + \, u^n_{j-1,k} u^n_{j,k-1} \Big) \, ,
$$
and thus obtain the relation:
$$
\sum_{j,k \ge 1} \, D_{1,0} u^n_{j,k} \, D_{2,0} u^n_{j,k} \, + \, u^n_{j,k} \, D_{1,0} D_{2,0} u^n_{j,k} \, = \, 
\dfrac{1}{4} \, (u^n_{0,0} \, u^n_{1,1} \, + \, u^n_{0,1} u^n_{1,0}) \, ,
$$
as well as the relations\footnote{Here we use the boundary conditions \eqref{extrapolation}.}:
\begin{align*}
\sum_{j \ge 1} \, D_{1,0} u^n_{j,0} \, D_{2,0} u^n_{j,0} \, + \, u^n_{j,0} \, D_{1,0} D_{2,0} u^n_{j,0} \, =& 
\, - \, \dfrac{1}{2} \, (u^n_{0,0} \, u^n_{1,1} + u^n_{0,1} \, u^n_{1,0}) \, + \,  u^n_{0,0} \, u^n_{1,0} \, ,\\
\sum_{k \ge 1} \, D_{1,0} u^n_{0,k} \, D_{2,0} u^n_{0,k} \, + \, u^n_{0,k} \, D_{1,0} D_{2,0} u^n_{0,k} \, =& 
\, - \, \dfrac{1}{2} \, (u^n_{0,0} \, u^n_{1,1} + u^n_{0,1} \, u^n_{1,0}) \, + \,  u^n_{0,0} \, u^n_{0,1} \, .
\end{align*}
Adding the interior and boundary contributions together with the corner contribution\footnote{We recall that the boundary conditions 
\eqref{extrapolation} and \eqref{corner} yield $D_{1,0} u^n_{0,0}=D_{1,+} u^n_{0,0}$, $D_{2,0} u^n_{0,0}=D_{2,+} u^n_{0,0}$ and 
$D_{1,0} D_{2,0} u^n_{0,0}=D_{1,+} D_{2,+} u^n_{0,0}$.} at $(0,0)$, we eventually get:
\begin{align*}
\langle D_{1,0} u^n;D_{2,0} u^n \rangle \, + \, \langle u^n;D_{1,0} D_{2,0} u^n \rangle \, =& 
\, - \, \dfrac{1}{4} \, (u^n_{0,0} \, u^n_{1,1} \, + \, u^n_{0,1} u^n_{1,0}) \, + \, \dfrac{1}{2} \, u^n_{0,0} \, (u^n_{0,1} + u^n_{1,0}) \\
&\, + \, \dfrac{1}{4} \, D_{1,+}u_{0,0}^n \, D_{2,+}u_{0,0}^n \, + \, \dfrac{1}{4} \, u_{0,0}^n \, D_{1,+} D_{2,+} u_{0,0}^n \\
=& \, \dfrac{1}{2} \, (u^n_{0,0})^2 \, .
\end{align*}
Recalling that both $\alpha$ and $\beta$ are negative, we end up with:
\begin{equation}
\label{lem2-relation2}
2 \, \alpha \, \beta \, \Big( \langle D_{1,0} u^n;D_{2,0} u^n \rangle \, + \, \langle u^n;D_{1,0} D_{2,0} u^n \rangle \Big) \, = \, 
|\alpha \, \beta| \, (u^n_{0,0})^2 \, ,
\end{equation}
and this gives the first term of the seventh line in \eqref{lem2-sym1}.

It remains to examine the very last term on the right-hand side of \eqref{lem2-expression}. We first recall that $\Delta_1 \Delta_2 u^n_{j,k}$ 
vanishes whenever $j$ or $k$ is zero. This property has already been used in the proof of Lemma \ref{lem1}. We thus have:
$$
\langle u^n;\Delta_1 \Delta_2 u^n \rangle \, = \, \sum_{j,k \ge 1} \, u_{j,k}^n \, \Delta_1 \Delta_2 u^n_{j,k} \, .
$$
We now use twice (alternatively with respect to the second and first variables) the algebraic relation:
\begin{multline}
\label{ipp1d'}
U_\ell \, (V_{\ell+1}-2\, V_\ell+V_{\ell-1}) \, + \, \dfrac{1}{2} \, (U_\ell-U_{\ell-1}) \, (V_\ell-V_{\ell-1}) 
\, + \, \dfrac{1}{2} \, (U_{\ell+1}-U_\ell) \, (V_{\ell+1}-V_\ell) \\
= \, \dfrac{1}{2} \, (U_{\ell+1}+U_\ell) \, (V_{\ell+1}-V_\ell) \, - \, \dfrac{1}{2} \, (U_\ell+U_{\ell-1}) \, (V_\ell-V_{\ell-1}) \, ,
\end{multline}
which yields, using the fact that the right-hand side of \eqref{ipp1d'} is telescopic:
\begin{align}
\langle u^n;\Delta_1 \Delta_2 u^n \rangle \, =& \, \dfrac{1}{4} \, \| D_{1,-} D_{2,-} u^n \|_{\ell^2(\mathring{\mathbb{I}})}^2 
\, + \, \dfrac{1}{4} \, \| D_{1,-} D_{2,+} u^n \|_{\ell^2(\mathring{\mathbb{I}})}^2 
\, + \, \dfrac{1}{4} \, \| D_{1,+} D_{2,-} u^n \|_{\ell^2(\mathring{\mathbb{I}})}^2 \notag \\
&\, + \, \dfrac{1}{4} \, \| D_{1,+} D_{2,+} u^n \|_{\ell^2(\mathring{\mathbb{I}})}^2 
\, - \, \dfrac{1}{2} \, \sum_{j \ge 1} \, (u_{j,0}^n+u_{j,1}^n) \, \Delta_1 D_{2,+} u_{j,0}^n \label{lem2-equation1} \\
&\, + \, \dfrac{1}{4} \, \sum_{k \ge 1} \, (D_{2,-} u_{0,k}^n + D_{2,-} u_{1,k}^n) \, D_{1,+} D_{2,-} u_{0,k}^n \notag\\
&\, + \, \dfrac{1}{4} \, \sum_{k \ge 1} \, (D_{2,+} u_{0,k}^n + D_{2,+} u_{1,k}^n) \, D_{1,+} D_{2,+} u_{0,k}^n \, .\notag
\end{align}
Let us look at the sums with respect to $k$ arising in the last two lines on the right-hand side of \eqref{lem2-equation1}. From the definition 
$D_{1,+} D_{2,-} u_{0,k}^n=D_{2,-} u_{1,k}^n-D_{2,-} u_{0,k}^n$, we get:
\begin{align*}
\sum_{k \ge 1} \, (D_{2,-} u_{0,k}^n + D_{2,-} u_{1,k}^n) \, D_{1,+} D_{2,-} u_{0,k}^n \, =& \, 
\sum_{k \ge 1} \, (D_{2,-} u_{1,k}^n)^2 - (D_{2,-} u_{0,k}^n)^2 \\
=& \, \sum_{k \ge 1} \, (D_{1,+} D_{2,-} u_{0,k}^n)^2 + \, 2 \, D_{2,-} u_{0,k}^n \, D_{1,+} D_{2,-} u_{0,k}^n \\
=& \, \sum_{k \ge 0} \, (D_{1,+} D_{2,+} u_{0,k}^n)^2 + \, 2 \, D_{2,+} u_{0,k}^n \, D_{1,+} D_{2,+} u_{0,k}^n \, .
\end{align*}
The sum in the fourth line of \eqref{lem2-equation1} can be rewritten similarly (except for the very last manipulation which was a shift on the 
index $k$), which yields the following equivalent expression for the scalar product $\langle u^n;\Delta_1 \Delta_2 u^n \rangle$:
\begin{align}
\langle u^n;\Delta_1 \Delta_2 u^n \rangle \, =& \, \dfrac{1}{4} \, \| D_{1,-} D_{2,-} u^n \|_{\ell^2(\mathring{\mathbb{I}})}^2 
\, + \, \dfrac{1}{4} \, \| D_{1,-} D_{2,+} u^n \|_{\ell^2(\mathring{\mathbb{I}})}^2 
\, + \, \dfrac{1}{4} \, \| D_{1,+} D_{2,-} u^n \|_{\ell^2(\mathring{\mathbb{I}})}^2 \notag \\
&\, + \, \dfrac{1}{4} \, \| D_{1,+} D_{2,+} u^n \|_{\ell^2(\mathring{\mathbb{I}})}^2 
\, - \, \dfrac{1}{2} \, \sum_{j \ge 1} \, (u_{j,0}^n+u_{j,1}^n) \, \Delta_1 D_{2,+} u_{j,0}^n \label{lem2-equation2} \\
&\, + \, \dfrac{1}{2} \, \sum_{k \ge 1} \, (D_{1,+} D_{2,+} u_{0,k}^n)^2 \, + \, \sum_{k \ge 1} \, D_{2,+} u_{0,k}^n \, D_{1,+} D_{2,+} u_{0,k}^n \notag\\
&\, + \, \dfrac{1}{4} \, (D_{1,+} D_{2,+} u_{0,0}^n)^2 \, + \, \dfrac{1}{2} \, D_{2,+} u_{0,0}^n \, D_{1,+} D_{2,+} u_{0,0}^n \, .\notag
\end{align}
We now deal with the sum with respect to $j$ in the second line of \eqref{lem2-equation2}. We first decompose:
$$
\sum_{j \ge 1} \, (u_{j,0}^n+u_{j,1}^n) \, \Delta_1 D_{2,+} u_{j,0}^n \, = \, 
2 \, \sum_{j \ge 1} \, u_{j,0}^n \, \Delta_1 D_{2,+} u_{j,0}^n \, + \, \sum_{j \ge 1} \, D_{2,+} u_{j,0}^n \, \Delta_1 D_{2,+} u_{j,0}^n \, ,
$$
and then apply the integration by parts formula \eqref{ipp1d'} to each of the two sums. After a few manipulations, we obtain the expressions:
\begin{align*}
\sum_{j \ge 1} \, u_{j,0}^n \, \Delta_1 D_{2,+} u_{j,0}^n \, =& \, - \, \sum_{j \ge 1} \, D_{1,+} u_{j,0}^n \, D_{1,+} D_{2,+} u_{j,0}^n 
\, - \, (u_{0,0}^n +D_{1,+} u_{0,0}^n) \, D_{1,+} D_{2,+} u_{0,0}^n \\
\sum_{j \ge 1} \, D_{2,+} u_{j,0}^n \, \Delta_1 D_{2,+} u_{j,0}^n \, =& \, - \, \sum_{j \ge 1} \, (D_{1,+} D_{2,+} u_{j,0}^n)^2 
\, - \, (D_{1,+} D_{2,+} u_{0,0}^n)^2  \, - \, D_{2,+} u_{0,0}^n \, D_{1,+} D_{2,+} u_{0,0}^n \, .
\end{align*}
Going back to \eqref{lem2-equation2} and substituting, we obtain:
\begin{align}
\langle u^n;\Delta_1 \Delta_2 u^n \rangle \, =& \, \dfrac{1}{4} \, \| D_{1,-} D_{2,-} u^n \|_{\ell^2(\mathring{\mathbb{I}})}^2 
\, + \, \dfrac{1}{4} \, \| D_{1,-} D_{2,+} u^n \|_{\ell^2(\mathring{\mathbb{I}})}^2 
\, + \, \dfrac{1}{4} \, \| D_{1,+} D_{2,-} u^n \|_{\ell^2(\mathring{\mathbb{I}})}^2 \notag \\
&\, + \, \dfrac{1}{4} \, \| D_{1,+} D_{2,+} u^n \|_{\ell^2(\mathring{\mathbb{I}})}^2 \notag \\
&\, + \, \dfrac{1}{2} \, \sum_{j \ge 1} \, (D_{1,+} D_{2,+} u_{j,0}^n)^2 
\, + \, \dfrac{1}{2} \, \sum_{k \ge 1} \, (D_{1,+} D_{2,+} u_{0,k}^n)^2 \label{lem2-relation3} \\
&\, + \, \sum_{j \ge 1} \, D_{1,+} u_{j,0}^n \, D_{1,+} D_{2,+} u_{j,0}^n \, + \, \sum_{k \ge 1} \, D_{2,+} u_{0,k}^n \, D_{1,+} D_{2,+} u_{0,k}^n \notag \\
&\, + \, (u_{0,0}^n + D_{1,+} u_{0,0}^n + D_{2,+} u_{0,0}^n) \, D_{1,+} D_{2,+} u_{0,0}^n \, + \, \dfrac{3}{4} \, (D_{1,+} D_{2,+} u_{0,0}^n)^2 \, .\notag
\end{align}
We now multiply \eqref{lem2-relation3} by $-(\alpha^2+\beta^2)/4$ and combine with \eqref{lem2-relation1} and \eqref{lem2-relation2} 
to obtain the decomposition \eqref{lem2-sym1}.
\end{proof}

\noindent Eventually we explain how to derive an estimate for the second symmetric term in \eqref{bilan-energie}.

\begin{lemma}
\label{lem3}
Let $a,b<0$. Let $u^n \in \mathbb{H}$, and let the sequence $w^n$ be defined on the set of interior indices $\mathbb{I}$ by \eqref{defvwn'}. 
Then there holds:
\begin{align}
\| \, w^n \, \|^2 \, \le& \, \, 2 \, (\alpha^2+\beta^2) \, \left\{ \dfrac{\alpha^2}{4} \, \| \Delta_1 u^n \|_{\ell^2(\mathring{\mathbb{I}})}^2 
\, + \, \dfrac{\beta^2}{4} \, \| \Delta_2 u^n \|_{\ell^2(\mathring{\mathbb{I}})}^2 
\, + \, \dfrac{\alpha^2+\beta^2}{16} \, \Big( \| D_{1,-} D_{2,-} u^n \|_{\ell^2(\mathring{\mathbb{I}})}^2 \right. \notag \\
& \left. \, + \, \| D_{1,-} D_{2,+} u^n \|_{\ell^2(\mathring{\mathbb{I}})}^2 \, + \, \| D_{1,+} D_{2,-} u^n \|_{\ell^2(\mathring{\mathbb{I}})}^2 
\, + \, \| D_{1,+} D_{2,+} u^n \|_{\ell^2(\mathring{\mathbb{I}})}^2 \Big) \right\} \notag \\
& \, - \, \dfrac{\alpha^2 \, \beta^2}{8} \, \sum_{j \ge 1} (\Delta_1 u_{j,0}^n)^2 
\, - \, \dfrac{\alpha^2 \, \beta^2}{8} \, \sum_{k \ge 1} (\Delta_2 u_{0,k}^n)^2 \label{lem3-sym2} \\
& \, - \, \dfrac{\alpha^2 \, \beta^2}{8} \, \sum_{j \ge 1} (D_{2,+} \Delta_1 u_{j,0}^n)^2 
\, - \, \dfrac{\alpha^2 \, \beta^2}{8} \, \sum_{k \ge 1} (D_{1,+} \Delta_2 u_{0,k}^n)^2 \notag \\
& \, - \, \dfrac{\alpha^2 \, (\alpha^2 + \beta^2)}{8} \, \sum_{j \ge 1} (\Delta_1 u_{j,0}^n) \, D_{2,+} \Delta_1 u_{j,0}^n 
\, - \, \dfrac{\alpha^2 \, (\alpha^2 + \beta^2)}{8} \, \sum_{k \ge 1} (\Delta_2 u_{0,k}^n) \, D_{1,+} \Delta_2 u_{0,k}^n \notag \\
& \, + \, \dfrac{(\alpha^2 + \beta^2)}{8} \, \left\{ \beta^2 \, \sum_{j \ge 1} (D_{1,+} D_{2,+} u_{j,0}^n)^2 
\, + \, \alpha^2 \, \sum_{k \ge 1} (D_{1,+} D_{2,+} u_{0,k}^n)^2 \right\} \notag \\
& \, + \, |\alpha|^3 \, |\beta| \, \sum_{j \ge 1} (\Delta_1 u_{j,0}^n) \, D_{2,+} D_{1,0} u_{j,0}^n 
\, + \,|\alpha| \, |\beta|^3 \, \sum_{k \ge 1} (\Delta_2 u_{0,k}^n) \, D_{1,+} D_{2,0} u_{0,k}^n \notag \\
& \, - \, \dfrac{(\alpha^2 + \beta^2)^2}{16} \, (D_{1,+} D_{2,+} u_{0,0}^n)^2 \, .\notag
\end{align}
\end{lemma}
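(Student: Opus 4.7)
The plan is to split $\|w^n\|^2$ according to the norm \eqref{def:norm},
$$
\|w^n\|^2 \, = \, \sum_{j,k \ge 1}(w^n_{j,k})^2 \, + \, \tfrac{1}{2}\sum_{k \ge 1}(w^n_{0,k})^2 \, + \, \tfrac{1}{2}\sum_{j \ge 1}(w^n_{j,0})^2 \, + \, \tfrac{1}{4}(w^n_{0,0})^2 \, ,
$$
and to treat the boundary and corner sums exactly via the extrapolation conditions \eqref{extrapolation}--\eqref{corner}, while bounding the interior sum above by Cauchy--Schwarz. The first two lines of \eqref{lem3-sym2} will come from the interior upper bound, and the remaining lines from the exact boundary and corner contributions.

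The simplification of $w^n$ at the boundary $j=0$ relies on $\Delta_1 u^n_{0,k}=0$ (hence $\Delta_1\Delta_2 u^n_{0,k}=0$ for $k\ge 1$) and $D_{1,0} u^n_{0,k}=D_{1,+}u^n_{0,k}$, both forced by \eqref{extrapolationk} and already observed in the proof of Lemma \ref{lem1}. Substituting these into \eqref{defwjkn'} yields
$$
w^n_{0,k} \, = \, -\tfrac{\beta^2}{2}\Delta_2 u^n_{0,k} \, - \, \alpha\beta \, D_{1,+} D_{2,0} u^n_{0,k} \, , \qquad k \ge 1 \, ,
$$
with the symmetric formula $w^n_{j,0} = -\tfrac{\alpha^2}{2}\Delta_1 u^n_{j,0} - \alpha\beta \, D_{1,0} D_{2,+} u^n_{j,0}$ for $j\ge 1$, and $w^n_{0,0} = -\alpha\beta \, D_{1,+} D_{2,+} u^n_{0,0}$ after using \eqref{corner}. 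I would then square these expressions, weight by the appropriate coefficients from \eqref{def:norm}, and rewrite $D_{1,+} D_{2,0} = D_{1,+} D_{2,+} - \tfrac{1}{2} D_{1,+} \Delta_2$ (together with its mirror on the other boundary) so that the resulting boundary sums reproduce lines 3--7 of \eqref{lem3-sym2} in the prescribed combinations. The corner contribution $\tfrac{1}{4}(w^n_{0,0})^2 = \tfrac{\alpha^2\beta^2}{4}(D_{1,+} D_{2,+} u^n_{0,0})^2$, bounded above by $\tfrac{(\alpha^2+\beta^2)^2}{16}(D_{1,+} D_{2,+} u^n_{0,0})^2$ via $4\alpha^2\beta^2 \le (\alpha^2+\beta^2)^2$, combines with the corner residues of the two boundary sums to produce the last line of \eqref{lem3-sym2}.

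For the interior, I would write $w^n = P + Q$ with $P := -\tfrac{\alpha^2}{2}\Delta_1 u^n - \tfrac{\beta^2}{2}\Delta_2 u^n$ and $Q := -\alpha\beta \, D_{1,0} D_{2,0} u^n + \tfrac{\alpha^2+\beta^2}{8}\Delta_1\Delta_2 u^n$. A tight weighted Cauchy--Schwarz applied to $P$ (with weight $\beta^2/\alpha^2$) gives $P^2 \le \tfrac{\alpha^2(\alpha^2+\beta^2)}{4}(\Delta_1 u^n)^2 + \tfrac{\beta^2(\alpha^2+\beta^2)}{4}(\Delta_2 u^n)^2$, while the operator identities $D_{1,0} D_{2,0} = \tfrac{1}{4}\sum_{\epsilon_1,\epsilon_2} D_{1,\epsilon_1} D_{2,\epsilon_2}$ and $\Delta_1\Delta_2 = \sum_{\epsilon_1,\epsilon_2} \epsilon_1\epsilon_2 \, D_{1,\epsilon_1} D_{2,\epsilon_2}$ allow one to rewrite $Q = \tfrac{(\alpha-\beta)^2}{8}(D_{1,+} D_{2,+} + D_{1,-} D_{2,-})u^n - \tfrac{(\alpha+\beta)^2}{8}(D_{1,+} D_{2,-} + D_{1,-} D_{2,+})u^n$. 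A four-term Cauchy--Schwarz combined with $(\alpha-\beta)^4 + (\alpha+\beta)^4 \le 4(\alpha^2+\beta^2)^2$ bounds $Q^2$ by $\tfrac{(\alpha^2+\beta^2)^2}{8}\sum_{\epsilon_1,\epsilon_2}(D_{1,\epsilon_1} D_{2,\epsilon_2} u^n)^2$. Summing the two bounds over $\mathring{\mathbb{I}}$ produces the first two lines of \eqref{lem3-sym2}.

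The main obstacle will be the algebraic bookkeeping of the boundary step: the expansion of $(w^n_{0,k})^2$ using the split $D_{1,+} D_{2,0} = D_{1,+} D_{2,+} - \tfrac{1}{2} D_{1,+} \Delta_2$ must produce \emph{exactly} the combinations of $(\Delta_2 u^n_{0,k})^2$, $(D_{1,+}\Delta_2 u^n_{0,k})^2$, $(D_{1,+} D_{2,+} u^n_{0,k})^2$, and the cross products with $\Delta_2$ and $D_{1,+} D_{2,0}$ displayed on lines 3--7 of \eqref{lem3-sym2}, not in equivalent alternative forms. Careful sign tracking ($\alpha,\beta < 0$, hence $\alpha\beta > 0$ and $\alpha\beta^3 > 0$) is essential in order to obtain the correct coefficients $|\alpha|^3|\beta|$ and $|\alpha||\beta|^3$ on the mixed boundary sums, and the corner contributions from the three exact computations must be merged carefully so that the coefficient $-\tfrac{(\alpha^2+\beta^2)^2}{16}$ emerges cleanly on the last line of \eqref{lem3-sym2}.
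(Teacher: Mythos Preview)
Your treatment of the boundary and corner cells matches the paper's Step~1 almost verbatim, and the formulas $w^n_{j,0}=-\tfrac{\alpha^2}{2}\Delta_1 u^n_{j,0}-\alpha\beta\,D_{1,0}D_{2,+}u^n_{j,0}$, $w^n_{0,0}=-\alpha\beta\,D_{1,+}D_{2,+}u^n_{0,0}$ are exactly those the paper uses. The gap is in the interior step.

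Your pointwise decomposition $w^n=P+Q$ and the individual bounds on $P^2$ and $Q^2$ are correct, but ``summing the two bounds'' does not give $(P+Q)^2$; you need $(P+Q)^2\le 2P^2+2Q^2$, and then the coefficient in front of $\sum_{\epsilon_1,\epsilon_2}\|D_{1,\epsilon_1}D_{2,\epsilon_2}u^n\|_{\rm o}^2$ comes out as $\tfrac{(\alpha^2+\beta^2)^2}{4}$ rather than the $\tfrac{(\alpha^2+\beta^2)^2}{8}$ required by the first two lines of \eqref{lem3-sym2}. More importantly, a purely pointwise interior estimate can never produce the boundary terms on lines~3--7 with the stated signs. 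For instance, your explicit computation of $\tfrac12\sum_{j\ge1}(w^n_{j,0})^2$ yields $+\tfrac{\alpha^4}{8}\sum_{j\ge1}(\Delta_1 u^n_{j,0})^2$ and $+\tfrac{|\alpha|^3|\beta|}{2}\sum_{j\ge1}(\Delta_1 u^n_{j,0})\,D_{2,+}D_{1,0}u^n_{j,0}$, whereas the lemma displays $-\tfrac{\alpha^2\beta^2}{8}$ and $+|\alpha|^3|\beta|$ respectively. The missing pieces ($-\tfrac{\alpha^2(\alpha^2+\beta^2)}{8}$ on $(\Delta_1 u^n_{j,0})^2$, another $+\tfrac{|\alpha|^3|\beta|}{2}$ on the cross term, the terms with $D_{2,+}\Delta_1 u^n_{j,0}$, etc.) do not come from the boundary cells at all: in the paper they arise as boundary remainders when one integrates by parts the scalar products $\langle\Delta_1\Delta_2 u^n;\Delta_1 u^n\rangle_{\rm o}$, $\langle D_{1,0}D_{2,0}u^n;\Delta_1\Delta_2 u^n\rangle_{\rm o}$ and $\langle D_{1,0}D_{2,0}u^n;\Delta_1 u^n\rangle_{\rm o}$ inside $\|w^n\|_{\rm o}^2$ (Steps~2--3 of the proof). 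Those integrations by parts are what simultaneously sharpen the interior constant to $\tfrac{(\alpha^2+\beta^2)^2}{8}$ and generate the additional negative boundary contributions. Without them you cannot reach the specific right-hand side of \eqref{lem3-sym2}; you would only obtain a weaker inequality with a larger interior constant and boundary terms of the wrong sign.
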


\begin{proof}
We use the definition \eqref{def:norm} of the norm to get:
$$
4 \, \| \, w^n \, \|^2 \, = \, \sum_{j,k \ge 1} \, (2\, w_{j,k}^n)^2 \, + \, 2 \, \sum_{j \ge 1} \, (w_{j,0}^n)^2 
 \, + \, 2 \, \sum_{k \ge 1} \, (w_{0,k}^n)^2  \, + \, (w_{0,0}^n)^2 \, .
$$
We first make the boundary and corner contributions explicit in Step 1 below. We then derive an estimate for the interior sum (namely the sum 
with respect to both indices $j,k \in \N^*$) in Steps 2 and 3. We conclude in Step 4 by collecting all contributions.
\bigskip

\underline{Step 1 (the boundary and corner contributions).} For $j \ge 1$, the boundary condition \eqref{extrapolationj} gives:
$$
\forall \, j \ge 1 \, ,\quad w_{j,0}^n \, = \, - \, \dfrac{\alpha^2}{2} \, \Delta_1 u_{j,0}^n \, - \, \alpha \, \beta \, D_{2,+} D_{1,0} u_{j,0}^n \, ,
$$
and we thus have:
\begin{align*}
2 \, \sum_{j \ge 1} \, (w_{j,0}^n)^2 \, =& \, \dfrac{\alpha^4}{2} \, \sum_{j \ge 1} \, (\Delta_1 u_{j,0}^n)^2 \, + \, 2 \, \alpha^2 \, \beta^2 \, 
\sum_{j \ge 1} \, (D_{2,+} D_{1,0} u_{j,0}^n)^2 \\
& \, + \, 2 \, \alpha^3 \, \beta \, \sum_{j \ge 1} \, \Delta_1 u_{j,0}^n \, D_{2,+} D_{1,0} u_{j,0}^n \, .
\end{align*}
We then use formula \eqref{formule1d} to obtain:
\begin{align*}
2 \, \sum_{j \ge 1} \, (w_{j,0}^n)^2 \, =& \, \dfrac{\alpha^4}{2} \, \sum_{j \ge 1} \, (\Delta_1 u_{j,0}^n)^2 \, + \, 2 \, \alpha^2 \, \beta^2 \, 
\sum_{j \ge 1} \, (D_{1,+} D_{2,+} u_{j,0}^n)^2 \, - \, \dfrac{\alpha^2 \, \beta^2}{2} \, \sum_{j \ge 1} \, (D_{2,+} \Delta_1 u_{j,0}^n)^2 \\
& \, + \, 2 \, \alpha^3 \, \beta \, \sum_{j \ge 1} \, \Delta_1 u_{j,0}^n \, D_{2,+} D_{1,0} u_{j,0}^n 
\, + \, \alpha^2 \, \beta^2 \, (D_{1,+} D_{2,+} u_{0,0}^n)^2 \, .
\end{align*}
There is, of course, a similar expression for the analogous contribution on the other side of the boundary. Recalling that we have 
$w_{0,0}^n=-\alpha \, \beta \, D_{1,+} D_{2,+} u_{0,0}^n$, we end up with the relation:
\begin{align*}
2 \, \sum_{j \ge 1} \, (w_{j,0}^n)^2 \, + \, 2 \, \sum_{k \ge 1} \, (w_{0,k}^n)^2  \, + \, (w_{0,0}^n)^2 \, =& \, 
\dfrac{\alpha^4}{2} \, \sum_{j \ge 1} \, (\Delta_1 u_{j,0}^n)^2 \, + \, \dfrac{\beta^4}{2} \, \sum_{k \ge 1} \, (\Delta_2 u_{0,k}^n)^2 \\
& \, + \, 2 \, \alpha^2 \, \beta^2 \, \left( \sum_{j \ge 1} \, (D_{1,+} D_{2,+} u_{j,0}^n)^2 
\, + \, \sum_{k \ge 1} \, (D_{1,+} D_{2,+} u_{0,k}^n)^2 \right) \\
& \, - \, \dfrac{\alpha^2 \, \beta^2}{2} \, \left( \sum_{j \ge 1} \, (D_{2,+} \Delta_1 u_{j,0}^n)^2 
\, + \, \sum_{k \ge 1} \, (D_{1,+} \Delta_2 u_{0,k}^n)^2 \right) \\
& \, + \, 2 \, \alpha^3 \, \beta \, \sum_{j \ge 1} \, \Delta_1 u_{j,0}^n \, D_{2,+} D_{1,0} u_{j,0}^n \\
& \, + \, 2 \, \alpha \, \beta^3 \, \sum_{k \ge 1} \, \Delta_2 u_{0,k}^n \, D_{1,+} D_{2,0} u_{0,k}^n \\
& \, + \, 3 \, \alpha^2 \, \beta^2 \, (D_{1,+} D_{2,+} u_{0,0}^n)^2 \, .
\end{align*}
In particular, $\alpha$ and $\beta$ being both negative, the product $\alpha \, \beta$ is positive and we easily obtain our first estimate:
\begin{align}
2 \, \sum_{j \ge 1} \, (w_{j,0}^n)^2 \, + \, 2 \, \sum_{k \ge 1} \, (w_{0,k}^n)^2  \, + \, (w_{0,0}^n)^2 \, \le& \, 
\dfrac{\alpha^4}{2} \, \sum_{j \ge 1} \, (\Delta_1 u_{j,0}^n)^2 \, + \, \dfrac{\beta^4}{2} \, \sum_{k \ge 1} \, (\Delta_2 u_{0,k}^n)^2 \notag \\
& \, + \, \dfrac{(\alpha^2+\beta^2)^2}{2} \, \left( \sum_{j \ge 1} \, (D_{1,+} D_{2,+} u_{j,0}^n)^2 
\, + \, \sum_{k \ge 1} \, (D_{1,+} D_{2,+} u_{0,k}^n)^2 \right) \notag \\
& \, - \, \dfrac{\alpha^2 \, \beta^2}{2} \, \left( \sum_{j \ge 1} \, (D_{2,+} \Delta_1 u_{j,0}^n)^2 
\, + \, \sum_{k \ge 1} \, (D_{1,+} \Delta_2 u_{0,k}^n)^2 \right) \label{lem3-relation1} \\
& \, + \, 2 \, \alpha^3 \, \beta \, \sum_{j \ge 1} \, \Delta_1 u_{j,0}^n \, D_{2,+} D_{1,0} u_{j,0}^n \notag \\
& \, + \, 2 \, \alpha \, \beta^3 \, \sum_{k \ge 1} \, \Delta_2 u_{0,k}^n \, D_{1,+} D_{2,0} u_{0,k}^n \notag \\
& \, + \, \dfrac{3}{2} \, \alpha \, \beta \, (\alpha^2+\beta^2) \, (D_{1,+} D_{2,+} u_{0,0}^n)^2 \, . \notag
\end{align}
We now turn to the interior contribution.
\bigskip

\underline{Step 2 (the interior contribution).} It is convenient to introduce the short-hand notation $\| \cdot \|_{\rm o}$ rather than 
$\| \cdot \|_{\ell^2(\mathring{\mathbb{I}})}$ for the $\ell^2$ norm on the set $\mathring{\mathbb{I}} =\N^* \times \N^*$ (the set of interior 
indices). The corresponding scalar product is denoted $\langle \, ; \, \rangle_{\rm o}$. In other words, we have:
$$
\langle U \, ; \, V \rangle_{\rm o} \, = \, \sum_{j,k \ge 1} \, U_{j,k} \, V_{j,k} \, .
$$
From the definition \eqref{defwjkn'}, we thus compute the expression:
\begin{align*}
4 \, \| \, w^n \, \|_{\rm o}^2 \, =& \, \, \alpha^4 \, \| \Delta_1 u^n \|_{\rm o}^2 \, + \, \beta^4 \, \| \Delta_2 u^n \|_{\rm o}^2 
\, + \, 2 \, \alpha^2 \, \beta^2 \, \langle \Delta_1 u^n \, ; \, \Delta_2 u^n \rangle_{\rm o} \\
& + \, 4 \, \alpha^2 \, \beta^2 \, \| D_{1,0} D_{2,0} u^n \|_{\rm o}^2 \, + \, \dfrac{(\alpha^2+\beta^2)^2}{16} \, \| \Delta_1 \Delta_2 u^n \|_{\rm o}^2 \\
& - \, \dfrac{\alpha^2+\beta^2}{2} \, \langle \Delta_1 \Delta_2 u^n \, ; \, \alpha^2 \, \Delta_1 u^n \, + \, \beta^2 \, \Delta_2 u^n \rangle_{\rm o} \\
& + \, 4 \, \alpha \, \beta \, \langle D_{1,0} \, D_{2,0} u^n \, ; \, \alpha^2 \, \Delta_1 u^n \, + \, \beta^2 \, \Delta_2 u^n \rangle_{\rm o} \\
& - \, (\alpha^2+\beta^2) \, \alpha \, \beta \, \langle D_{1,0} \, D_{2,0} u^n \, ; \, \Delta_1 \Delta_2 u^n \rangle_{\rm o} \, .
\end{align*}
Some of our arguments below are borrowed from our previous work \cite{BC1}. For instance, in the first line on the right-hand side, we use the 
inequality:
$$
2 \, \langle \Delta_1 u^n \, ; \, \Delta_2 u^n \rangle_{\rm o} \, \le \,  \| \Delta_1 u^n \|_{\rm o}^2 \, + \, \| \Delta_2 u^n \|_{\rm o}^2 \, ,
$$
while in the second line of the right-hand side, we use the inequality:
$$
4 \, \alpha^2 \, \beta^2 \, \le \, (\alpha^2+\beta^2)^2 \, ,
$$
which gives:
\begin{align*}
4 \, \| \, w^n \, \|_{\rm o}^2 \, \le & \, \, (\alpha^2 + \beta^2) \, 
\Big( \alpha^2 \, \| \Delta_1 u^n \|_{\rm o}^2 \, + \, \beta^2 \, \| \Delta_2 u^n \|_{\rm o}^2 \Big) 
\, + \, (\alpha^2+\beta^2)^2 \, \left( \| D_{1,0} D_{2,0} u^n \|_{\rm o}^2 \, + \, \dfrac{1}{16} \, \| \Delta_1 \Delta_2 u^n \|_{\rm o}^2 \right) \\
& - \, \dfrac{\alpha^2+\beta^2}{2} \, \langle \Delta_1 \Delta_2 u^n \, ; \, \alpha^2 \, \Delta_1 u^n \, + \, \beta^2 \, \Delta_2 u^n \rangle_{\rm o} 
\, + \, 4 \, \alpha \, \beta \, \langle D_{1,0} \, D_{2,0} u^n \, ; \, \alpha^2 \, \Delta_1 u^n \, + \, \beta^2 \, \Delta_2 u^n \rangle_{\rm o} \\
& - \, (\alpha^2+\beta^2) \, \alpha \, \beta \, \langle D_{1,0} \, D_{2,0} u^n \, ; \, \Delta_1 \Delta_2 u^n \rangle_{\rm o} \, .
\end{align*}

We now use twice the first equality of \eqref{formule1d} to expand the norm $\| D_{1,0} D_{2,0} u^n \|_{\rm o}^2$:
\begin{align*}
\| D_{1,0} D_{2,0} u^n \|_{\rm o}^2 \, =& \, \dfrac{1}{4} \, \Big( 
\| D_{1,-} D_{2,-} u^n \|_{\rm o}^2 + \| D_{1,-} D_{2,+} u^n \|_{\rm o}^2 + \| D_{1,+} D_{2,-} u^n \|_{\rm o}^2 + \| D_{1,+} D_{2,+} u^n \|_{\rm o}^2 \Big) \\
&\, - \, \dfrac{1}{8} \, \Big( \| D_{1,-} \Delta_2 u^n \|_{\rm o}^2 + \| D_{1,+} \Delta_2 u^n \|_{\rm o}^2 + 
\| D_{2,-} \Delta_1 u^n \|_{\rm o}^2 + \| D_{2,+} \Delta_1 u^n \|_{\rm o}^2 \Big) \\
&\, + \, \dfrac{1}{16} \, \| \Delta_1 \Delta_2 u^n \|_{\rm o}^2 \, ,
\end{align*}
and this expression is substituted in the right-hand side of our previous estimate for $4 \, \| \, w^n \, \|_{\rm o}^2$. We thus obtain our first 
preliminary estimate:
\begin{align}
4 \, \| \, w^n \, \|_{\rm o}^2 \, \le& \, \, 
(\alpha^2 + \beta^2) \, \Big\{ \alpha^2 \, \| \Delta_1 u^n \|_{\rm o}^2 \, + \, \beta^2 \, \| \Delta_2 u^n \|_{\rm o}^2 \label{lem3-estim1} \\
& \, + \, \dfrac{(\alpha^2+\beta^2)}{4} \, \Big( \| D_{1,-} D_{2,-} u^n \|_{\rm o}^2 + \| D_{1,-} D_{2,+} u^n \|_{\rm o}^2 
+ \| D_{1,+} D_{2,-} u^n \|_{\rm o}^2 + \| D_{1,+} D_{2,+} u^n \|_{\rm o}^2 \Big) \Big\} \notag \\
& \, + \, \mathcal{A} \, ,\notag
\end{align}
where the quantity $\mathcal{A}$ is defined by:
\begin{align}
\mathcal{A} \, :=& \, \dfrac{(\alpha^2+\beta^2)^2}{8} \, \| \Delta_1 \Delta_2 u^n \|_{\rm o}^2 \notag \\
& \, - \, \dfrac{(\alpha^2+\beta^2)^2}{8} \, \Big( \| D_{1,-} \Delta_2 u^n \|_{\rm o}^2 + \| D_{1,+} \Delta_2 u^n \|_{\rm o}^2 + 
\| D_{2,-} \Delta_1 u^n \|_{\rm o}^2 + \| D_{2,+} \Delta_1 u^n \|_{\rm o}^2 \Big) \label{lem3-defA} \\
& \, - \, \dfrac{\alpha^2+\beta^2}{2} \, \langle \Delta_1 \Delta_2 u^n \, ; \, \alpha^2 \, \Delta_1 u^n \, + \, \beta^2 \, \Delta_2 u^n \rangle_{\rm o} 
\, - \, (\alpha^2+\beta^2) \, \alpha \, \beta \, \langle D_{1,0} \, D_{2,0} u^n \, ; \, \Delta_1 \Delta_2 u^n \rangle_{\rm o} \notag \\
& \, + \, 4 \, \alpha \, \beta \, \langle D_{1,0} \, D_{2,0} u^n \, ; \, \alpha^2 \, \Delta_1 u^n \, + \, \beta^2 \, \Delta_2 u^n \rangle_{\rm o} \, .\notag
\end{align}

We now focus on estimating the quantity $\mathcal{A}$. We use the one-dimensional formula:
$$
\sum_{j \ge 1} \, (\Delta_1 U_j) \, U_j \, = \, - \, \dfrac{1}{2} \, \sum_{j \ge 1} \, (D_{1,-} U_j)^2 \, \, - \, \dfrac{1}{2} \, \sum_{j \ge 1} \, (D_{1,+} U_j)^2 
\, - \, \dfrac{1}{2} \, (U_1-U_0)^2 \, - \, U_0 \, (U_1-U_0) \, ,
$$
and thus integrate by parts the two scalar products $\langle \Delta_1 \Delta_2 u^n \, ; \, \Delta_1 u^n \rangle_{\rm o}$ and 
$\langle \Delta_1 \Delta_2 u^n \, ; \, \Delta_2 u^n \rangle_{\rm o}$. We obtain the equivalent expression:
\begin{align*}
\mathcal{A} \, =& \, \dfrac{(\alpha^2+\beta^2)^2}{8} \, \| \Delta_1 \Delta_2 u^n \|_{\rm o}^2 
\, + \, \dfrac{(\alpha^2+\beta^2)}{8} \, \left\{ (\alpha^2-\beta^2) \, \Big( \| D_{2,-} \Delta_1 u^n \|_{\rm o}^2 + \| D_{2,+} \Delta_1 u^n \|_{\rm o}^2 \Big) \right. \\
& \qquad \qquad  \qquad \qquad \qquad \qquad \qquad \qquad 
\left. \, + \, (\beta^2-\alpha^2) \, \Big( \| D_{1,-} \Delta_2 u^n \|_{\rm o}^2 + \| D_{1,+} \Delta_2 u^n \|_{\rm o}^2 \Big) \right\} \\
&\, - \, (\alpha^2+\beta^2) \, \alpha \, \beta \, \langle D_{1,0} \, D_{2,0} u^n \, ; \, \Delta_1 \Delta_2 u^n \rangle_{\rm o} 
\, + \, 4 \, \alpha \, \beta \, \langle D_{1,0} \, D_{2,0} u^n \, ; \, \alpha^2 \, \Delta_1 u^n \, + \, \beta^2 \, \Delta_2 u^n \rangle_{\rm o} \\
& \, + \, \dfrac{\alpha^2 \, (\alpha^2+\beta^2)}{4} \, \sum_{j \ge 1} \, (D_{2,+} \Delta_1 u^n_{j,0})^2 \, + \, 
\dfrac{\beta^2 \, (\alpha^2+\beta^2)}{4} \, \sum_{k \ge 1} \, (D_{1,+} \Delta_2 u^n_{0,k})^2 \\
& \, + \, \dfrac{\alpha^2 \, (\alpha^2+\beta^2)}{2} \, \sum_{j \ge 1} \, (\Delta_1 u^n_{j,0}) \, D_{2,+} \Delta_1 u^n_{j,0} \, + \, 
\dfrac{\beta^2 \, (\alpha^2+\beta^2)}{2} \, \sum_{k \ge 1} \, (\Delta_2 u^n_{0,k}) \, D_{1,+} \Delta_2 u^n_{0,k} \, .
\end{align*}

We now rewrite the scalar product $\langle D_{1,0} \, D_{2,0} u^n \, ; \, \Delta_1 \Delta_2 u^n \rangle_{\rm o}$ by using the formula:
$$
\langle D_{2,0} \, U \, ; \, \Delta_2 \, V \rangle_{\rm o} \, + \, \langle D_{2,0} \, V \, ; \, \Delta_2 \, U \rangle_{\rm o} 
\, = \, -\sum_{j \ge 1} \, (D_{2,+} U_{j,0}) \, D_{2,+} V_{j,0} \, ,
$$
which gives, after computing the resulting sum with respect to $j$ in yet another telescopic way:
$$
\langle D_{1,0} \, D_{2,0} u^n \, ; \, \Delta_1 \Delta_2 u^n \rangle_{\rm o} \, = \, 
- \, \langle D_{1,0} \, \Delta_2 u^n \, ; \, D_{2,0} \, \Delta_1 u^n \rangle_{\rm o} \, + \, \dfrac{1}{2} \, (D_{1,+} D_{2,+} u^n_{0,0})^2 \, .
$$
We have thus obtained the expression:
\begin{align*}
\mathcal{A} \, =& \, \dfrac{(\alpha^2+\beta^2)^2}{8} \, \| \Delta_1 \Delta_2 u^n \|_{\rm o}^2 
\, + \, \dfrac{(\alpha^2+\beta^2)}{8} \, \left\{ (\alpha^2-\beta^2) \, \Big( \| D_{2,-} \Delta_1 u^n \|_{\rm o}^2 + \| D_{2,+} \Delta_1 u^n \|_{\rm o}^2 \Big) \right. \\
& \qquad \qquad \qquad \qquad \qquad \qquad \qquad \qquad 
\left. \, + \, (\beta^2-\alpha^2) \, \Big( \| D_{1,-} \Delta_2 u^n \|_{\rm o}^2 + \| D_{1,+} \Delta_2 u^n \|_{\rm o}^2 \Big) \right\} \\
&\, + \, (\alpha^2+\beta^2) \, \alpha \, \beta \, \langle D_{1,0} \, \Delta_2 u^n \, ; \, D_{2,0} \, \Delta_1 u^n \rangle_{\rm o} 
\, + \, 4 \, \alpha \, \beta \, \langle D_{1,0} \, D_{2,0} u^n \, ; \, \alpha^2 \, \Delta_1 u^n \, + \, \beta^2 \, \Delta_2 u^n \rangle_{\rm o} \\
& \, + \, \dfrac{\alpha^2 \, (\alpha^2+\beta^2)}{4} \, \sum_{j \ge 1} \, (D_{2,+} \Delta_1 u^n_{j,0})^2 \, + \, 
\dfrac{\beta^2 \, (\alpha^2+\beta^2)}{4} \, \sum_{k \ge 1} \, (D_{1,+} \Delta_2 u^n_{0,k})^2 \\
& \, + \, \dfrac{\alpha^2 \, (\alpha^2+\beta^2)}{2} \, \sum_{j \ge 1} \, (\Delta_1 u^n_{j,0}) \, D_{2,+} \Delta_1 u^n_{j,0} \, + \, 
\dfrac{\beta^2 \, (\alpha^2+\beta^2)}{2} \, \sum_{k \ge 1} \, (\Delta_2 u^n_{0,k}) \, D_{1,+} \Delta_2 u^n_{0,k} \\
& \, - \, \alpha \, \beta \, \dfrac{(\alpha^2+\beta^2)}{2} \, (D_{1,+} D_{2,+} u^n_{0,0})^2 \, .
\end{align*}
Then, as in \cite{BC1}, we focus on the first scalar product in the third line of the right-hand side of the latter expression for $\mathcal{A}$. 
We use Cauchy-Schwarz inequality as well as the inequality: for $a_1,\, a_2,\, a_3,\, a_4\, \in \mathbb{R}$,
\begin{equation}
\label{inegalitea1234}
a_1 \, a_2 \, a_3 \, a_4 \, \le \, \dfrac{1}{4} \, (a_1^2+a_2^2) \, (a_3^2+a_4^2) \, ,
\end{equation}
and we end up with the inequality:
\begin{align*}
\mathcal{A} \, \le& \, \dfrac{(\alpha^2+\beta^2)^2}{8} \, \| \Delta_1 \Delta_2 u^n \|_{\rm o}^2 
\, + \, \dfrac{(\alpha^2+\beta^2)}{8} \, \left\{ (\alpha^2-\beta^2) \, \Big( \| D_{2,-} \Delta_1 u^n \|_{\rm o}^2 + \| D_{2,+} \Delta_1 u^n \|_{\rm o}^2 \Big) \right. \\
& \qquad \qquad \qquad \qquad \qquad \qquad \qquad \qquad 
\left. \, + \, (\beta^2-\alpha^2) \, \Big( \| D_{1,-} \Delta_2 u^n \|_{\rm o}^2 + \| D_{1,+} \Delta_2 u^n \|_{\rm o}^2 \Big) \right\} \\
&\, + \, \dfrac{(\alpha^2+\beta^2)^2}{4} \, \left( \| D_{1,0} \, \Delta_2 u^n \|_{\rm o}^2 \, + \, \| D_{2,0} \, \Delta_1 u^n \|_{\rm o}^2 \right) 
\, + \, 4 \, \alpha \, \beta \, \langle D_{1,0} \, D_{2,0} u^n \, ; \, \alpha^2 \, \Delta_1 u^n \, + \, \beta^2 \, \Delta_2 u^n \rangle_{\rm o} \\
& \, + \, \dfrac{\alpha^2 \, (\alpha^2+\beta^2)}{4} \, \sum_{j \ge 1} \, (D_{2,+} \Delta_1 u^n_{j,0})^2 \, + \, 
\dfrac{\beta^2 \, (\alpha^2+\beta^2)}{4} \, \sum_{k \ge 1} \, (D_{1,+} \Delta_2 u^n_{0,k})^2 \\
& \, + \, \dfrac{\alpha^2 \, (\alpha^2+\beta^2)}{2} \, \sum_{j \ge 1} \, (\Delta_1 u^n_{j,0}) \, D_{2,+} \Delta_1 u^n_{j,0} \, + \, 
\dfrac{\beta^2 \, (\alpha^2+\beta^2)}{2} \, \sum_{k \ge 1} \, (\Delta_2 u^n_{0,k}) \, D_{1,+} \Delta_2 u^n_{0,k} \\
& \, - \, \alpha \, \beta \, \dfrac{(\alpha^2+\beta^2)}{2} \, (D_{1,+} D_{2,+} u^n_{0,0})^2 \, .
\end{align*}
We apply once again the first equality of \eqref{formule1d} and expand the two norms $\| D_{1,0} \, \Delta_2 u^n \|_{\rm o}^2$ and 
$\| D_{2,0} \, \Delta_1 u^n \|_{\rm o}^2$. After simplifying with other terms, we obtain the following estimate for the quantity 
$\mathcal{A}$ defined in \eqref{lem3-defA}:
\begin{align}
\mathcal{A} \, \le& \, \alpha^2 \, \mathcal{A}_1 \, + \, \beta^2 \, \mathcal{A}_2 
\, + \, \dfrac{\alpha^2 \, (\alpha^2+\beta^2)}{4} \, \sum_{j \ge 1} \, (D_{2,+} \Delta_1 u^n_{j,0})^2 \, + \, 
\dfrac{\beta^2 \, (\alpha^2+\beta^2)}{4} \, \sum_{k \ge 1} \, (D_{1,+} \Delta_2 u^n_{0,k})^2 \notag \\
& \, + \, \dfrac{\alpha^2 \, (\alpha^2+\beta^2)}{2} \, \sum_{j \ge 1} \, (\Delta_1 u^n_{j,0}) \, D_{2,+} \Delta_1 u^n_{j,0} \, + \, 
\dfrac{\beta^2 \, (\alpha^2+\beta^2)}{2} \, \sum_{k \ge 1} \, (\Delta_2 u^n_{0,k}) \, D_{1,+} \Delta_2 u^n_{0,k} \label{lem3-expressionA} \\
& \, - \, \alpha \, \beta \, \dfrac{(\alpha^2+\beta^2)}{2} \, (D_{1,+} D_{2,+} u^n_{0,0})^2 \, ,\notag
\end{align}
where the expression of the terms $\mathcal{A}_1$ and $\mathcal{A}_2$ is the following (compare with \cite{BC1} where the analogous terms are 
denoted $B_1$ and $B_2$):
\begin{subequations}
\label{lem3-defA12}
\begin{align}
\mathcal{A}_1 \, :=& \, \dfrac{(\alpha^2+\beta^2)}{4} \, \Big( \| D_{2,-} \Delta_1 u^n \|_{\rm o}^2 + \| D_{2,+} \Delta_1 u^n \|_{\rm o}^2 \Big) 
\, + \, 4 \, \alpha \, \beta \, \langle D_{1,0} D_{2,0} u^n \, ; \, \Delta_1 u^n \rangle_{\rm o} \, , \label{lem3-defA1} \\
\mathcal{A}_2 \, :=& \, \dfrac{(\alpha^2+\beta^2)}{4} \, \Big( \| D_{1,-} \Delta_2 u^n \|_{\rm o}^2 + \| D_{1,+} \Delta_2 u^n \|_{\rm o}^2 \Big) 
\, + \, 4 \, \alpha \, \beta \, \langle D_{1,0} D_{2,0} u^n \, ; \, \Delta_2 u^n \rangle_{\rm o} \, . \label{lem3-defA2}
\end{align}
\end{subequations}
The third step of the proof is to estimate both terms $\mathcal{A}_1$ and $\mathcal{A}_2$.
\bigskip

\underline{Step 3 (the interior contribution).} Following \cite{BC1}, we introduce the averaging operators $\mathbb{A}_1$ and $\mathbb{A}_2$ 
defined by:
\begin{align*}
(\mathbb{A}_{1,+} V)_{j,k} \, &:= \, \dfrac{V_{j,k}+V_{j+1,k}}{2} \, ,\quad (\mathbb{A}_{2,+} V)_{j,k} \, := \, \dfrac{V_{j,k}+V_{j,k+1}}{2} \, ,\\
(\mathbb{A}_{1,-} V)_{j,k} \, &:= \, \dfrac{V_{j-1,k}+V_{j,k}}{2} \, ,\quad (\mathbb{A}_{2,-} V)_{j,k} \, := \, \dfrac{V_{j,k-1}+V_{j,k}}{2} \, ,
\end{align*}
that verify, for instance, $D_{2,0}=D_{2,+} \, \mathbb{A}_{2,-}$. We then use the following formula:
\begin{equation*}
(D_{2,0}V_{j,k}) \, W_{j,k} \, - \, (D_{2,+}V_{j,k}) \, (\mathbb{A}_{2,+} W_{j,k}) \, = \, 
\dfrac{1}{2} \, (D_{2,-}V_{j,k}) \, W_{j,k} \, - \, \dfrac{1}{2} \, (D_{2,+}V_{j,k}) \, W_{j,k+1} \, , 
\end{equation*}
where the right-hand side is telescopic with respect to $k$. We thus obtain the equivalent expression:
\begin{align*}
\langle D_{1,0} D_{2,0} u^n \, ; \, \Delta_1 u^n \rangle_{\rm o} \, =& \, 
\langle D_{2,+} D_{1,0} u^n \, ; \, \mathbb{A}_{2,+} \Delta_1 u^n \rangle_{\rm o} 
\, + \, \dfrac{1}{2} \, \sum_{j \ge 1} \, (\Delta_1 u^n_{j,1}) \, D_{2,+} D_{1,0} u^n_{j,0} \\
=& \, \langle D_{2,+} D_{1,0} u^n \, ; \, \mathbb{A}_{2,+} \Delta_1 u^n \rangle_{\rm o} 
\, + \, \dfrac{1}{2} \, \sum_{j \ge 1} \, (\Delta_1 u^n_{j,0} ) \, D_{2,+} D_{1,0} u^n_{j,0} \, - \, \dfrac{1}{4} \, (D_{1,+} D_{2,+} u^n_{0,0})^2 \, ,
\end{align*}
where we write $u_{j,1}^n\,=\,D_{2,+}\,u_{j,0}^n\,+\,u_{j,0}^n$ in the first sum on the right-hand side. Starting from the definition \eqref{lem3-defA1}, 
we thus obtain the expression:
\begin{multline*}
\mathcal{A}_1 \, = \, \dfrac{(\alpha^2+\beta^2)}{4} \, \Big( \| D_{2,-} \Delta_1 u^n \|_{\rm o}^2 + \| D_{2,+} \Delta_1 u^n \|_{\rm o}^2 \Big) 
\, + \, 4 \, \alpha \, \beta \, \langle D_{2,+} D_{1,0} u^n \, ; \, \mathbb{A}_{2,+} \Delta_1 u^n \rangle_{\rm o} \\
\, + \, 2 \, \alpha \, \beta \, \sum_{j \ge 1} \, (\Delta_1 u^n_{j,0} ) \, D_{2,+} D_{1,0} u^n_{j,0} 
\, - \, \alpha \, \beta \, (D_{1,+} D_{2,+} u^n_{0,0})^2 \, .
\end{multline*}
We then apply Cauchy-Schwarz inequality to the scalar product in the first line and we use again inequality \eqref{inegalitea1234} to get:
\begin{multline*}
\mathcal{A}_1 \, \le \, \dfrac{(\alpha^2+\beta^2)}{4} \, \Big( \| D_{2,-} \Delta_1 u^n \|_{\rm o}^2 + \| D_{2,+} \Delta_1 u^n \|_{\rm o}^2 
+ 4 \, \| D_{2,+} D_{1,0} u^n \|_{\rm o}^2 + 4 \, \| \mathbb{A}_{2,+} \Delta_1 u^n \|_{\rm o}^2 \Big) \\
\, + \, 2 \, \alpha \, \beta \, \sum_{j \ge 1} \, (\Delta_1 u^n_{j,0} ) \, D_{2,+} D_{1,0} u^n_{j,0} 
\, - \, \alpha \, \beta \, (D_{1,+} D_{2,+} u^n_{0,0})^2 \, .
\end{multline*}
By shifting indices in the norm $\| D_{2,-} \Delta_1 u^n \|_{\rm o}^2$, we therefore have:
\begin{multline}
\label{premiereestimationA1}
\mathcal{A}_1 \, \le \, \dfrac{(\alpha^2+\beta^2)}{4} \, \Big( 2 \, \| D_{2,+} \Delta_1 u^n \|_{\rm o}^2 + 4 \, \| D_{2,+} D_{1,0} u^n \|_{\rm o}^2 
+ 4 \, \| \mathbb{A}_{2,+} \Delta_1 u^n \|_{\rm o}^2 \Big) \\
\, + \, \dfrac{(\alpha^2+\beta^2)}{4} \, \sum_{j \ge 1} \, (D_{2,+} \Delta_1 u^n_{j,0})^2 
\, + \, 2 \, \alpha \, \beta \, \sum_{j \ge 1} \, (\Delta_1 u^n_{j,0}) \, D_{2,+} D_{1,0} u^n_{j,0} 
\, - \, \alpha \, \beta \, (D_{1,+} D_{2,+} u^n_{0,0})^2 \, .
\end{multline}
We then expand the two norms $\| D_{2,+} D_{1,0} u^n \|_{\rm o}$ and $\| \mathbb{A}_{2,+} \Delta_1 u^n \|_{\rm o}$ as follows. We 
use again the first identity of \eqref{formule1d} for $\| D_{2,+} D_{1,0} u^n \|_{\rm o}^2$ and a straightforward computation for the norm 
$\| \mathbb{A}_{2,+} \Delta_1 u^n \|_{\rm o}^2$:
\begin{align*}
4 \, \| D_{2,+} D_{1,0} u^n \|_{\rm o}^2 \, &= \, 2 \, \| D_{1,-} D_{2,+} u^n \|_{\rm o}^2 \, + \, 2 \, \| D_{1,+} D_{2,+} u^n \|_{\rm o}^2 
\, - \, \| D_{2,+} \Delta_1 u^n \|_{\rm o}^2 \, ,\\
4 \, \| \mathbb{A}_{2,+} \Delta_1 u^n \|_{\rm o}^2 \, &= \, 4 \, \| \Delta_1 u^n \|_{\rm o}^2 \, - \, \| D_{2,+} \Delta_1 u^n \|_{\rm o}^2 
\, - \, 2 \, \sum_{j \ge 1} \, (\Delta_1 u^n_{j,1})^2 \, .
\end{align*}
By substituting the previous two relations in the right-hand side of \eqref{premiereestimationA1}, and by further expanding $\Delta_1 u^n_{j,1} 
=\Delta_1 u^n_{j,0} +D_{2,+}\Delta_1 u^n_{j,0}$, we obtain the estimate:
\begin{align*}
\mathcal{A}_1 \, \le& \, \dfrac{(\alpha^2+\beta^2)}{2} \, \Big( \| D_{1,-} D_{2,+} u^n \|_{\rm o}^2 \, + \, \| D_{1,+} D_{2,+} u^n \|_{\rm o}^2 
\, + \, 2 \, \| \Delta_1 u^n \|_{\rm o}^2 \Big) \, - \, \dfrac{(\alpha^2+\beta^2)}{2} \, \sum_{j \ge 1} \, (\Delta_1 u^n_{j,0})^2 \\
&- \, \dfrac{(\alpha^2+\beta^2)}{4} \, \sum_{j \ge 1} \, (D_{2,+} \Delta_1 u^n_{j,0})^2 
\, - \, (\alpha^2+\beta^2) \, \sum_{j \ge 1} \, (\Delta_1 u^n_{j,0}) \, D_{2,+} \Delta_1 u^n_{j,0} \\
&+ \, 2 \, \alpha \, \beta \, \sum_{j \ge 1} \, (\Delta_1 u^n_{j,0}) \, D_{2,+} D_{1,0} u^n_{j,0} 
\, - \, \alpha \, \beta \, (D_{1,+} D_{2,+} u^n_{0,0})^2 \, .
\end{align*}
Combining with the analogous estimate for $\mathcal{A}_2$, we end up after a few simplifications with:
\begin{align}
\alpha^2 \, \mathcal{A}_1 \, +& \, \beta^2 \, \mathcal{A}_2 \notag \\
\le& \, (\alpha^2+\beta^2) \, \Big( \alpha^2 \, \| \Delta_1 u^n \|_{\rm o}^2 \, + \, \beta^2 \, \| \Delta_2 u^n \|_{\rm o}^2 \Big) \notag \\
&+ \, \dfrac{(\alpha^2+\beta^2)^2}{4} \, \Big( \| D_{1,-} D_{2,-} u^n \|_{\rm o}^2 \, + \, \| D_{1,-} D_{2,+} u^n \|_{\rm o}^2 
\, + \, \| D_{1,+} D_{2,-} u^n \|_{\rm o}^2 \, + \, \| D_{1,+} D_{2,+} u^n \|_{\rm o}^2 \Big) \notag \\
&- \, \dfrac{\alpha^2 \, (\alpha^2+\beta^2)}{4} \, \sum_{j \ge 1} \, (D_{2,+} \Delta_1 u^n_{j,0})^2 
\, - \, \dfrac{\beta^2 \, (\alpha^2+\beta^2)}{4} \, \sum_{k \ge 1} \, (D_{1,+} \Delta_2 u^n_{0,k})^2 \notag \\
&- \, \alpha^2 \, (\alpha^2+\beta^2) \, \sum_{j \ge 1} \, (\Delta_1 u^n_{j,0}) \, D_{2,+} \Delta_1 u^n_{j,0} 
\, - \, \beta^2 \, (\alpha^2+\beta^2) \, \sum_{k \ge 1} \, (\Delta_2 u^n_{0,k}) \, D_{1,+} \Delta_2 u^n_{0,k} \label{estim-lem3-intermediaire} \\
&- \, \dfrac{\alpha^2 \, (\alpha^2+\beta^2)}{2} \, \sum_{j \ge 1} \, (\Delta_1 u^n_{j,0})^2 
\, - \, \dfrac{\beta^2 \, (\alpha^2+\beta^2)}{2} \, \sum_{k \ge 1} \, (\Delta_2 u^n_{0,k})^2 \notag \\
&- \, \dfrac{\alpha^2 \, (\alpha^2+\beta^2)}{2} \, \sum_{j \ge 1} \, (D_{1,+} D_{2,+} u^n_{j,0})^2 
\, - \, \dfrac{\beta^2 \, (\alpha^2+\beta^2)}{2} \, \sum_{k \ge 1} \, (D_{1,+} D_{2,+} u^n_{0,k})^2 \notag \\
&+ \, 2 \, \alpha^3 \, \beta \, \sum_{j \ge 1} \, (\Delta_1 u^n_{j,0}) \, D_{2,+} D_{1,0} u^n_{j,0} 
\, + \, 2 \, \alpha \, \beta^3 \, \sum_{k \ge 1} \, (\Delta_2 u^n_{0,k}) \, D_{1,+} D_{2,0} u^n_{0,k} \notag \\
&- \, \alpha \, \beta \, (\alpha^2+\beta^2) \, (D_{1,+} D_{2,+} u^n_{0,0})^2 
\, - \, \dfrac{(\alpha^2+\beta^2)^2}{4} \, (D_{1,+} D_{2,+} u^n_{0,0})^2 \, .\notag
\end{align}
\bigskip

\underline{Step 4 (conclusion).} We first use the estimate \eqref{estim-lem3-intermediaire} in the estimate \eqref{lem3-expressionA} for 
the quantity $\mathcal{A}$ that is defined in \eqref{lem3-defA}. The quantity $\mathcal{A}$ arises in the right-hand side of the estimate 
\eqref{lem3-estim1}. We obtain in this way an estimate for the interior norm $4 \, \| \, w^n \, \|_{\rm o}^2$. This estimate reads:
\begin{align*}
4 \, \| \, w^n \, \|_{\rm o}^2 \, \le& \, 2 \, (\alpha^2+\beta^2) \, 
\Big( \alpha^2 \, \| \Delta_1 u^n \|_{\rm o}^2 \, + \, \beta^2 \, \| \Delta_2 u^n \|_{\rm o}^2 \Big) \\
&+ \, \dfrac{(\alpha^2+\beta^2)^2}{2} \, \Big( \| D_{1,-} D_{2,-} u^n \|_{\rm o}^2 \, + \, \| D_{1,-} D_{2,+} u^n \|_{\rm o}^2 
\, + \, \| D_{1,+} D_{2,-} u^n \|_{\rm o}^2 \, + \, \| D_{1,+} D_{2,+} u^n \|_{\rm o}^2 \Big) \\
&- \, \dfrac{\alpha^2 \, (\alpha^2+\beta^2)}{2} \, \sum_{j \ge 1} \, (\Delta_1 u^n_{j,0}) \, D_{2,+} \Delta_1 u^n_{j,0} 
\, - \, \dfrac{\beta^2 \, (\alpha^2+\beta^2)}{2} \, \sum_{k \ge 1} \, (\Delta_2 u^n_{0,k}) \, D_{1,+} \Delta_2 u^n_{0,k} \\
&- \, \dfrac{\alpha^2 \, (\alpha^2+\beta^2)}{2} \, \sum_{j \ge 1} \, (\Delta_1 u^n_{j,0})^2 
\, - \, \dfrac{\beta^2 \, (\alpha^2+\beta^2)}{2} \, \sum_{k \ge 1} \, (\Delta_2 u^n_{0,k})^2 \\
&- \, \dfrac{\alpha^2 \, (\alpha^2+\beta^2)}{2} \, \sum_{j \ge 1} \, (D_{1,+} D_{2,+} u^n_{j,0})^2 
\, - \, \dfrac{\beta^2 \, (\alpha^2+\beta^2)}{2} \, \sum_{k \ge 1} \, (D_{1,+} D_{2,+} u^n_{0,k})^2 \\
&+ \, 2 \, \alpha^3 \, \beta \, \sum_{j \ge 1} \, (\Delta_1 u^n_{j,0}) \, D_{2,+} D_{1,0} u^n_{j,0} 
\, + \, 2 \, \alpha \, \beta^3 \, \sum_{k \ge 1} \, (\Delta_2 u^n_{0,k}) \, D_{1,+} D_{2,0} u^n_{0,k} \\
&- \, \dfrac{3}{2} \, \alpha \, \beta \, (\alpha^2+\beta^2) \, (D_{1,+} D_{2,+} u^n_{0,0})^2 
\, - \, \dfrac{(\alpha^2+\beta^2)^2}{4} \, (D_{1,+} D_{2,+} u^n_{0,0})^2 \, .
\end{align*}
We then combine this estimate of the interior norm with the estimate \eqref{lem3-relation1} for the boundary and corner terms, and we 
therefore obtain the estimate \eqref{lem3-sym2} of Lemma \ref{lem3} (recalling that both $\alpha$ and $\beta$ are negative).
\end{proof}

\subsection{Proof of the main result}

We go back to the decomposition \eqref{bilan-energie} and then combine Lemma \ref{lem1}, Lemma \ref{lem2} and Lemma \ref{lem3}. We 
obtain the energy inequality:
\begin{equation}
\label{decomp-final}
\| \, u^{n+1} \, \|^2 \, - \, \| \, u^n \, \|^2 \, \le \, \mathcal{I} \, + \, \mathcal{B}_1  \, + \, \mathcal{B}_2  \, + \, \mathcal{C} \, , 
\end{equation}
where $\mathcal{I}$ incorporates the interior contributions, namely:
\begin{multline}
\label{defbilaninterieur}
\mathcal{I} \, := \, \Big( - \, 1 \, + \, 2 \, (\alpha^2+\beta^2) \Big) \, \left\{ \dfrac{\alpha^2}{4} \, \| \Delta_1 u^n \|_o^2 
\, + \, \dfrac{\beta^2}{4} \, \| \Delta_2 u^n \|_o^2 \, + \, \dfrac{\alpha^2+\beta^2}{16} \, \Big( \| D_{1,-} D_{2,-} u^n \|_o^2 \right. \\
\left. \, + \, \| D_{1,-} D_{2,+} u^n \|_o^2 \, + \, \| D_{1,+} D_{2,-} u^n \|_o^2 \, + \, \| D_{1,+} D_{2,+} u^n \|_o^2 \Big) \right\} \, ,
\end{multline}
the term $\mathcal{B}_1$, resp. $\mathcal{B}_2$, incorporates all the contributions on the boundary $\{ k=0, j \ge 1 \}$, resp. $\{ j=0, k \ge 1 \}$, 
namely\footnote{We only give the definition for the term $\mathcal{B}_1$ and leave the analogous definition for $\mathcal{B}_2$ to the interested 
reader.}:
\begin{align}
\mathcal{B}_1 \, :=& \, - \, |\beta| \, \sum_{j \ge 1} \, (u_{j,0}^n)^2 \, - \, \dfrac{|\beta|^3}{2} \, \sum_{j \ge 1} (D_{2,+} u_{j,0}^n)^2 
\, - \, \dfrac{\alpha^2 \, |\beta|}{2} \, \sum_{j \ge 1} (D_{1,+} u_{j,0}^n)^2 
\, - \, \dfrac{\alpha^2 \, (1-|\beta|)^2}{8} \, \sum_{j \ge 1} (\Delta_1 u_{j,0}^n)^2 \notag \\
& \, - \, \beta^2 \, \sum_{j \ge 1} u_{j,0}^n \, D_{2,+} u_{j,0}^n \, - \, |\alpha| \, \beta^2 \, \sum_{j \ge 1} D_{1,0} u_{j,0}^n \, D_{2,+} u_{j,0}^n 
\, - \, \dfrac{\alpha^2 \, \beta^2}{8} \, \sum_{j \ge 1} (D_{2,+} \Delta_1 u_{j,0}^n)^2 \notag \\
& \, + \, |\alpha| \, \dfrac{(\alpha^2+\beta^2)}{4} \, \sum_{j \ge 1} D_{1,0} u_{j,0}^n \, D_{2,+} \Delta_1 u_{j,0}^n 
\, - \, \dfrac{(1+|\beta|-\beta^2) \, (\alpha^2+\beta^2)}{8} \, \sum_{j \ge 1} (D_{1,+} D_{2,+} u_{j,0}^n)^2 \label{defbilanbord} \\
& \, - \, \dfrac{(\alpha^2+\beta^2)}{4} \, \sum_{j \ge 1} D_{1,+} u_{j,0}^n \, D_{1,+} D_{2,+} u_{j,0}^n 
\, - \, \dfrac{\alpha^2 \, (\alpha^2 + \beta^2)}{8} \, \sum_{j \ge 1} (\Delta_1 u_{j,0}^n) \, D_{2,+} \Delta_1 u_{j,0}^n \notag \\
& \, + \, |\alpha|^3 \, |\beta| \, \sum_{j \ge 1} (\Delta_1 u_{j,0}^n) \, D_{2,+} D_{1,0} u_{j,0}^n \, ,\notag
\end{align}
and the term $\mathcal{C}$ incorporates the corner contributions, namely:
\begin{align}
\mathcal{C} \, :=& \, \left( |\alpha| \, |\beta| \, - \, \dfrac{|\alpha|+|\beta|}{2} \right) \, (u_{0,0}^n)^2 \notag \\
& - \, \left( \dfrac{|\alpha|^3}{4} \, + \, \dfrac{\alpha^2 \, |\beta|}{2} \right) \, (D_{1,+} u_{0,0}^n)^2 
\, - \, \left( \dfrac{|\beta|^3}{4} \, + \, \dfrac{|\alpha| \, \beta^2}{2} \right) \, (D_{2,+} u_{0,0}^n)^2 \notag \\
& \, - \, \dfrac{\alpha^2}{2} \, u_{0,0}^n \, D_{1,+} u_{0,0}^n \, - \, \dfrac{\beta^2}{2} \, u_{0,0}^n \, D_{2,+} u_{0,0}^n 
\, - \, \dfrac{|\alpha \, \beta|}{2} \, (|\alpha|+|\beta|) \, D_{1,+} u_{0,0}^n \, D_{2,+} u_{0,0}^n \label{defbilancoin} \\
& \, - \, \dfrac{(\alpha^2+\beta^2)}{4} \, \Big( u_{0,0}^n + D_{1,+} u_{0,0}^n + D_{2,+} u_{0,0}^n \Big) \, D_{1,+} D_{2,+} u_{0,0}^n 
\, - \, \dfrac{3 \, (\alpha^2+\beta^2)}{16} \, (D_{1,+} D_{2,+} u_{0,0}^n)^2 \, ,\notag \\
&\, - \, (|\alpha|+|\beta|) \, \dfrac{(\alpha^2+\beta^2)}{8} \, (D_{1,+} D_{2,+} u_{0,0}^n)^2 
\, - \, \dfrac{(\alpha^2 + \beta^2)^2}{16} \, (D_{1,+} D_{2,+} u_{0,0}^n)^2 \, .\notag
\end{align}
\bigskip

Let us split the analysis below in three steps, which correspond to the interior, corner and boundary contributions. The ordering corresponds 
to an increasing level of difficulty. There is in the end an easy concluding argument.
\bigskip

\underline{Step 1 (the interior contribution).} We first deal with the term $\mathcal{I}$ defined in \eqref{defbilaninterieur}. We first choose 
the parameter $\varepsilon:=1/4$ and assume that $(\alpha,\beta)=(\lambda \, a,\mu \, b)$ satisfy:
\begin{equation}
\label{restrictionCFL}
\alpha^2 \, + \, \beta^2 \, \le \, \varepsilon \, .
\end{equation}
We see from the defining equation \eqref{defbilaninterieur} that we have:
\begin{equation}
\label{estim-bilan-int}
\mathcal{I} \, \le \, - \,  \dfrac{\alpha^2}{8} \, \| \Delta_1 u^n \|_o^2 \, - \, \dfrac{\beta^2}{8} \, \| \Delta_2 u^n \|_o^2 \, ,
\end{equation}
where we have not kept all non-positive contributions on the right-hand side of \eqref{defbilaninterieur} but only the two most simple ones. 
It would be possible to keep more contributions but, in our opinion, that would not change significantly the main result of this article since 
the main feature of the dissipation estimate for the Lax-Wendroff scheme is a fourth order dissipation with respect to both spatial directions.

We shall allow ourselves in what follows to further decrease the value of $\varepsilon$ and will always assume that \eqref{restrictionCFL} 
holds.
\bigskip

\underline{Step 2 (the corner contribution).} We consider the quantity $\mathcal{C}$ defined in \eqref{defbilancoin}. We first try to absorb 
some of the cross terms. We use Young's inequality to estimate the fourth line in the right-hand side of \eqref{defbilancoin}:
\begin{multline*}
\dfrac{(\alpha^2+\beta^2)}{4} \, \Big| u_{0,0}^n + D_{1,+} u_{0,0}^n + D_{2,+} u_{0,0}^n \Big| \, \Big| D_{1,+} D_{2,+} u_{0,0}^n \Big| \\
\le \, \dfrac{(\alpha^2+\beta^2)}{4} \, \Big( (u_{0,0}^n)^2 \, + \, (D_{1,+} u_{0,0}^n)^2 \, + \, (D_{2,+} u_{0,0}^n)^2 \Big) 
\, + \, \dfrac{3 \, (\alpha^2+\beta^2)}{16} \, (D_{1,+} D_{2,+} u_{0,0}^n)^2 \, .
\end{multline*}
Estimating $|\alpha| \, |\beta| \le (\alpha^2+\beta^2)/2$ in the coefficient of $(u_{0,0}^n)^2$ in the first line of \eqref{defbilancoin}, we thus obtain 
the first estimate:
\begin{align}
\mathcal{C} \, \le & \, \left( \dfrac{3 \, (\alpha^2+\beta^2)}{4} \, - \, \dfrac{|\alpha|+|\beta|}{2} \right) \, (u_{0,0}^n)^2 \notag \\
& + \, \left( \dfrac{(\alpha^2+\beta^2)}{4} \, - \, \dfrac{|\alpha|^3}{4} \, - \, \dfrac{\alpha^2 \, |\beta|}{2} \right) \, (D_{1,+} u_{0,0}^n)^2 
\, + \, \left( \dfrac{(\alpha^2+\beta^2)}{4} \, - \, \dfrac{|\beta|^3}{4} \, - \, \dfrac{|\alpha| \, \beta^2}{2} \right) \, (D_{2,+} u_{0,0}^n)^2 \notag \\
& \, - \, \dfrac{\alpha^2}{2} \, u_{0,0}^n \, D_{1,+} u_{0,0}^n \, - \, \dfrac{\beta^2}{2} \, u_{0,0}^n \, D_{2,+} u_{0,0}^n 
\, - \, \dfrac{|\alpha \, \beta|}{2} \, (|\alpha|+|\beta|) \, D_{1,+} u_{0,0}^n \, D_{2,+} u_{0,0}^n \label{bilancoin1} \\
&\, - \, (|\alpha|+|\beta|) \, \dfrac{(\alpha^2+\beta^2)}{8} \, (D_{1,+} D_{2,+} u_{0,0}^n)^2 
\, - \, \dfrac{(\alpha^2 + \beta^2)^2}{16} \, (D_{1,+} D_{2,+} u_{0,0}^n)^2 \, .\notag
\end{align}
We keep on estimating some cross terms and now deal with the product $D_{1,+} u_{0,0}^n \, D_{2,+} u_{0,0}^n$ in the third line of the 
right-hand side of \eqref{bilancoin1}. We use again Young's inequality to obtain:
\begin{align*}
\dfrac{\alpha^2 \, |\beta|}{2} \, \left| D_{1,+} u_{0,0}^n \right| \, \left| D_{2,+} u_{0,0}^n \right| \, &\le \, 
\dfrac{|\alpha|^3}{4} \, (D_{1,+} u_{0,0}^n)^2 \, + \, \dfrac{|\alpha| \, \beta^2}{4} \, (D_{2,+} u_{0,0}^n)^2 \, , \\
\dfrac{|\alpha| \, \beta^2}{2} \, \left| D_{1,+} u_{0,0}^n \right| \, \left| D_{2,+} u_{0,0}^n \right| \, &\le \, 
\dfrac{\alpha^2 \, |\beta|}{4} \, (D_{1,+} u_{0,0}^n)^2 \, + \, \dfrac{|\beta|^3}{4} \, (D_{2,+} u_{0,0}^n)^2 \, .
\end{align*}
This gives our second estimate for the corner contribution $\mathcal{C}$:
\begin{align}
\mathcal{C} \, \le & \, \left( \dfrac{3 \, (\alpha^2+\beta^2)}{4} \, - \, \dfrac{|\alpha|+|\beta|}{2} \right) \, (u_{0,0}^n)^2 
\, - \, \dfrac{\alpha^2}{2} \, u_{0,0}^n \, D_{1,+} u_{0,0}^n \, - \, \dfrac{\beta^2}{2} \, u_{0,0}^n \, D_{2,+} u_{0,0}^n \notag \\
& + \, \left( \dfrac{(\alpha^2+\beta^2)}{4} \, - \, \dfrac{\alpha^2 \, |\beta|}{4} \right) \, (D_{1,+} u_{0,0}^n)^2 
\, + \, \left( \dfrac{(\alpha^2+\beta^2)}{4} \, - \, \dfrac{|\alpha| \, \beta^2}{4} \right) \, (D_{2,+} u_{0,0}^n)^2 \label{bilancoin2} \\
&\, - \, (|\alpha|+|\beta|) \, \dfrac{(\alpha^2+\beta^2)}{8} \, (D_{1,+} D_{2,+} u_{0,0}^n)^2 
\, - \, \dfrac{(\alpha^2 + \beta^2)^2}{16} \, (D_{1,+} D_{2,+} u_{0,0}^n)^2 \, .\notag
\end{align}
The very last term on the right-hand side of \eqref{bilancoin2} has the good sign but will not help in the analysis below. We thus feel 
free to discard this last term.

Unlike what we did in \cite{BC1}, we estimate the cross terms $u_{0,0}^n \, D_{1,+} u_{0,0}^n$ and $u_{0,0}^n \, D_{2,+} u_{0,0}^n$ 
rather crudely since there are already ``bad'' terms of the form $(u_{0,0}^n)^2$ but more importantly there are ``bad'' terms of the form 
$(D_{1,+} u_{0,0}^n)^2$ and $(D_{2,+} u_{0,0}^n)^2$ that cannot be absorbed, even by taking $(\alpha,\beta)$ small enough. We have:
\begin{align*}
\dfrac{\alpha^2}{2} \, \left| u_{0,0}^n \right| \, \left| D_{1,+} u_{0,0}^n \right| \, &\le \, 
\dfrac{\alpha^2}{4} \, (u_{0,0}^n)^2 \, + \, \dfrac{\alpha^2}{4} \, (D_{1,+} u_{0,0}^n)^2 \, , \\
\dfrac{\beta^2}{2} \, \left| u_{0,0}^n \right| \, \left| D_{2,+} u_{0,0}^n \right| \, &\le \, 
\dfrac{\beta^2}{4} \, (u_{0,0}^n)^2 \, + \, \dfrac{\beta^2}{4} \, (D_{2,+} u_{0,0}^n)^2 \, ,
\end{align*}
and we thus get from \eqref{bilancoin2} the estimate:
\begin{align*}
\mathcal{C} \, \le & \, \left( \alpha^2+\beta^2 \, - \, \dfrac{|\alpha|+|\beta|}{2} \right) \, (u_{0,0}^n)^2 
\, - \, \dfrac{\alpha^2 \, |\beta|}{4} \, (D_{1,+} u_{0,0}^n)^2  \, - \, \dfrac{|\alpha| \, \beta^2}{4} \, (D_{2,+} u_{0,0}^n)^2 \\
& + \, \dfrac{(2 \, \alpha^2+\beta^2)}{4} \, (D_{1,+} u_{0,0}^n)^2 \, + \, \dfrac{(\alpha^2+2 \, \beta^2)}{4} \, (D_{2,+} u_{0,0}^n)^2 
\, - \, (|\alpha|+|\beta|) \, \dfrac{(\alpha^2+\beta^2)}{8} \, (D_{1,+} D_{2,+} u_{0,0}^n)^2 \, .
\end{align*}
Up to further restricting the parameters $(\alpha,\beta)$ by choosing a smaller value for $\varepsilon$, the corner contribution $\mathcal{C}$ 
satisfies the estimate:
\begin{align}
\mathcal{C} \, \le & \, - \dfrac{|\alpha|+|\beta|}{4} \, (u_{0,0}^n)^2 
\, - \, \dfrac{\alpha^2 \, |\beta|}{4} \, (D_{1,+} u_{0,0}^n)^2  \, - \, \dfrac{|\alpha| \, \beta^2}{4} \, (D_{2,+} u_{0,0}^n)^2 
\, - \, (|\alpha|+|\beta|) \, \dfrac{(\alpha^2+\beta^2)}{8} \, (D_{1,+} D_{2,+} u_{0,0}^n)^2 \notag \\
& + \dfrac{(\alpha^2+\beta^2)}{2} \, \Big( (D_{1,+} u_{0,0}^n)^2 \, + \, (D_{2,+} u_{0,0}^n)^2 \Big) \, ,\label{estim-bilan-corner}
\end{align}
where the first line on the right-hand side corresponds to a dissipative contribution and the only two terms with a bad sign 
are collected in the second line. We shall see in the concluding argument how to absorb these terms.
\bigskip

\underline{Step 3 (the boundary contribution).} We consider the quantity $\mathcal{B}_1$ defined in \eqref{defbilanbord} and absorb 
again some cross terms. The first cross term in the second line on the right-hand side of \eqref{defbilanbord} is estimated by Young's 
inequality:
$$
\beta^2 \, \left| \sum_{j \ge 1} u_{j,0}^n \, D_{2,+} u_{j,0}^n \right| \, \le \, 
\dfrac{3 \, |\beta|}{4} \, \sum_{j \ge 1} (u_{j,0}^n)^2 \, + \, \dfrac{|\beta|^3}{3} \, \sum_{j \ge 1} (D_{2,+} u_{j,0}^n)^2 \, .
$$
Moreover, in the third line on the right-hand side of \eqref{defbilanbord}, we use the inequality $|\beta| \ge \beta^2$ to simplify the 
term that involves the sum of the $(D_{1,+} D_{2,+} u_{j,0}^n)^2$. At last, in the first line on the right-hand side of \eqref{defbilanbord}, 
we use the inequality $(1-|\beta|)^2 \ge 1/2$ that holds as long as the parameter $\varepsilon$ in \eqref{restrictionCFL} is chosen small 
enough. We thus get our first estimate:
\begin{align}
\mathcal{B}_1 \, \le& \, - \, \dfrac{|\beta|}{4} \, \sum_{j \ge 1} \, (u_{j,0}^n)^2 \, - \, \dfrac{|\beta|^3}{6} \, \sum_{j \ge 1} (D_{2,+} u_{j,0}^n)^2 
\, - \, \dfrac{\alpha^2 \, |\beta|}{2} \, \sum_{j \ge 1} (D_{1,+} u_{j,0}^n)^2 \, - \, \dfrac{\alpha^2}{16} \, \sum_{j \ge 1} (\Delta_1 u_{j,0}^n)^2 \notag \\
& \, - \, \dfrac{\alpha^2 \, \beta^2}{8} \, \sum_{j \ge 1} (D_{2,+} \Delta_1 u_{j,0}^n)^2 
\, - \, \dfrac{(\alpha^2+\beta^2)}{8} \, \sum_{j \ge 1} (D_{1,+} D_{2,+} u_{j,0}^n)^2
\, - \, |\alpha| \, \beta^2 \, \sum_{j \ge 1} D_{1,0} u_{j,0}^n \, D_{2,+} u_{j,0}^n \notag \\
& \, - \, \dfrac{(\alpha^2+\beta^2)}{4} \, \sum_{j \ge 1} D_{1,+} u_{j,0}^n \, D_{1,+} D_{2,+} u_{j,0}^n 
\, + \, |\alpha| \, \dfrac{(\alpha^2+\beta^2)}{4} \, \sum_{j \ge 1} D_{1,0} u_{j,0}^n \, D_{2,+} \Delta_1 u_{j,0}^n \label{bilanbord1} \\
& \, - \, \dfrac{\alpha^2 \, (\alpha^2 + \beta^2)}{8} \, \sum_{j \ge 1} (\Delta_1 u_{j,0}^n) \, D_{2,+} \Delta_1 u_{j,0}^n 
\, + \, |\alpha|^3 \, |\beta| \, \sum_{j \ge 1} (\Delta_1 u_{j,0}^n) \, D_{2,+} D_{1,0} u_{j,0}^n \, .\notag
\end{align}

We use again Young's inequality for the first cross term in the third line on the right-hand side of \eqref{bilanbord1}:
$$
\dfrac{(\alpha^2+\beta^2)}{4} \, \left| \sum_{j \ge 1} D_{1,+} u_{j,0}^n \, D_{1,+} D_{2,+} u_{j,0}^n \right| \, \le \, 
\dfrac{(\alpha^2+\beta^2)}{4} \, \sum_{j \ge 1} (D_{1,+} u_{j,0}^n)^2 \, + \, 
\dfrac{(\alpha^2+\beta^2)}{16} \, \sum_{j \ge 1} (D_{1,+} D_{2,+} u_{j,0}^n)^2 \, ,
$$
and we now recall that the CFL parameters $\lambda,\mu$ are subject to the conditions \eqref{majorationCFL} where $M>0$ is a fixed 
constant. This means that the negative parameters $\alpha$ and $\beta$ satisfy $|\alpha| \le M \, |\beta|$ and $|\beta| \le M \, |\alpha|$ 
so we have:
\begin{multline*}
\dfrac{(\alpha^2+\beta^2)}{4} \, \left| \sum_{j \ge 1} D_{1,+} u_{j,0}^n \, D_{1,+} D_{2,+} u_{j,0}^n \right| \\
\le \, \dfrac{(1+M^2) \, |\beta|}{4} \, |\beta| \, \sum_{j \ge 1} (D_{1,+} u_{j,0}^n)^2 \, + \, 
\dfrac{(\alpha^2+\beta^2)}{16} \, \sum_{j \ge 1} (D_{1,+} D_{2,+} u_{j,0}^n)^2 \, .
\end{multline*}
Since we have $D_{1,+} u_{j,0}^n=u_{j+1,0}^n-u_{j,0}^n$, we get the (non-optimal !) estimate:
$$
\sum_{j \ge 1} (D_{1,+} u_{j,0}^n)^2 \, \le \, 4 \, \sum_{j \ge 1} (u_{j,0}^n)^2 \, ,
$$
and we thus see that choosing $\varepsilon$ in \eqref{restrictionCFL} possibly smaller (but the choice now depends on $M$ !), we get:
$$
\dfrac{(\alpha^2+\beta^2)}{4} \, \left| \sum_{j \ge 1} D_{1,+} u_{j,0}^n \, D_{1,+} D_{2,+} u_{j,0}^n \right| \, \le \, 
\dfrac{|\beta|}{12} \, \sum_{j \ge 1} (u_{j,0}^n)^2 \, + \, \dfrac{(\alpha^2+\beta^2)}{16} \, \sum_{j \ge 1} (D_{1,+} D_{2,+} u_{j,0}^n)^2 \, .
$$
Using this estimate in \eqref{bilanbord1}, we obtain:
\begin{align}
\mathcal{B}_1 \, \le& \, - \, \dfrac{|\beta|}{6} \, \sum_{j \ge 1} \, (u_{j,0}^n)^2 \, - \, \dfrac{|\beta|^3}{6} \, \sum_{j \ge 1} (D_{2,+} u_{j,0}^n)^2 
\, - \, \dfrac{\alpha^2 \, |\beta|}{2} \, \sum_{j \ge 1} (D_{1,+} u_{j,0}^n)^2 \, - \, \dfrac{\alpha^2}{16} \, \sum_{j \ge 1} (\Delta_1 u_{j,0}^n)^2 \notag \\
& \, - \, \dfrac{\alpha^2 \, \beta^2}{8} \, \sum_{j \ge 1} (D_{2,+} \Delta_1 u_{j,0}^n)^2 
\, - \, \dfrac{(\alpha^2+\beta^2)}{16} \, \sum_{j \ge 1} (D_{1,+} D_{2,+} u_{j,0}^n)^2 \notag \\
& \, - \, |\alpha| \, \beta^2 \, \sum_{j \ge 1} D_{1,0} u_{j,0}^n \, D_{2,+} u_{j,0}^n 
\, + \, |\alpha| \, \dfrac{(\alpha^2+\beta^2)}{4} \, \sum_{j \ge 1} D_{1,0} u_{j,0}^n \, D_{2,+} \Delta_1 u_{j,0}^n \label{bilanbord2} \\
& \, - \, \dfrac{\alpha^2 \, (\alpha^2 + \beta^2)}{8} \, \sum_{j \ge 1} (\Delta_1 u_{j,0}^n) \, D_{2,+} \Delta_1 u_{j,0}^n 
\, + \, |\alpha|^3 \, |\beta| \, \sum_{j \ge 1} (\Delta_1 u_{j,0}^n) \, D_{2,+} D_{1,0} u_{j,0}^n \, .\notag
\end{align}

We now deal with the first term in the third line on the right-hand side of \eqref{bilanbord2} and recall the relation $D_{1,0}=D_{1,+}
-\Delta_1/2$. We thus have:
$$
\, - \, |\alpha| \, \beta^2 \, \sum_{j \ge 1} D_{1,0} u_{j,0}^n \, D_{2,+} u_{j,0}^n \, = \, 
\, - \, |\alpha| \, \beta^2 \, \sum_{j \ge 1} D_{1,+} u_{j,0}^n \, D_{2,+} u_{j,0}^n \, + \, \dfrac{|\alpha| \, \beta^2}{2} \, 
\sum_{j \ge 1} \Delta_1 u_{j,0}^n \, D_{2,+} u_{j,0}^n \, .
$$
For the cross term with the product $D_{1,+} u_{j,0}^n \, D_{2,+} u_{j,0}^n$, we use again Young's inequality and \eqref{majorationCFL}:
\begin{align*}
|\alpha| \, \beta^2 \, \left| \sum_{j \ge 1} D_{1,+} u_{j,0}^n \, D_{2,+} u_{j,0}^n \right| \, &\le \, 
M \, |\beta|^3 \, \left| \sum_{j \ge 1} D_{1,+} u_{j,0}^n \, D_{2,+} u_{j,0}^n \right| \\
&\le \, \dfrac{|\beta|^3}{8} \, \sum_{j \ge 1} (D_{2,+} u_{j,0}^n)^2 \, + \, 2 \, M^2 \, |\beta|^3 \, \sum_{j \ge 1} (D_{1,+} u_{j,0}^n)^2 \\
&\le \, \dfrac{|\beta|^3}{8} \, \sum_{j \ge 1} (D_{2,+} u_{j,0}^n)^2 \, + \, 8 \, M^2 \, |\beta|^3 \, \sum_{j \ge 1} (u_{j,0}^n)^2 \, .
\end{align*}
We can therefore absorb this term on the right-hand side of \eqref{bilanbord2} by choosing $\varepsilon$ in \eqref{restrictionCFL} 
small enough. The argument for the cross term with the product $\Delta_1 u_{j,0}^n \, D_{2,+} u_{j,0}^n$ is similar and we leave it 
to the interested reader (it is important here to estimate $\beta$ in terms of $\alpha$, which is made possible by \eqref{majorationCFL}).

Using Young's inequality and possibly choosing $\varepsilon$ smaller in \eqref{restrictionCFL} (the choice still depends on $M$), 
we can also absorb the first cross term of the form $(\Delta_1 u_{j,0}^n) \, D_{2,+} \Delta_1 u_{j,0}^n$ in the fourth line of the right-hand 
side of \eqref{bilanbord2}. At this stage, by choosing $\varepsilon$ small enough and $(\alpha,\beta)$ that satisfy \eqref{restrictionCFL}, 
we can enforce the following estimate\footnote{The coefficients are not aimed to be optimal. Our main goal is to show here that all 
cross terms can be absorbed by choosing $\varepsilon$ small enough, but this requires to be able to estimate $\alpha$ by $\beta$ 
and $\beta$ by $\alpha$, as is made possible by \eqref{majorationCFL}.} for the boundary contribution $\mathcal{B}_1$:
\begin{align}
\mathcal{B}_1 \, \le& \, - \, \dfrac{|\beta|}{8} \, \sum_{j \ge 1} \, (u_{j,0}^n)^2 \, - \, \dfrac{|\beta|^3}{32} \, \sum_{j \ge 1} (D_{2,+} u_{j,0}^n)^2 
\, - \, \dfrac{\alpha^2 \, |\beta|}{2} \, \sum_{j \ge 1} (D_{1,+} u_{j,0}^n)^2 \, - \, \dfrac{\alpha^2}{32} \, \sum_{j \ge 1} (\Delta_1 u_{j,0}^n)^2 \notag \\
& \, - \, \dfrac{\alpha^2 \, \beta^2}{16} \, \sum_{j \ge 1} (D_{2,+} \Delta_1 u_{j,0}^n)^2 
\, - \, \dfrac{(\alpha^2+\beta^2)}{16} \, \sum_{j \ge 1} (D_{1,+} D_{2,+} u_{j,0}^n)^2 \label{bilanbord3} \\
& \, + \, |\alpha| \, \dfrac{(\alpha^2+\beta^2)}{4} \, \sum_{j \ge 1} D_{1,0} u_{j,0}^n \, D_{2,+} \Delta_1 u_{j,0}^n 
\, + \, |\alpha|^3 \, |\beta| \, \sum_{j \ge 1} (\Delta_1 u_{j,0}^n) \, D_{2,+} D_{1,0} u_{j,0}^n \, .\notag
\end{align}

For the very last two cross terms in the last line of the right-hand side of \eqref{bilanbord3}, we use the relation $D_{1,0}=D_{1,+}-\Delta_1/2$ 
in order to involve only the operators $D_{1,+}$, $D_{2,+}$ and $\Delta_1$. When we expand the second cross term, we have to deal with 
cross terms that involve the products:
$$
(\Delta_1 u_{j,0}^n) \, D_{1,+} D_{2,+} u_{j,0}^n \, \quad \quad (\Delta_1 u_{j,0}^n) \, D_{2,+} \Delta_1 u_{j,0}^n \, ,
$$
and for each of these two terms, we can apply the above argument that is based on Young's inequality (in order to absorb the ``worst'' 
square term) and choosing $\varepsilon$ sufficiently small (in order to absorb the remaining square term). The estimate \eqref{bilanbord3} 
thus yields, for instance, by choosing $\varepsilon$ small enough:
\begin{align}
\mathcal{B}_1 \, \le& \, - \, \dfrac{|\beta|}{8} \, \sum_{j \ge 1} \, (u_{j,0}^n)^2 \, - \, \dfrac{|\beta|^3}{32} \, \sum_{j \ge 1} (D_{2,+} u_{j,0}^n)^2 
\, - \, \dfrac{\alpha^2 \, |\beta|}{2} \, \sum_{j \ge 1} (D_{1,+} u_{j,0}^n)^2 \, - \, \dfrac{\alpha^2}{64} \, \sum_{j \ge 1} (\Delta_1 u_{j,0}^n)^2 \notag \\
& \, - \, \dfrac{\alpha^2 \, \beta^2}{32} \, \sum_{j \ge 1} (D_{2,+} \Delta_1 u_{j,0}^n)^2 
\, - \, \dfrac{(\alpha^2+\beta^2)}{32} \, \sum_{j \ge 1} (D_{1,+} D_{2,+} u_{j,0}^n)^2 \label{bilanbord4} \\
& \, + \, |\alpha| \, \dfrac{(\alpha^2+\beta^2)}{4} \, \sum_{j \ge 1} D_{1,+} u_{j,0}^n \, D_{2,+} \Delta_1 u_{j,0}^n 
\, - \, |\alpha| \, \dfrac{(\alpha^2+\beta^2)}{8} \, \sum_{j \ge 1} \Delta_1 u_{j,0}^n \, D_{2,+} \Delta_1 u_{j,0}^n \, .\notag
\end{align}

The analysis is almost complete. For the first cross term in the last line of the right-hand side of \eqref{bilanbord4}, we use the above argument 
to get:
$$
|\alpha| \, \dfrac{(\alpha^2+\beta^2)}{4} \, \left| \sum_{j \ge 1} D_{1,+} u_{j,0}^n \, D_{2,+} \Delta_1 u_{j,0}^n \right| \, \le \, 
\dfrac{\alpha^2 \, \beta^2}{64} \, \sum_{j \ge 1} (D_{2,+} \Delta_1 u_{j,0}^n)^2 \, + \, C(M) \, \beta^2 \, \sum_{j \ge 1} \, (u_{j,0}^n)^2 \, ,
$$
where the constant $C(M)$ only depends on $M$ (it is actually a polynomial quantity in $M$). The trouble comes with the very last term since 
the operator $\Delta_1$, unlike $D_{1,+}$ involves a symmetric stencil, and the control we have on $\Delta_1 u_{j,0}^n$ does not seem good 
enough at this stage. We shall therefore use the (non optimal) bound $\sum_{j\geq 1}(\Delta_1 \,u_{j,0}^n)^2\,\leq \,16\,\sum_{j\geq 0}(u^n_{j,0})^2$ 
to take advantage of the good control on $u_{j,0}^n$. Reproducing the above argument, we get the inequality:
$$
|\alpha| \, \dfrac{(\alpha^2+\beta^2)}{4} \, \left| \sum_{j \ge 1} \Delta_1 u_{j,0}^n \, D_{2,+} \Delta_1 u_{j,0}^n \right| \, \le \, 
\dfrac{\alpha^2 \, \beta^2}{64} \, \sum_{j \ge 1} (D_{2,+} \Delta_1 u_{j,0}^n)^2 \, + \, C(M) \, \beta^2 \, \sum_{j \ge 0} \, (u_{j,0}^n)^2 \, ,
$$
with a possibly larger constant $C(M)$ but the important thing is that the very last sum bears on the indices $\{ j \ge 0 \}$ and not only on 
$\{ j \ge 1 \}$.

As a conclusion for this third step, we have seen that we can choose $\varepsilon$ small enough (and the choice depends on $M$) such that, 
for CFL parameters that satisfy \eqref{restrictionCFL}, we get:
\begin{equation}
\label{estim-bilan-bord-1}
\mathcal{B}_1 \, \le \, C(M) \, \beta^2 \, (u_{0,0}^n)^2 \, - \, \dfrac{|\beta|}{9} \, \sum_{j \ge 1} \, (u_{j,0}^n)^2 \, .
\end{equation}
Here we forget about many of the nonpositive contributions in the right-hand side of \eqref{bilanbord4} in order to simplify the final estimate 
that is stated in Theorem \ref{thm1}. We recall that there is an analogous term $\mathcal{B}_2$ on the boundary $\{ j=0, k \ge 1 \}$ and this 
term satisfies the analogous estimate:
\begin{equation}
\label{estim-bilan-bord-2}
\mathcal{B}_2 \, \le \, C(M) \, \alpha^2 \, (u_{0,0}^n)^2 \, - \, \dfrac{|\alpha|}{9} \, \sum_{k \ge 1} \, (u_{0,k}^n)^2 \, .
\end{equation}
\bigskip

\underline{Conclusion.} From the estimate \eqref{decomp-final} and the three estimates \eqref{estim-bilan-int}, \eqref{estim-bilan-corner} 
(where we only keep the first dissipation term), \eqref{estim-bilan-bord-1} and \eqref{estim-bilan-bord-2}, we have:
\begin{multline*}
\| \, u^{n+1} \, \|^2 \, - \, \| \, u^n \, \|^2 \, + \,  \dfrac{\alpha^2}{8} \, \| \Delta_1 u^n \|_{\ell^2(\mathring{\mathbb{I}})}^2 
\, + \, \dfrac{\beta^2}{8} \, \| \Delta_2 u^n \|_{\ell^2(\mathring{\mathbb{I}})}^2 \\
\le \, C(M) \, (\alpha^2+\beta^2) \, (u_{0,0}^n)^2 \, + \, \dfrac{(\alpha^2+\beta^2)}{2} \, \Big( (D_{1,+} u_{0,0}^n)^2 \, + \, (D_{2,+} u_{0,0}^n)^2 \Big) 
\, - \, \dfrac{|\alpha|+|\beta|}{4} \, (u_{0,0}^n)^2 \\
\, - \, \dfrac{|\alpha|}{9} \, \sum_{k \ge 1} \, (u_{0,k}^n)^2 \, - \, \dfrac{|\beta|}{9} \, \sum_{j \ge 1} \, (u_{j,0}^n)^2 \, .
\end{multline*}
There are only three terms on the right-hand side with the ``wrong'' sign, but each of them can be absorbed by either one of the three terms 
with a negative sign since we have:
$$
 (D_{1,+} u_{0,0}^n)^2 \, + \, (D_{2,+} u_{0,0}^n)^2 \, \le \, 4 \, (u_{0,0}^n)^2 \, + \, 2 \, (u_{1,0}^n)^2 \, + \, 2 \, (u_{0,1}^n)^2 \, .
$$
Up to restricting again the parameter $\varepsilon$, we end up with the estimate:
$$
\| \, u^{n+1} \, \|^2 \, - \, \| \, u^n \, \|^2 \, + \, \dfrac{\alpha^2}{8} \, \| \Delta_1 u^n \|_{\ell^2(\mathring{\mathbb{I}})}^2 
\, + \, \dfrac{\beta^2}{8} \, \| \Delta_2 u^n \|_{\ell^2(\mathring{\mathbb{I}})}^2 
\, + \, \dfrac{|\alpha|}{10} \, \sum_{k \ge 0} \, (u_{0,k}^n)^2 \, + \, \dfrac{|\beta|}{10} \, \sum_{j \ge 0} \, (u_{j,0}^n)^2 \, \le \, 0 \, ,
$$
where we have incorporated the control of the corner value $u_{0,0}^n$ in the boundary sums for simplicity. The proof of Theorem \ref{thm1} 
is now complete.

\section{Discussion and numerical illustrations}
\label{part-num}

In this Section, we briefly discuss the result of Theorem \ref{thm1} and our strategy for its proof. If we compare with the analogous result 
in our previous article, that is \cite[Theorem 4.1]{BC1}, the main difference is that our stability estimate for the second order boundary and 
corner extrapolation does not cover the whole set of parameters $(\lambda \, a,\mu \, b)$ that satisfy the stability requirement for the Cauchy 
problem, namely the CFL condition:
\begin{equation}
\label{CFLcauchy}
(\lambda \, a)^2 \, + \, (\mu \, b)^2 \, \le \, \dfrac{1}{2} \, .
\end{equation}

In our final arguments, we have been rather crude in estimating the corner and boundary contributions in the decomposition \eqref{decomp-final}. 
Let us see whether there is a potential room for improvement. We first look at the corner contribution \eqref{defbilancoin}. The corner contribution 
$\mathcal{C}$ defined in \eqref{defbilancoin} is a quadratic form with respect to the vector $(u_{0,0}^n, D_{1,+} u_{0,0}^n, D_{2,+} u_{0,0}^n, 
D_{1,+} D_{2,+} u_{0,0}^n) \in \R^4$. Figure \ref{fig:corner-1} illustrates the set of parameters $(\lambda \, |a|,\mu \, |b|) \in [0,1]^2$ for which 
this quadratic form is negative definite (which is what we were aiming at in the proof of Theorem \ref{thm1}). The set of \emph{good} parameters, 
that is the parameters for which the quadratic form is definite negative and \eqref{CFLcauchy} holds, is depicted in yellow. The exterior of the 
ball \eqref{CFLcauchy} is depicted in dark blue. In the light blue region, \eqref{CFLcauchy} holds but the quadratic form is not negative definite. 
The latter region contains all small values of $\lambda \, |a|,\mu \, |b|$, which is reminiscent of our final estimate \eqref{estim-bilan-corner} and 
can be deduced by merely looking at the expression of $\mathcal{C}$ with either $a$ or $b$ equal to zero.

\begin{figure}[ht!]
  \centering
 \includegraphics[width=.4\textwidth]{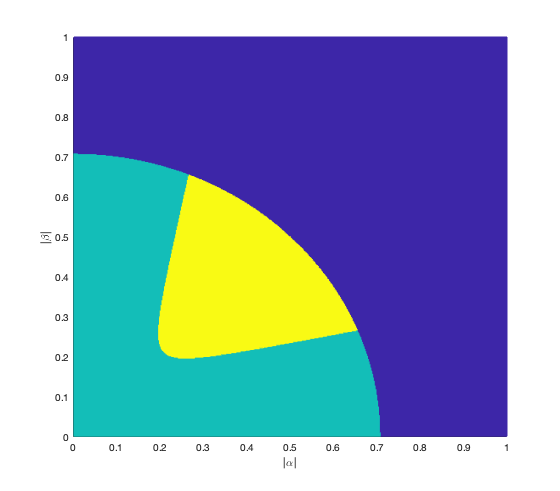}
  \caption{Negativity of the quadratic form associated with the corner contribution $\mathcal{C}$. In dark blue: the exterior of the ball. In yellow: 
  the parameters $(\lambda \, |a|,\mu \, |b|)$ for which \eqref{CFLcauchy} holds and the quadratic form is negative definite. In light blue: the 
  parameters $(\lambda \, |a|,\mu \, |b|)$ for which \eqref{CFLcauchy} holds and the quadratic form is not negative definite.}
  \label{fig:corner-1}
\end{figure}

The main problem in the analysis of the corner contribution $\mathcal{C}$ comes from the cross terms:
$$
D_{1,+} u_{0,0}^n \, D_{1,+} D_{2,+} u_{0,0}^n \, ,\quad \text{\rm and } \quad D_{2,+} u_{0,0}^n \, D_{1,+} D_{2,+} u_{0,0}^n \, .
$$
In our analysis, these terms are partly absorbed thanks to the boundary dissipation. If one now looks at the reduced corner contribution 
(where we omit the cross terms from which some difficulties arise, compare with \eqref{defbilancoin}):
\begin{align}
\widetilde{\mathcal{C}} \, :=& \, \left( |\alpha| \, |\beta| \, - \, \dfrac{|\alpha|+|\beta|}{2} \right) \, (u_{0,0}^n)^2 \notag \\
& - \, \left( \dfrac{|\alpha|^3}{4} \, + \, \dfrac{\alpha^2 \, |\beta|}{2} \right) \, (D_{1,+} u_{0,0}^n)^2 
\, - \, \left( \dfrac{|\beta|^3}{4} \, + \, \dfrac{|\alpha| \, \beta^2}{2} \right) \, (D_{2,+} u_{0,0}^n)^2 \notag \\
& \, - \, \dfrac{\alpha^2}{2} \, u_{0,0}^n \, D_{1,+} u_{0,0}^n \, - \, \dfrac{\beta^2}{2} \, u_{0,0}^n \, D_{2,+} u_{0,0}^n 
\, - \, \dfrac{|\alpha \, \beta|}{2} \, (|\alpha|+|\beta|) \, D_{1,+} u_{0,0}^n \, D_{2,+} u_{0,0}^n \label{defbilancoin'} \\
& \, - \, \dfrac{(\alpha^2+\beta^2)}{4} \, u_{0,0}^n \, D_{1,+} D_{2,+} u_{0,0}^n 
\, - \, \dfrac{3 \, (\alpha^2+\beta^2)}{16} \, (D_{1,+} D_{2,+} u_{0,0}^n)^2 \, ,\notag \\
&\, - \, (|\alpha|+|\beta|) \, \dfrac{(\alpha^2+\beta^2)}{8} \, (D_{1,+} D_{2,+} u_{0,0}^n)^2 
\, - \, \dfrac{(\alpha^2 + \beta^2)^2}{16} \, (D_{1,+} D_{2,+} u_{0,0}^n)^2 \, ,\notag
\end{align}
we can still try to identify numerically the region of parameters for which the associated quadratic form is negative definite. The result is 
shown in Figure \ref{fig:corner-2} with the same color scale as in Figure \ref{fig:corner-1}. The yellow region is far larger, which confirms 
that the above two cross terms are the core of the problem. Nevertheless, there remain very tiny portions near the axes, that is when 
either $\lambda \, |a|$ dominates $\mu \, |b|$ or the opposite, where the reduced quadratic form $\widetilde{\mathcal{C}}$ is not negative 
definite (this can also be seen by setting either $a$ or $b$ equal to zero). In full generality, it thus seems necessary to absorb part of the 
``bad'' terms in the corner contribution by part of the good terms in the boundary contributions.

\begin{figure}[ht!]
  \centering
 \includegraphics[width=.4\textwidth]{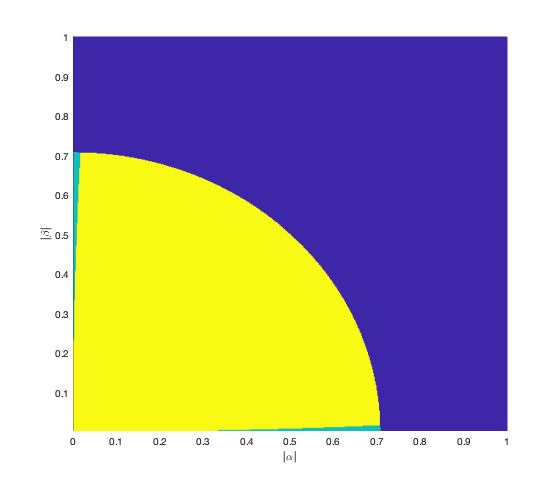}
  \caption{Negativity of the quadratic form associated with the reduced corner contribution $\widetilde{\mathcal{C}}$. In dark blue: the exterior 
  of the ball. In yellow: the parameters $(\lambda \, |a|,\mu \, |b|)$ for which \eqref{CFLcauchy} holds and the quadratic form is negative definite. 
  In light blue: the parameters $(\lambda \, |a|,\mu \, |b|)$ for which \eqref{CFLcauchy} holds and the quadratic form is not negative definite.}
  \label{fig:corner-2}
\end{figure}

We now examine the boundary contribution $\mathcal{B}_1$ in \eqref{defbilanbord}. Actually, we are going to simplify a little bit and consider 
the analogue of this term when extended to the whole set of integers $\Z$. In other words, we consider two sequences $u \in \ell^2(\Z;\R)$ 
($u$ being a placeholder for $u_{\cdot,0}^n$) and $v \in \ell^2(\Z;\R)$ ($v$ being a placeholder for the normal derivative $D_{2,+} u_{\cdot,0}^n$), 
and we consider the quantity:
\begin{align}
\widetilde{\mathcal{B}} \, :=& \, - \, |\beta| \, \sum_{j \in \Z} \, u_j^2 \, - \, \dfrac{|\beta|^3}{2} \, \sum_{j \in \Z} v_j^2 
\, - \, \dfrac{\alpha^2 \, |\beta|}{2} \, \sum_{j \in \Z} (D_{1,+} u_j)^2 \, - \, \dfrac{\alpha^2 \, (1-|\beta|)^2}{8} \, \sum_{j \in \Z} (\Delta_1 u_j)^2 \notag \\
& \, - \, \beta^2 \, \sum_{j \in \Z} u_j \, v_j \, - \, |\alpha| \, \beta^2 \, \sum_{j \in \Z} D_{1,0} u_j \, v_j 
\, - \, \dfrac{\alpha^2 \, \beta^2}{8} \, \sum_{j \in \Z} (\Delta_1 v_j)^2 \notag \\
& \, + \, |\alpha| \, \dfrac{(\alpha^2+\beta^2)}{4} \, \sum_{j \in \Z} D_{1,0} u_j \, \Delta_1 v_j 
\, - \, \dfrac{(1+|\beta|-\beta^2) \, (\alpha^2+\beta^2)}{8} \, \sum_{j \in \Z} (D_{1,+} v_j)^2 \label{defbilanbord'} \\
& \, - \, \dfrac{(\alpha^2+\beta^2)}{4} \, \sum_{j \in \Z} D_{1,+} u_j \, D_{1,+} v_j 
\, - \, \dfrac{\alpha^2 \, (\alpha^2 + \beta^2)}{8} \, \sum_{j \in \Z} \Delta_1 u_j \, \Delta_1 v_j 
\, + \, |\alpha|^3 \, |\beta| \, \sum_{j \in \Z} \Delta_1 u_j \, D_{1,0} v_j \, .\notag
\end{align}
After identifying the sequences $u$ and $v$ with square integrable piecewise constant functions on $\R$, we can apply Plancherel Theorem 
and obtain:
$$
\widetilde{\mathcal{B}} \, = \, \dfrac{1}{2 \, \pi} \, \int_\R \begin{pmatrix}
\hat{u}(\xi)\\
\hat{v}(\xi) \end{pmatrix}^* \, H(\xi) \, \begin{pmatrix}
\hat{u}(\xi)\\
\hat{v}(\xi) \end{pmatrix} \, {\rm d}\xi \, ,
$$
where the $2 \times 2$ Hermitian matrix $H(\xi)$ is defined by:
$$
H(\xi) \, := \, \begin{pmatrix}
-|\beta| (1+2\, \alpha^2 \, x) -2 \, \alpha^2 \, (1-|\beta|)^2 \, x^2 & c+i \, d \\
c-i \, d & -\dfrac{|\beta|^3}{2} -\dfrac{(1+|\beta|-\beta^2) \, (\alpha^2+\beta^2)}{2} \, x -2 \, \alpha^2 \, \beta^2 \, x^2 
\end{pmatrix} \, ,
$$
where we use the short notation $x:=\sin^2 (\xi/2)$, and the quantities $c$ and $d$ are defined by:
\begin{align*}
c \, := & \, - \, \dfrac{\beta^2}{2} \, - \, (\alpha^2+\beta^2) \, \dfrac{x}{2} \, - \, \alpha^2 \, (\alpha^2+\beta^2) \, x^2 \, ,\\
d \, := & \, \sin \xi \, \left( \dfrac{|\alpha| \, \beta^2}{2} \, + \, \dfrac{|\alpha| \, (\alpha^2+\beta^2)}{2} \, x -2 \, |\alpha|^3 \, |\beta| \, x \right) \, .
\end{align*}

If $H(\xi)$ is negative definite for any $\xi \in \R$, then we can get an upper bound for the boundary contribution $\widetilde{\mathcal{B}}$ 
with the ``good'' sign. The trace of $H(\xi)$ is easily shown to be negative for parameters $(\alpha,\beta)=(\lambda \, a,\mu \, b)$ that satisfy 
\eqref{CFLcauchy} and $\beta<0$. Thus $H(\xi)$ being negative definite is equivalent to showing that the determinant of $H(\xi)$ is positive 
for any $\xi \in \R$ (this determinant is shown to depend only on $x$ so there only remains a free parameter $x$ in the interval $[0,1]$). 
Figure \ref{fig:bord} shows the set of parameters $(\lambda \, |a|,\mu \, |b|)$ for which $H(\xi)$ is negative definite for any $\xi \in \R$ with 
the same color scale as in Figures \ref{fig:corner-1} and \ref{fig:corner-2}. The important point here is that for any parameters that satisfy 
the CFL condition \eqref{CFLcauchy}, the simplified boundary contribution $\widetilde{\mathcal{B}}$ does seem to provide some dissipation. 
However, we have not been able to derive the optimal scaling of this dissipation in terms of $\alpha$ and $\beta$ and it is therefore not clear 
whether, in a quarter-plane, the boundary terms may compensate for all the contributions at the corner that do not have the correct sign. 
This is left open for further studies.

\begin{figure}[ht!]
  \centering
 \includegraphics[width=.4\textwidth]{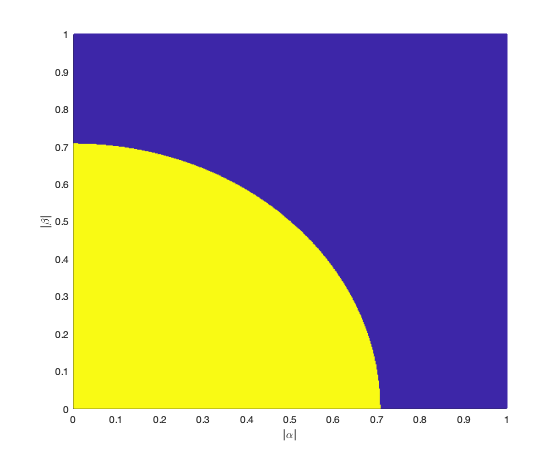}
  \caption{Negativity of the simplified boundary contribution $\widetilde{\mathcal{B}}$. In dark blue: the exterior 
  of the ball. In yellow: the parameters $(\lambda \, |a|,\mu \, |b|)$ for which \eqref{CFLcauchy} holds and the quadratic form is negative definite. 
  In light blue: the parameters $(\lambda \, |a|,\mu \, |b|)$ for which \eqref{CFLcauchy} holds and the quadratic form is not negative definite.}
  \label{fig:bord}
\end{figure}

\bibliographystyle{alpha}
\bibliography{BC}
\end{document}